\newtheorem{prob}{Problem}
\newtheorem{lem}{Lemma}[section]
\newtheorem{thm}{Theorem}[section]
\newtheorem{pro}{Proposition}[section]
\newtheorem{remark}{Remark}
\newtheorem{clm}{Claim}[section]
\theoremstyle{definition}
\begin{document}
\title{On the spectral radius of minimally 2-(edge)-connected graphs with given size\footnote{This work is supported
by National Natural Science Foundation of China (Nos. 12061074 and 11971274) and the China Postdoctoral Science Foundation
(No. 2019M661398).}}
\author{
{\small Zhenzhen Lou$^{a}$,\ \ Gao Min$^{a}$, \ \ Qiongxiang Huang$^{a}$\footnote{Corresponding author.
Email addresses: xjdxlzz@163.com, huangqxmath@163.com.}\;}\\[2mm]
\footnotesize $^a$College of Mathematics and Systems Science, Xinjiang University,
Urumqi, Xinjiang 830046, China
}

\date{}
\maketitle {\flushleft\large\bf Abstract}
A graph is minimally $k$-connected ($k$-edge-connected) if it is $k$-connected ($k$-edge-connected) and
deleting arbitrary chosen edge always leaves a graph which is not $k$-connected ($k$-edge-connected). 
A classic result of minimally $k$-connected graph is given by Mader who determined the extremal size of a minimally
$k$-connected graph of high order in 1937.
Naturally, for a fixed size of a minimally $k$-(edge)-connected graphs,
what is the extremal spectral radius?
In this paper,
we determine the maximum spectral radius for the  minimally $2$-connected ($2$-edge-connected) graphs of given size, moreover the corresponding
extremal graphs are also determined.

\vspace{0.1cm}
\begin{flushleft}
\textbf{Keywords:} Minimally 2-(edge)-connected graph; spectral radius; extremal graph; double eigenvectors
\end{flushleft}
\textbf{AMS Classification:} 05C50\ \ 05C75

\section{Introduction}
A graph is said to be \emph{connected} if for every pair of vertices there is a path joining them. Otherwise the graph is
disconnected.
The \emph{connectivity} (or \emph{vertex-connectivity}) $\kappa(G)$ of a graph $G$ is the minimum number of vertices whose
removal results in a disconnected graph or in the trivial graph.  The edge-connectivity $\kappa'(G)$ is defined analogously,
only instead of vertices we remove edges.
A graph is \emph{$k$-connected} if its connectivity is at least $k$ and $k$-edge-connected if its edge-connectivity is at least
$k$.
It is almost as simple to check that
the minimal degree $\delta(G)$, the edge-connectivity and vertex connectivity satisfy the following inequality:
$$\delta(G)\ge\kappa'(G)\ge\kappa(G).$$

One of the most important task for characterization of $k$-connected graphs is to give  certain operation such that
they can be produced from simple $k$-connected graphs by repeatedly  applying  this
operation\cite{BBE}. This goal  has accomplished by Tutte \cite{Tutte} for $3$-connected graphs, by Dirac \cite{GD} and Plummer
\cite{MP} for 2-connected graphs and by Slater \cite{Slater} for $4$-connected graphs.
A graph is said to be \emph{minimally $k$-connected}  if it is $k$-connected but omitting any of edges the resulting graph is no
longer $k$-connected.
Clearly, a $k$-connected graph whose every edge
is incident with  one vertex of degree $k$ is minimally $k$-connected, especially a $k$-regular and $k$-connected graph is minimally $k$-connected.

Questions in extremal graph theory ask to maximize or minimize a graph invariant
over a fixed family of graphs.
A classic result of minimally $k$-connected graph is given by Mader who determined the extremal size of a minimally
$k$-connected graph of high order in \cite{Mader}.
Giving a graph class $\mathcal{G}$ to study the bounds of spectral radius of graphs in $\mathcal{G}$ and to characterize the extremal graphs that achieves the bound is a famous problem in the spectral extremal  graph theory \cite{BH}, which attracts some authors and have produced many interesting results  published in various magazines \cite{Nosal,VN1,Lin,Tait,Zhai}.

In the origin of researches,  $\mathcal{G}$ is restricted to the graphs of  order $n$ or size $m$. For examples,
Brualdi and Hoffman \cite{BH}  gave an upper bound on spectral radius in terms of size $m$: if $m \le \binom{k}{2}$
for some integer $k \ge 1$ then $\rho(G) \le k -1$, with equality if and only if $G$ consists of
a $k$-clique and isolated vertices. Extending this result, Stanley \cite{Stanley-1} showed that
$\rho(G) \le\frac{\sqrt{1+8m}-1}{2}$. In particular, Nosal \cite{Nosal} in 1970 proved that if $G$ is a triangle-free graph with $m$ edges then $\rho(G) \le \sqrt{m}$.

In the subsequent study, $\mathcal{G}$ is restricted to the graphs that have some combinatorial structure. For examples,
in 2002, Nikiforov \cite{VN1,VN2} showed that $\rho(G) \le\sqrt{2m(1-\frac{1}{r})}$ for a graph $G$ with given size $m$.
Bollob\'{a}s, Lee, and Letzter 2018 studied the maximizing spectral radius of subgraphs
of the hypercube for giving size $m$ \cite{Bollobas}.
Very recently, Lin, Ning and
Wu \cite{Lin} proved that $\rho(G) \le \sqrt{m-1}$ when $G$ is non-bipartite and triangle-free with giving size $m$.
Zhai, Lin and Shu \cite{Zhai} obtained that
if  $G$ contains no pentagon or hexagon with giving size $m$, then $ \rho(G) \le \frac{1}{2}+\sqrt{m-\frac{3}{4}}$, with equality holds if and only if $G$ is a book graph.

In the recent works, some authors  restrict $\mathcal{G}$  to the graphs that have connectivity. For examples,
given order $n$ of a graph,  Chen and Guo in 2019  showed that $K_{2,n-2}$ attained the maximal spectral radius among all the  minimally
2-(edge)-connected graphs \cite{CXD}.
 Fan, Goryainov and Lin in 2021 proved that $K_{3,n-3}$  has the largest spectral radius over all minimally 3-connected graphs \cite{DDF}.

All the above studies indicate that the spectral radius of a graph are related with the parameters of graphs( such as order $n$ and size $m$ ), structure of graphs ( such as forbidding subgraphs )   and vertex or edge connectivity of graphs.

Motivated by this researches,  our paper is to study the spectral extremal problem  of minimally $2$-(edge)-connected graph
under edge-condition restrictions.
Precisely, our aim is to give an answer to the following question.
\begin{prob}\label{prob-1}
Given  size $m$,
what is the maximum spectral radius among minimally $k$-(edge)-connected graphs?
\end{prob}

Denote by $SK_{2,\frac{m-1}{2}}$ the graph obtained from the complete bipartite graph $K_{2, \frac{m-1}{2}}$
by subdividing an edge once.
Set $K_{2,\frac{m-3}{2}}*K_3$  is the graph obtained by identifying a maximum degree of $K_{2,\frac{m-3}{2}}$ and a vertex of $K_3$ from the disjoint union of
$K_{2,\frac{m-3}{2}}$ and $K_3$.
\emph{A friend graph}, denoted by $F_t$, is a graph obtained from $t$ triangles by sharing a common vertex.
In this paper, we  solve Problem \ref{prob-1} for $k=2$ and attain the unique extremal graph in the following two
theorems.

\begin{thm}\label{thm-1.1}
Let $G$ be a minimally 2-connected graph of size $m$. \\
$\mathrm{(i)}$ If $m$ is even, then $\rho(G)\le \sqrt{m}$,  the equality holds if and only if $G\cong K_{2,\frac m2}$.\\
$\mathrm{(ii)}$ If $m$ is odd and $m \ge 9$,   then $\rho(G)\le \rho_1^*(m)$, where $\rho_1^*(m)$ is the largest root of $x^3-x^2-(m-2)x+m-3=0$,  the equality holds if and only if $G\cong
SK_{2,\frac{m-1}{2}}$.
\end{thm}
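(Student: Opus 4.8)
The plan is to reduce everything to triangle-free graphs and then treat the two parities separately, with the odd case carrying all the difficulty.

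First I would invoke the classical theorem of Dirac and Plummer that a minimally $2$-connected graph on at least four vertices contains no triangle; since for $m\ge 9$ every minimally $2$-connected graph of size $m$ other than a short cycle has many vertices, we may assume $G$ is triangle-free. For part (i), Nosal's inequality (quoted above) gives $\rho(G)\le\sqrt m$ at once, and the equality case of Nosal's theorem forces $G$ to be complete bipartite. Among complete bipartite graphs only the graphs $K_{2,t}$ are minimally $2$-connected (for $s,t\ge 3$ one may delete an edge of $K_{s,t}$ and keep $2$-connectivity), and $K_{2,t}$ has even size $2t$; hence equality holds if and only if $G\cong K_{2,m/2}$.

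For part (ii) the value $\sqrt m$ is no longer attainable, and the whole task is to locate the true maximizer. Let $G$ maximize $\rho$ among minimally $2$-connected graphs of odd size $m$, let $x$ be its Perron vector, and let $u$ attain $x_u=\max_v x_v$. Using $A^2$ together with triangle-freeness (so adjacent vertices share no neighbour and $N(u)$ is independent), I would derive the local estimate
\[
\rho^2 x_u=d_u x_u+\sum_{\mathrm{dist}(u,w)=2}|N(u)\cap N(w)|\,x_w\le x_u\sum_{v\in N(u)}d_v ,
\]
and observe $\sum_{v\in N(u)}d_v=m-e\bigl(G-N[u]\bigr)$, where $N[u]=\{u\}\cup N(u)$ is the closed neighbourhood and $G-N[u]$ is the graph induced outside it. This recovers $\rho^2\le m$, and $\rho^2\le m-1$ whenever an edge survives outside $N[u]$; the point is that it is still too weak, because $\rho_1^*(m)^2\approx m-2$. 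So the crude inequality cannot separate $SK_{2,(m-1)/2}$ from the complete-bipartite profile, and a finer analysis is unavoidable.

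The core of the argument is therefore to prove the maximizer is exactly $SK_{2,(m-1)/2}$. Here I would use the ``double eigenvector'' comparison: for a local modification $G\to H$ on the same vertex set, with Perron vectors $x$ of $G$ and $y$ of $H$, one has $(\rho(H)-\rho(G))\,y^{\top}x=y^{\top}(A_H-A_G)x$, so an edge-switch replacing $\sum(y_ix_j+y_jx_i)$ by a heavier contribution strictly increases $\rho$. Combining this with structural facts about minimal $2$-connectivity (every cycle carries at least two vertices of degree $2$, and $G$ has no chorded cycle), I would argue that the extremal graph concentrates onto a dominating pair $\{a_1,a_2\}$ carrying almost all edges, every other vertex having degree $2$, which collapses $G$ into a $K_{2,t}$ plus at most one subdivided edge dictated by the parity of $m$; the bipartite alternative would be ruled out since a bipartite minimally $2$-connected graph of odd size cannot take the $K_{2,t}$ shape, forcing its Perron weight strictly below $\rho_1^*(m)$. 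I expect this switching/structural step to be the main obstacle, precisely because the inequalities above are blind to the $O(1/m)$ gap between $\sqrt{m-1}$ and $\rho_1^*(m)$, so the extremal graph must be identified exactly rather than merely estimated.

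Finally, once $G\cong SK_{2,(m-1)/2}$ is forced, I would compute $\rho$ from the equitable partition $\{a_1,a_2\}$, $\{w,b_1\}$, $\{b_2,\dots,b_{(m-1)/2}\}$, where $w$ subdivides the edge $a_1b_1$, whose quotient matrix is
\[
B=\begin{pmatrix} 0 & 1 & \tfrac{m-3}{2}\\ 1 & 1 & 0\\ 2 & 0 & 0\end{pmatrix}.
\]
Since the Perron vector is constant on the cells, $\rho(G)$ equals the spectral radius of $B$, whose characteristic polynomial is $x^3-x^2-(m-2)x+(m-3)$. Thus $\rho(G)=\rho_1^*(m)$, which closes part (ii).
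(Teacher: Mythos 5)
Your part (i) is correct and coincides with the paper's argument: triangle-freeness via Dirac's theorem, the Nosal--Nikiforov bound $\rho\le\sqrt m$ for triangle-free graphs, and the observation that among complete bipartite graphs only $K_{2,t}$ is minimally $2$-connected. Your closing computation in part (ii) is also right: the partition $\{a_1,a_2\}$, $\{w,b_1\}$, $\{b_2,\dots,b_{(m-1)/2}\}$ of $SK_{2,\frac{m-1}{2}}$ is equitable and your quotient matrix has characteristic polynomial $x^3-x^2-(m-2)x+(m-3)$, matching the paper's Lemma \ref{le-2.8}. The local estimate $\rho^2 x_u\le\bigl(m-e(B)\bigr)x_u$ with $B=V\setminus N[u]$ is exactly the paper's inequality (\ref{eq1}).

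The genuine gap is the entire middle of part (ii): your proposal announces the target structure (``the extremal graph concentrates onto a dominating pair \dots every other vertex having degree $2$'') but gives no argument that eliminates the competing configurations, and this elimination is the whole content of the paper's Section 3. Concretely: (a) the inequality you dismiss as ``too weak'' is in fact the first structural step --- combined with the lower bound $\rho(G^*)\ge\rho(SK_{2,\frac{m-1}{2}})>\sqrt{m-2}$ (which requires your quotient computation up front, as in Lemmas \ref{le-2.8} and \ref{le-4.1}) it yields $e(B)\le 1$; (b) one must rule out $d(u^*)=2$ (the paper's Claim \ref{claim-3.1}, by determining the graph completely in that case and computing); (c) one must rule out $e(B)=0$, where the surviving candidate is the graph $H$ of Fig.\ \ref{fig-2} with $\rho(H)=1+\sqrt{(m-1)/3}<\sqrt{m-2}$ (Claim \ref{claim-3.2}); (d) one must show $B$ induces a single edge with no isolated vertices (Claim \ref{claim-3.3}), which forces $G^*\cong H(s,t)$, and then a Perron-vector switching argument (the paper's Lemma \ref{le-2.9}) forces $s=1$ (Claim \ref{claim-3.4}). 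None of these steps is supplied, and your one stated reason --- that ``a bipartite minimally $2$-connected graph of odd size cannot take the $K_{2,t}$ shape'' --- cannot do the job: the competitor $H$ is bipartite and is killed only by the spectral computation in (c), while the competitors $H(s,t)$ with $s\ge2$ are \emph{not} bipartite (they contain $C_5$) and are killed only by the switching argument in (d); no parity observation separates either family from $SK_{2,\frac{m-1}{2}}$. So the proposal is a correct frame around the theorem, but the extremal characterization itself --- the part you yourself flag as ``the main obstacle'' --- is missing.
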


\begin{thm}\label{thm-1.2}
Let $G$ be a minimally 2-edge-connected graph of size $m$. \\
 $\mathrm{(i)}$ If $m$ is even, then  $\rho(G)\le \sqrt{m}$, the equality holds if and only if $G\cong K_{2,\frac m2}$.\\
 $\mathrm{(ii)}$ If  $m \ge 11$ is odd and $m \neq 15$, then  $\rho(G)\le \rho_2^*(m)$, where $\rho_2^*(m)$ is the largest root of $x^4-x^3+(1-m)x^2+(m-3)x+m-3=0$,  the equality holds if and only
 if $G \cong K_{2,\frac{m-3}{2}}*K_3$.
If $m=15$, then $\rho(G)\le \frac{1+\sqrt{41}}{2}$, the equality holds if and only if $G \cong F_5$.
\end{thm}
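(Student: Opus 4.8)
My plan is to pair a structural description of minimally $2$-edge-connected graphs with a Perron-eigenvector estimate, and then to turn the extremal problem into a short optimisation. The structural input I would use is the characterisation that a $2$-edge-connected graph is minimally $2$-edge-connected precisely when no cycle has a chord, i.e. every cycle is induced. From this I extract two purely local facts. For each vertex $u$ the subgraph induced on $N(u)$ has maximum degree at most $1$: a vertex $v\in N(u)$ adjacent to two others $w,w'\in N(u)$ would make $uv$ a chord of the $4$-cycle $uwvw'u$. Consequently the triangles through $u$ pairwise meet only in $u$ (the local picture is a friendship graph), and globally no two triangles share an edge. Writing $t_u$ for the number of triangles through $u$, the number of edges inside $N(u)$ equals exactly $t_u$.

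Let $x$ be the Perron vector, normalised by $\max_i x_i=x_u=1$. Since $\rho=\rho x_u=\sum_{v\sim u}x_v\le d(u)$, a maximum-entry vertex of degree $2$ would force $\rho\le 2$; as the extremal radius is of order $\sqrt m>2$, I may take $d(u)\ge 3$. Expanding $\rho^2 x_u$ over walks of length two gives the identity
\[
\rho^2=d(u)+\sum_{v\in N(u)}\ \sum_{w\in N(v)\setminus\{u\}}x_w .
\]
Bounding every $x_w\le 1$ and counting edges with the relation $e(N(u))=t_u$ from the previous paragraph, the right-hand side is at most $\sum_{v\in N(u)}d(v)=m+t_u-e(\overline{N[u]})\le m+t_u$, where $e(\overline{N[u]})$ is the number of edges outside the closed neighbourhood. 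Thus $\rho^2\le m+t_u$ for every minimally $2$-edge-connected graph; for $t_u=0$ this recovers Nosal's inequality $\rho\le\sqrt m$ and already disposes of all triangle-free graphs, which covers the bulk of the even case.

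The crude bound $\rho^2\le m+t_u$ allows triangles to help, which is correct for small $m$ (indeed $F_t$ can beat $\sqrt m$ there) but must be reversed for large $m$. To do so I would refine the estimate on a maximiser. The key reduction is to show that in an extremal graph every neighbour of the dominant vertex $u$ has degree $2$; I expect to prove this by an edge-reallocation argument that strictly increases $\rho$ while preserving minimal $2$-edge-connectivity whenever a neighbour of $u$ has degree at least $3$. Once neighbours have degree $2$, each triangle vertex $w$ satisfies $\rho x_w=1+x_{w'}\le 2$, so the triangle contribution to the identity is at most $4t_u/\rho$ instead of $2t_u$; combined with $2d(u)\le m+t_u$ this yields
\[
\rho^2\le m-t_u\left(1-\tfrac{4}{\rho}\right).
\]
Hence for $\rho\ge 4$ triangles are a strict penalty, and the maximiser reduces to the family obtained by gluing $K_{2,s}$ and the friendship graph $F_t$ at a common hub, with $m=2s+3t$; I would also rule out longer hub-to-hub connections (which would produce the subdivided graph $SK_{2,\cdot}$) by the same monotonicity.

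It then remains to optimise $\rho$ over this family with $m=2s+3t$ fixed. For even $m$ parity forces $t$ even, the refined estimate makes $t=0$ optimal once $\rho\ge 4$, and the remaining small even $m$ are treated by direct inspection; uniqueness follows from the equality case of Nosal's bound (a triangle-free graph with $\rho=\sqrt m$ is complete bipartite, which inside our class is exactly $K_{2,m/2}$). For odd $m$ parity forces $t$ odd, hence $t\ge 1$. Here I would compare the relevant spectral radii directly: the value $\rho(F_t)=\tfrac{1+\sqrt{8t+1}}{2}$, the largest root $\rho_2^*(m)$ of $x^4-x^3+(1-m)x^2+(m-3)x+m-3$ attached to $t=1$ (that is $K_{2,\frac{m-3}{2}}*K_3$), and the cubic root attached to $SK_{2,\cdot}$. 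The two curves cross exactly once, which singles out $m=15$, where $F_5$ with $\rho=\tfrac{1+\sqrt{41}}{2}$ overtakes $K_{2,6}*K_3$, while $t=1$ dominates for every other odd $m\ge 11$. The two steps I expect to be genuinely hard are this sharp, non-soft comparison of the quartic root with $\rho(F_t)$ (isolating the single crossover and excluding $t=3,5,\dots$ elsewhere), and—more seriously—the reduction of an arbitrary maximiser to the hub family: because adjacent high-degree vertices do occur in minimally $2$-edge-connected graphs, they cannot be excluded structurally and must instead be defeated by exhibiting class-preserving, $\rho$-increasing edge moves.
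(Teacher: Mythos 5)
Your skeleton points at the right extremal family, and several ingredients are sound (the chordless-cycle local analysis, the identity $\rho^2=d(u)+\sum_{v\sim u}\sum_{w\in N(v)\setminus\{u\}}x_w$, the parity dichotomy, and the $m=15$ exception). But there is a genuine gap exactly where you flag it, and it is not a technicality: the reduction of an arbitrary maximiser to the hub family is missing, and the lemma you propose to prove it with --- ``in an extremal graph every neighbour of the dominant vertex has degree $2$'' --- would not even suffice if proved. Concretely, the graphs $F_1(t_1,t_2,0)$ of Fig.~\ref{fig-5} (a pendant $C_5$ through the hub; this includes $SK_{2,\frac{m-1}{2}}\cong F_1(0,\frac{m-5}{2},0)$) and $F_0(t_1,t_2,1)$ (a $C_4$ attached to the hub at one of its degree-two vertices) are minimally $2$-edge-connected, satisfy all of your local constraints, and have \emph{every} hub-neighbour of degree $2$, yet lie outside the family ``friendship graph plus one $K_{2,s}$ glued at the hub.'' So after your intended degree reduction the problem does not collapse to that family; these competitors must be beaten spectrally, and your refined bound $\rho^2\le m-t_u\bigl(1-\tfrac{4}{\rho}\bigr)$ says nothing about them (for $SK_{2,\frac{m-1}{2}}$ one has $t_u=0$, so it only gives $\rho^2\le m$, useless against $\rho_2^*(m)^2\approx m-1$). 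Moreover, the standard rewiring lemmas (Lemma \ref{le-2.9}, Lemma \ref{7}) cannot be invoked against them, since the required inequalities between Perron entries of a single graph are not available; this is precisely why the paper proves Propositions \ref{claim-4.9} and \ref{claim-4.9'} by a two-eigenvector computation --- expanding $(\rho'-\rho)Y^TZ$ with $Y$ a Perron vector of $G$ and $Z$ one of the rewired graph $G'$, and using symmetry to annihilate the uncontrollable terms --- on top of the chain of structural claims (Claims \ref{claim-4.1}--\ref{claim-4.4}) bounding $e(B)\le1$ and limiting the attachments to $p\le1$, $q\le1$. Some argument of this strength is indispensable; without it the odd case (which needs the exact extremal structure, not an upper bound) does not close, and the even-case uniqueness claim inherits the same hole.

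Two further remarks. The even case does not need any of this machinery: the paper decomposes $G$ into blocks, each of which is minimally $2$-connected (Lemmas \ref{le-2.11} and \ref{le-2.10}) and hence triangle-free (Lemma \ref{le-2.1}), applies Nosal's bound per block, and uses the gluing bound of Lemma \ref{le-2.12} to get strict inequality when there are at least two blocks; adopting this decouples part (i) from the hard reduction. Finally, your endgame compares only two ``curves,'' $\rho(F_{m/3})$ versus $\rho_2^*(m)$, plus the cubic root of $SK_{2,\cdot}$; for each odd $m$ you must also dominate every intermediate choice $t=3,5,\dots$ with $s\ge2$ and $2s+3t=m$. The paper does this uniformly in Claim \ref{claim-4.8} via the factorization $f_1(x)-f_2(x)=(t_1-1)\bigl(x^2-x+m-3(t_1+1)\bigr)$, which is positive for $x>\sqrt{m-2}$; a single pairwise crossing argument leaves those cases open.
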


A famous sharp lower bound of spectral radius $\rho(G)\ge\frac{2m}{n}$ giving by L. Collatz, U. Sinogowitz in \cite{Collatz}, equality holds if and only if $G$ is a regular graph. Thus, the $m$-cycle has the minimal spectral radius among all minimally 2-(edge)-connected graphs with size $m$.
Therefore, by Theorems \ref{thm-1.1} and \ref{thm-1.2},  we obtain that
the  spectral radius of a minimally 2-(edge)-connected graph lies the interval $[2, \sqrt{m}]$.
It means that  a graph whose  spectral radius out of  $[2, \sqrt{m}]$ will not be  minimally 2-(edge)-connected which indeed indicates the relationship of spectral radius and connectivity for a graph.

 The rest of the paper is organized as follows.
In the next section, we will give some lemmas and some  properties of a minimally 2-(edge)-connected graph. In Section 3 and 4,
we will give the proof of Theorem \ref{thm-1.1} and   \ref{thm-1.2}, respectively.

\section{Preliminary}
In this section, we firstly list some symbols and then  write some properties of minimally 2-(edge)-connected graphs and
some useful lemmas.

Let $G$ be a graph with vertex set $V(G)=\{v_1,v_2,\cdots,v_n\}$ and edge set $E(G)$.
For $v \in V(G)$, $A \subset V(G)$, denote by $N(v)$ and $d(v)$ the neighborhood and the degree of the vertex $v$ in $G$,
and denote $N_A(v)=N(v) \cap A$, $d_A(u)=|N_A(v)|$. The adjacent matrix of a graph $G$ is defined as the $n \times n$ square
matrix $A(G)=(a_{ij})$ whose entries are $1$ if $v_iv_j \in E(G)$, otherwise $0$. The spectral radius of $G$, denote by
$\rho(G)$, is defined to be the largest eigenvalue of $A(G)$.
A \emph{chord} of a graph is an edge between two vertices of a cycle that is not an edge on the cycle.
If a cycle has at least one chord, then it is called a \emph{chorded cycle}.

A graph  is 2-(edge)-connected graph if it contains a 2-vertex (edge) cut set. A graph is minimally 2-(edge)-connected, introduced in \cite{BBE},
if it is a $2$-(edge)-connected  but omitting any  edge the resulting graph is no
longer $2$-(edge)-connected.
By definition, a 2-connected graph is also  2-edge-connected, but vice versa.
However there exists a minimally 2-connected graph that is not  minimally 2-edge-connected, for example the graph $H(2,2)$ shown as Fig.\ref{fig-2}. There exists a minimally 2-edge-connected graph that is not minimally 2-connected, for example the graph $C_n*C_m$. Clearly, $C_n$ is both of minimally 2-(edge)-connected. Furthermore, we will give some the properties of a minimally 2-(edge)-connected graph.

\begin{lem}[\cite{GD}]\label{le-2.1}
A minimally 2-connected graphs with more than three vertices contains no triangles.
\end{lem}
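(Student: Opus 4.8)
The plan is to argue by contradiction: assume $G$ is minimally 2-connected, $|V(G)|\ge 4$, and $G$ contains a triangle on vertices $x,y,z$, and then exhibit a single edge whose deletion leaves $G$ still 2-connected, contradicting minimality. The engine is an elementary observation about deleting one edge $uv$ from a 2-connected graph. Since 2-connected implies 2-edge-connected, $G-uv$ is connected; and if $s$ were a cut vertex of $G-uv$, then first $s\notin\{u,v\}$ (because $(G-uv)-u=G-u$ is connected, and likewise for $v$), and second, writing $(G-uv)-s=(G-s)-uv$ and using that $G-s$ is connected, the edge $uv$ must be a bridge of $G-s$. Equivalently, \emph{every} $u$-$v$ path of $G-uv$ must pass through $s$. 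I would record this as: any cut vertex of $G-uv$ is distinct from $u,v$ and lies on every $u$-$v$ path of $G-uv$.

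Applying this to the triangle edge $uv=xy$, note that $x$-$z$-$y$ is an $x$-$y$ path of $G-xy$ using the surviving edges $xz$ and $zy$; hence the observation forces the \emph{only} possible cut vertex of $G-xy$ to be $z$ (and symmetrically, the only candidate for $G-yz$ is $x$, and for $G-xz$ is $y$). So it suffices to rule out $z$ as a cut vertex of $G-xy$, which I would do by producing a \emph{second} $x$-$y$ path in $G-xy$ that avoids $z$.

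This is where the hypothesis $|V(G)|\ge 4$ enters: I would choose a vertex $w\notin\{x,y,z\}$. By Menger's theorem in its fan form, applied to the 2-connected graph $G$, the vertex $w$, and the set $\{x,y,z\}$, there are two paths from $w$ to $\{x,y,z\}$ meeting $\{x,y,z\}$ only at their distinct endpoints and otherwise disjoint. Concatenating them gives a path $Q$ between two triangle vertices, say (after relabeling) from $x$ to $y$, whose internal vertices all lie outside $\{x,y,z\}$. In particular $w$ is internal, so $Q$ has length at least $2$, does not use the edge $xy$, and avoids $z$. Thus $Q$ is an $x$-$y$ path of $G-xy$ missing $z$, contradicting that $z$ lies on every $x$-$y$ path of $G-xy$. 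Hence $G-xy$ has no cut vertex; being connected on at least three vertices, it is 2-connected, contradicting the minimality of $G$, so no triangle can exist.

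I expect the crux to be the middle step: correctly pinning down that the unique possible cut vertex of $G-xy$ is exactly the opposite triangle vertex $z$, and then guaranteeing an externally routed $x$-$y$ path $Q$ of length at least $2$ that avoids $z$. The length-at-least-$2$ requirement is precisely what makes $w\notin\{x,y,z\}$, i.e. $|V(G)|\ge 4$, indispensable; indeed for $n=3$ the triangle $K_3$ is itself minimally 2-connected, so the order hypothesis cannot be dropped. Everything else, namely the reduction via 2-edge-connectivity and the bridge characterization of cut vertices after an edge deletion, is routine.
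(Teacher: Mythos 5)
Your proof is correct, but there is nothing in the paper to compare it against: the paper states this lemma as a quoted result of Dirac \cite{GD} and gives no proof of its own. Checking your argument: the preliminary observation is sound (for an edge $uv$ of a 2-connected $G$, the graph $G-uv$ is connected by $\kappa'(G)\ge\kappa(G)\ge 2$, any cut vertex $s$ of $G-uv$ satisfies $s\notin\{u,v\}$ since $(G-uv)-u=G-u$ is connected, and then $uv$ is a bridge of $G-s$, so $s$ lies on every $u$-$v$ path of $G-uv$); applying it to a triangle edge correctly pins the unique candidate cut vertex down to the opposite triangle vertex; and the fan form of Menger's theorem, applied from a vertex $w\notin\{x,y,z\}$ to the set $\{x,y,z\}$, legitimately yields a path $Q$ of length at least $2$ between two triangle vertices, internally disjoint from the triangle, which kills that candidate (the relabeling is harmless by the symmetry of the triangle, since your cut-vertex analysis applies to each of the three edges). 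One remark on economy: the paper also quotes Plummer's characterization as Lemma \ref{le-2.5} (a $2$-connected graph is minimally $2$-connected if and only if no cycle has a chord), and with that in hand your fan path finishes the job in one line --- the cycle formed by $Q$ together with the two edges $ac$, $cb$ through the third triangle vertex $c$ has $ab$ as a chord, contradicting minimality. Your cut-vertex \emph{engine} is in effect a self-contained proof of exactly the instance of that characterization you need, so your route trades brevity for independence from the quoted lemmas; both are valid.
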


\begin{lem}[\cite{BBE}]\label{le-2.4}
Every cycle of a minimally 2-connected graph contains at least two vertices of degree two.
\end{lem}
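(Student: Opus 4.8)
The plan is to reduce the statement to a criterion for when deleting a single edge preserves $2$-connectivity, and then to locate two vertices of degree $2$ on a prescribed cycle by an extremal argument on the pieces that such deletions cut off.

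First I would establish the following criterion through Menger's theorem: if $G$ is $2$-connected and $e=xy\in E(G)$, then $G-e$ is again $2$-connected if and only if $x$ and $y$ lie on a common cycle of $G-e$. Indeed, since a $2$-connected graph is $2$-edge-connected (by $\delta(G)\ge\kappa'(G)\ge\kappa(G)$), the graph $G-e$ is connected; and any cut vertex $w$ of $G-e$ must satisfy $w\notin\{x,y\}$ and must separate $x$ from $y$, for otherwise removing $e$ from the connected graph $G-w$ would leave it connected, so $w$ would not be a cut vertex of $G-e$. Hence $G-e$ fails to be $2$-connected precisely when some vertex $w\notin\{x,y\}$ lies on every $x$–$y$ path of $G-e$. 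Next I fix a cycle $C$ of the minimally $2$-connected graph $G$ and apply the criterion to each edge $e=xy$ of $C$: by minimality $G-e$ is not $2$-connected, so such a separating vertex $w=w(e)$ exists. Because the arc $C-e$ is itself an $x$–$y$ path of $G-e$, the vertex $w(e)$ must lie on it, so $w(e)\in V(C)\setminus\{x,y\}$. Consequently, deleting $w(e)$ together with $e$ splits $G$ into two parts whose traces on $C$ are the two arcs determined by $x$, $y$ and $w(e)$, and no edge of $G$ joins the two parts except $e$.

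To produce the degree-$2$ vertices I would then run an extremal argument. Among all pairs $(e,w(e))$ with $e=v_iv_{i+1}\in E(C)$, I choose one for which the part $S$ containing the endpoint $v_{i+1}$ is as small as possible, and I claim the ``tip'' vertex $v_{i+1}$ has degree $2$ in $G$. If instead $d(v_{i+1})\ge 3$, then, since no edge leaves $S$ except $e$, every extra neighbour of $v_{i+1}$ lies inside $S$; analysing the next edge $v_{i+1}v_{i+2}$ with its own separating vertex should then exhibit a strictly smaller separated part, contradicting minimality. Repeating the argument from the opposite endpoint (equivalently choosing the minimal part on the $x$-side) yields a second, distinct vertex of degree $2$ on $C$, completing the proof. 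I expect the technical heart to be exactly this last step: controlling how the separating vertices of neighbouring edges interact so that a genuinely smaller separated part can be extracted, or, more robustly, verifying that the induced structure on $S$ (after adding one edge to account for the external connection through $w(e)$) is again minimally $2$-connected, so that one may instead induct on the number of edges. The triangle-freeness supplied by Lemma \ref{le-2.1}, together with the absence of chords on cycles (which follows from the same edge-deletion criterion, as a chord of a cycle would make $G-e$ $2$-connected), should keep this bookkeeping manageable.
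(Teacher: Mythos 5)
Note first that the paper offers no proof of this lemma to compare against: it is quoted verbatim from Bollob\'as's \emph{Extremal Graph Theory} \cite{BBE}, so your argument has to stand entirely on its own. Its first half does stand: the Menger-type criterion (for $e=xy$ in a $2$-connected $G$, the graph $G-e$ is $2$-connected iff $x$ and $y$ lie on a common cycle of $G-e$), the observation that every separating vertex $w(e)$ of $G-e$ must lie on the path $C-e$, and the conclusion that $G-e-w(e)$ has exactly two parts joined in $G$ only by $e$ are all correct; this is in substance Plummer's chord-free characterization, which the paper records separately as Lemma \ref{le-2.5}. The genuine gap is the descent step of your extremal argument, and you flag it yourself (``should then exhibit a strictly smaller separated part'', ``I expect the technical heart to be exactly this last step''). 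Concretely: let $S$ be a minimal head-part, attached at $e=v_iv_{i+1}$ with separator $w=w(e)$ and opposite part $S_x\ni v_i$, and suppose $d(v_{i+1})\ge 3$. The next edge $e'=v_{i+1}v_{i+2}$ comes with \emph{some} separator $w'=w(e')$, but $w'$ need not equal $w$ and need not lie in $S$, and the part $S'$ of $G-e'-w'$ containing $v_{i+2}$ need not be contained in $S$: a path from $v_{i+2}$ in $G-e'-w'$ may pass through $w$ and continue into $S_x$. One can check it never reaches $v_i$ or $v_{i+1}$ (either would force $v_{i+1}$ and $v_{i+2}$ into the same component of $G-e'-w'$), but nothing you have proved prevents $S'$ from swallowing $w$ together with a large piece of $S_x\setminus\{v_i\}$, so the required inequality $|S'|<|S|$ does not follow. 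Since the whole proof is a proof by contradiction hinging on exactly this inequality, what you have is a setup plus a conjecture, not a proof.

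There is a second, independent gap: distinctness of the two degree-$2$ vertices. Your second vertex comes from ``repeating the argument from the opposite endpoint'', but every vertex of $C$ is simultaneously the head of one edge of $C$ and the tail of the next, so the two extremal constructions could a priori return the same vertex; some additional argument is needed, e.g.\ locating one degree-$2$ vertex inside each of the two parts of a single split. Your own fallback suggestion --- show that the piece cut off, suitably completed, is again minimally $2$-connected and induct on the number of edges --- is the right instinct and is close to how the standard proofs behind the citation proceed (Dirac and Bollob\'as analyse the chain of blocks of $G-e$, each block being an edge or a smaller minimally $2$-connected graph, and the two ends of the chain are where the two distinct degree-$2$ vertices are found); but in your write-up this remains a suggestion rather than an executed induction, so the lemma is not established.
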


\begin{lem}[\cite{MP}]\label{le-2.5}
$G$ is a minimally 2-connected graph if and only if no cycle of $G$ has a chord.
\end{lem}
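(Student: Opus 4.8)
The plan is to prove both implications by translating the chord condition into the language of internally disjoint paths, relying only on the standard structure theory of $2$-connected graphs: namely that a $2$-connected graph is $2$-edge-connected, so every edge lies on a cycle (an edge on no cycle would be a bridge), and Whitney's criterion that any two vertices of a $2$-connected graph lie on a common cycle. Throughout I assume $G$ is $2$-connected, and I show that minimality is equivalent to the absence of chorded cycles; the unifying idea is that a chord of a cycle and a common cycle through two vertices both encode a \emph{third} internally disjoint route between a pair of vertices.

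For the forward implication I would argue the contrapositive: if some cycle $C$ has a chord $e=uv$, then $G-e$ is still $2$-connected, so $G$ is not minimal. Since $G$ is $2$-edge-connected, $e$ is not a bridge, so $G-e$ is connected and retains the same (at least three) vertices; it therefore suffices to rule out a cut vertex. Suppose $x$ were a cut vertex of $G-e$. First, $x\notin\{u,v\}$, because $e$ is incident to both $u$ and $v$, so that $(G-e)-u=G-u$ and $(G-e)-v=G-v$, which are connected by the $2$-connectivity of $G$. Hence $x$ is distinct from $u$ and $v$, and since $G-x$ is connected while $(G-x)-e$ is disconnected, the edge $e$ is a bridge of $G-x$, so every $u$-$v$ path in $G-x$ uses $e$. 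But the chord $e$ splits $C$ into two internally disjoint $u$-$v$ arcs $P_1,P_2$, neither of which uses $e$, and the single vertex $x$ can be internal to at most one of them. The arc avoiding $x$ is then a $u$-$v$ path in $G-x$ not using $e$, a contradiction. Thus $G-e$ has no cut vertex and is $2$-connected.

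For the backward implication I would show that if no cycle has a chord then every edge is essential. Fix any edge $e=uv$; since $G$ is $2$-edge-connected, $e$ lies on a cycle. If $G-e$ were $2$-connected, then by Whitney's criterion $u$ and $v$ lie on a common cycle $C'$ of $G-e$. As $e$ is absent from $G-e$, the edge $uv$ is not an edge of $C'$, so $u$ and $v$ cannot be consecutive on $C'$; hence $e$ is a chord of the cycle $C'$ in $G$, contradicting the hypothesis. Therefore $G-e$ fails to be $2$-connected for every $e$, i.e.\ $G$ is minimally $2$-connected. The step requiring the most care is the cut-vertex analysis in the forward direction: one must separate the case where the putative cut vertex coincides with an endpoint of the chord from the case where it lies on one of the two arcs. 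Once the chord is recognized as a third internally disjoint $u$-$v$ route, both directions collapse to the single observation that a chord and a common cycle carry exactly the same structural information.
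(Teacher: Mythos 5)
Your proof is correct, but note that the paper itself offers no proof of this statement at all: it is quoted as a known theorem of Plummer (reference [MP], ``On minimal blocks''), so there is nothing internal to compare against. Your argument is a clean, self-contained derivation from first principles. In the forward direction you show that deleting a chord $e=uv$ of a cycle $C$ cannot create a cut vertex: the endpoints $u,v$ are excluded because $(G-e)-u=G-u$, and any other candidate $x$ would make $e$ a bridge of $G-x$, which is impossible since one of the two internally disjoint $u$--$v$ arcs of $C$ avoids $x$ and does not use $e$. In the backward direction you invoke Whitney's criterion: if $G-e$ were still $2$-connected, $u$ and $v$ would lie on a common cycle of $G-e$, and that cycle would have $e$ as a chord in $G$. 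Both steps are sound (the only external inputs are that a $2$-connected graph on at least three vertices is $2$-edge-connected, and Whitney's theorem), and the edge cases -- $x$ coinciding with an endpoint of the chord, and $u,v$ possibly being consecutive on the Whitney cycle -- are handled explicitly. The trade-off relative to the paper is simply self-containedness versus brevity: the authors treat this as a black box from the structure theory of minimal blocks, whereas your version makes the equivalence transparent at the cost of assuming Whitney's criterion, which is itself a nontrivial (though standard) consequence of Menger's theorem.
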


\begin{lem}[\cite{AB}]\label{le-2.3}
If $G$ is a minimally 2-(edge)-connected graph, then $\delta(G)=2$.
\end{lem}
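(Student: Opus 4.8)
The plan is to separate the statement into the easy lower bound $\delta(G)\ge 2$ and the substantive assertion that $G$ has a vertex of degree exactly $2$, i.e.\ $\delta(G)\le 2$. For the lower bound I would simply invoke the inequality $\delta(G)\ge\kappa'(G)\ge\kappa(G)$ recorded in the introduction: if $G$ is minimally $2$-connected then $\kappa(G)\ge 2$ gives $\delta(G)\ge 2$, and if $G$ is minimally $2$-edge-connected then already $\kappa'(G)\ge 2$ yields $\delta(G)\ge\kappa'(G)\ge 2$. So everything reduces to exhibiting a vertex of degree $2$.

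For the minimally $2$-connected case this is immediate from Lemma \ref{le-2.4}. A $2$-connected graph has minimum degree at least $2$ and hence contains a cycle $C$; by Lemma \ref{le-2.4} the cycle $C$ carries at least two vertices of degree $2$ in $G$, so $\delta(G)=2$. Equivalently, if every vertex had degree at least $3$, any cycle of $G$ would contradict Lemma \ref{le-2.4}.

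For the minimally $2$-edge-connected case the cycle statement of Lemma \ref{le-2.4} is not available, so I would argue through a closed ear decomposition $G=C_0\cup P_1\cup\cdots\cup P_k$, which every $2$-edge-connected graph admits, where $C_0$ is a cycle and each $P_i$ is an ear whose endpoints lie in $G_{i-1}:=C_0\cup\cdots\cup P_{i-1}$ and whose internal vertices are new. The crucial point is that minimality forbids a trivial (single-edge) ear: if some $P_i$ were a single edge $e$ with no internal vertex, then deleting $P_i$ from the list would still leave a valid ear decomposition of $G-e$, since $P_i$ contributes no new vertex and therefore the endpoints of every later ear still lie in the union of the remaining earlier pieces; hence $G-e$ would remain $2$-edge-connected, contradicting minimality. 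Consequently every ear, in particular the last ear $P_k$, has at least one internal vertex, and such a vertex lies on no other ear and so has degree exactly $2$ in $G$. (Closed ears always carry internal vertices in a simple graph, so only open ears could ever be trivial.) This gives $\delta(G)=2$.

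The main obstacle is precisely this second case: lacking the convenient per-cycle statement of Lemma \ref{le-2.4}, one must exploit the global ear structure, and the delicate step to verify carefully is that excising a hypothetical single-edge ear $P_i$ genuinely produces a legitimate ear decomposition of $G-e$ — that is, that the endpoints of the ears added after $P_i$ never rely on $P_i$, which is automatic because a single-edge ear introduces no new vertex.
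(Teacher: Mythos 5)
Your proof is correct, but it is worth noting that the paper does not prove this statement at all: it is quoted as Lemma \ref{le-2.3} directly from Bondy and Murty \cite{AB}, so there is no internal argument to compare against. What you supply is a legitimate self-contained proof, modulo two standard facts: the lower bound $\delta(G)\ge\kappa'(G)\ge\kappa(G)\ge 2$, and, in the vertex-connectivity case, the paper's own quoted Lemma \ref{le-2.4} (every cycle of a minimally 2-connected graph has two vertices of degree two), which immediately yields a degree-2 vertex since any 2-connected graph contains a cycle. Your ear-decomposition argument for the edge case is the genuinely substantive part, and it is sound: the key step — that deleting a hypothetical single-edge ear $P_i$ from a closed-ear decomposition leaves a valid closed-ear decomposition of $G-e$ — holds exactly for the reason you give, namely that a trivial ear contributes no vertices, so the attachment conditions of all later ears are untouched, and possession of a closed-ear decomposition certifies that $G-e$ is still 2-edge-connected, contradicting minimality; the internal vertex of the last ear then has degree 2 in $G$. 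An alternative route for the edge case, closer in spirit to the tools the paper does develop, would be to take any cycle $C$ through a fixed edge and note that Lemma \ref{le-2.11} plus Lemma \ref{le-2.10} let one transfer the vertex-connectivity statement to blocks of $G$; but your ear argument is more direct and avoids any circularity with the lemmas the paper only states later.
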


\begin{lem}\label{le-2.11}
A $2$-edge-connected subgraph of a minimally $2$-edge-connected graph is also minimally $2$-edge-connected.
\end{lem}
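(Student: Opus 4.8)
The plan is to argue by contradiction, using the standard reformulation that in a 2-edge-connected graph $G$, deleting an edge $e$ preserves 2-edge-connectedness exactly when $e$ lies in no $2$-edge-cut (minimal edge cut of size two); equivalently, $G-e$ fails to be 2-edge-connected precisely when $e$ belongs to some $2$-edge-cut. So let $G$ be minimally 2-edge-connected and let $H$ be a 2-edge-connected subgraph. Suppose, for contradiction, that $H$ is \emph{not} minimally 2-edge-connected. Then there is an edge $e\in E(H)$ such that $H-e$ is still 2-edge-connected. The goal is to show that this forces $G-e$ to be 2-edge-connected as well, contradicting the minimality of $G$.

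First I would transfer information from $H$ to $G$. Since $e\in E(H)\subseteq E(G)$ and $G$ is minimally 2-edge-connected, $G-e$ is not 2-edge-connected. Because $G$ has no bridge, $e$ lies on a cycle, so $G-e$ is still connected; hence $G-e$ must contain a bridge $f$. Thus $\{e,f\}$ is a $2$-edge-cut of $G$, and it induces a bipartition $V(G)=X\cup Y$ in which $e$ and $f$ are the only edges joining $X$ to $Y$. The key localization step is then to observe that this cut is inherited by $H$: as $e$ has one endpoint in $X$ and the other in $Y$, both $V(H)\cap X$ and $V(H)\cap Y$ are nonempty, and every edge of $H$ crossing between $X$ and $Y$ is in particular an edge of $G$ crossing between $X$ and $Y$, hence belongs to $\{e,f\}$. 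Since $H$ is 2-edge-connected and meets both sides, the crossing edge set of $H$ must have size at least two, which forces $f\in E(H)$ and makes $\{e,f\}$ a genuine $2$-edge-cut of $H$.

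Finishing is immediate: with $\{e,f\}$ a $2$-edge-cut of $H$, the edge $f$ becomes a bridge of $H-e$ (it is the unique remaining edge of $H$ crossing the partition, and both parts are nonempty), so $H-e$ is not 2-edge-connected, contradicting our assumption. I expect the only delicate point to be the localization in the previous paragraph, namely confining all crossing edges of $H$ to $\{e,f\}$ and checking that $\{e,f\}$ is a \emph{true} cut of $H$ with both sides nonempty; the remaining connectivity bookkeeping (that $G-e$ stays connected and that $f$ indeed degenerates into a bridge) is routine and follows directly from Lemma \ref{le-2.3} and the definition of 2-edge-connectedness.
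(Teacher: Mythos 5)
Your proof is correct, and its skeleton is the same as the paper's: assume for contradiction that $H-e$ is $2$-edge-connected for some $e\in E(H)$, invoke minimality of $G$ to find a cut edge $f$ of $G-e$, and derive a contradiction from the resulting two-edge cut $\{e,f\}$ of $G$. Where the two arguments part ways is the finish. The paper keeps the cut in $G-e$ and claims that the $2$-edge-connected graph $H-e$ contains a cycle through both endpoints of $e$; such a cycle would have to cross the one-edge cut of $G-e$ at least twice, a contradiction. You instead restrict the bipartition $V(G)=X\cup Y$ to $H$, observe that every $X$--$Y$ edge of $H$ lies in $\{e,f\}$, use bridgelessness of $H$ (which meets both sides, since $e$ crosses) to force $f\in E(H)$, and then exhibit $f$ as a bridge of $H-e$. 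Your finish is in fact the more watertight of the two: in a graph that is only $2$-edge-connected rather than $2$-connected, two prescribed vertices need not lie on a common cycle (two triangles sharing a vertex give a counterexample), so the paper's cycle claim should properly be phrased as two edge-disjoint paths between the endpoints (Menger), whose union still crosses the cut at least twice; your edge-counting argument sidesteps this issue entirely. One cosmetic remark: the connectedness of $G-e$ follows from $G$ being bridgeless, not from Lemma \ref{le-2.3} (which concerns the minimum degree), so that citation should be dropped.
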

\begin{proof}
Let $G$ be a minimally 2-edge-connected graph, and $H$ be a 2-edge-connected subgraph of $G$.
By  contrary that $H$ is not minimal,
then there exists an edge $uv \in E(H)$ such that $H-uv$ is 2-edge-connected.
Since $G$ is a minimally 2-edge-connected graph,
we get that $G-uv$ is $1$-edge-connected.
Thus  $G-uv$ has a cut edge,  say $xy,$
which divides $V(G-uv)$ into  two vertices sets $U$ and $V$ such that $x\in U$ and $y\in V$,
and so $e_{G-uv}(U,V)=1$.
Also we have $u\in U$ and $v\in V$ since $G$ is a 2-edge-connected graph.
Noticed that $H-uv$ is a subgraph of $G-uv$ that is assumed to be 2-edge connected,
we claim that $H-uv$ has a cycle $C$ connecting $u$ and $v$ which must be contained in $G-uv$.
It is a contradiction.
Thus, $H$ is a minimally 2-edge-connected graph.
\end{proof}

\begin{lem}\label{le-2.6}
If $G$ is a minimally 2-edge-connected graph, then no cycle of $G$ has a chord.
\end{lem}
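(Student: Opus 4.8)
The plan is to argue by contradiction, using the standard characterization that a connected graph on at least two vertices is $2$-edge-connected if and only if it contains no bridge (equivalently, every edge lies on a cycle). Suppose some cycle $C$ of $G$ has a chord $e=xy$. Since deleting a single edge from a $2$-edge-connected graph cannot disconnect it, $G-e$ is connected; thus to contradict the minimality of $G$ it suffices to show that $G-e$ has no bridge, i.e. that $G-e$ is again $2$-edge-connected.

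So suppose for contradiction that $G-e$ has a bridge $f=uv$. Then $f$ separates $V(G)$ into two parts $U\ni u$ and $W\ni v$ with $e_{G-e}(U,W)=1$, the single crossing edge being $f$. Because $G$ itself is $2$-edge-connected, the cut $(U,W)$ must carry at least two edges in $G$, and the only edge of $G$ missing from $G-e$ is the chord $e=xy$. Hence $e$ must cross this cut, so (up to relabeling) $x\in U$ and $y\in W$, and the cut $(U,W)$ in $G$ consists of exactly the two edges $f$ and $xy$.

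Now the chord $xy$ splits the cycle $C$ into two arcs $P_1$ and $P_2$, both joining $x$ to $y$; these share only their endpoints and are therefore edge-disjoint, and neither of them uses the chord $e$. Since each $P_i$ runs from $x\in U$ to $y\in W$, it must cross the cut $(U,W)$ an odd number of times, in particular at least once; as $e$ is the only crossing edge other than $f$ and is avoided by both arcs, each of $P_1$ and $P_2$ must traverse $f$. This contradicts the edge-disjointness of $P_1$ and $P_2$. Therefore $G-e$ has no bridge and is $2$-edge-connected, contradicting the minimality of $G$; hence no cycle of $G$ has a chord.

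I expect the main obstacle to be the bookkeeping of the cut rather than any deep idea: one must argue carefully that, once $f$ is a bridge of $G-e$, the deleted chord is forced to be the unique second edge across the corresponding cut, and then exploit that a single edge $f$ cannot be shared by the two edge-disjoint arcs of $C$. This mirrors the chord-free characterization of minimally $2$-connected graphs in Lemma~\ref{le-2.5}, but here the bridge/cut language replaces the vertex-cut argument, and the parity-of-crossings observation is the step that makes the contradiction clean.
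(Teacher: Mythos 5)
Your proof is correct, but it takes a different route from the paper. The paper deduces the lemma in one line from its Lemma~\ref{le-2.11} (a $2$-edge-connected subgraph of a minimally $2$-edge-connected graph is itself minimally $2$-edge-connected): a chorded cycle is a $2$-edge-connected subgraph that is visibly \emph{not} minimally $2$-edge-connected (delete the chord and a cycle remains), so $G$ cannot contain one. You instead work directly in $G$: you delete the chord $e$, suppose a bridge $f$ appears, pin down the resulting cut as consisting of exactly $f$ and $e$, and then derive a contradiction from the two edge-disjoint $x$--$y$ arcs of the cycle each being forced through $f$. Your argument is self-contained and essentially inlines, for this special case, the same cut-counting mechanism the paper uses to prove Lemma~\ref{le-2.11} itself (there, a cycle through the endpoints of the deleted edge would have to cross a one-edge cut twice). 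What the paper's modular approach buys is reuse --- Lemma~\ref{le-2.11} is invoked again later (e.g.\ in Claim~\ref{claim-4.3} to exclude $H(2,2)$) --- whereas your approach buys independence from that auxiliary lemma and makes the chord-deletion step explicit; both are sound, and your cut bookkeeping (exactly two crossing edges, parity of crossings, edge-disjointness of the arcs) is carried out correctly.
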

\begin{proof}Suppose by contrary that the cycle of the minimally 2-edge-connected graph $G$ has a chord. Then $G$ contains a
chorded cycle, which is not minimally 2-edge-connected. This is a contradiction from Lemma \ref{le-2.11}.
\end{proof}

Recall that $\kappa(G)$ ($\kappa'(G)$) denoted   the vertex connectivity (edge connectivity) of
$G$.
\begin{lem}\label{le-2.10}
If  $G$ be a minimally 2-edge-connected graph with no cut vertex,  then $G$ is minimally 2-connected.
\end{lem}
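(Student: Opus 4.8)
The plan is to deduce minimal $2$-connectivity directly from the chord-free characterizations already established, so the argument will be short once the right lemmas are lined up in the correct order.

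First I would verify that the two hypotheses together force $G$ to be $2$-connected in the vertex sense. Since $G$ is $2$-edge-connected it is connected, and by Lemma~\ref{le-2.3} we have $\delta(G)=2$, which in a simple graph requires $|V(G)|\ge 3$. A connected graph on at least three vertices with no cut vertex is by definition $2$-connected, so $\kappa(G)\ge 2$. This is precisely where the ``no cut vertex'' assumption enters, and it explains why that hypothesis cannot be dropped: a minimally $2$-edge-connected graph with a cut vertex (such as the graph $C_n*C_m$ mentioned earlier) is in general not $2$-connected at all, hence not minimally $2$-connected.

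Next I would bring in the structural information about cycles. Because $G$ is minimally $2$-edge-connected, Lemma~\ref{le-2.6} guarantees that no cycle of $G$ has a chord. Now Lemma~\ref{le-2.5} characterizes minimally $2$-connected graphs (among $2$-connected graphs) as exactly those in which no cycle has a chord. Having just shown that $G$ is $2$-connected and that it contains no chorded cycle, the equivalence of Lemma~\ref{le-2.5} applies and immediately yields that $G$ is minimally $2$-connected.

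The main—and essentially only—delicate point is the ordering of these steps: Lemma~\ref{le-2.5} is a characterization valid for graphs that are already $2$-connected, so one must certify $2$-connectivity \emph{before} invoking it, since chord-freeness alone is far from sufficient (a tree is vacuously chord-free yet not $2$-connected). Once $2$-connectivity is secured from the ``no cut vertex'' hypothesis, the chain Lemma~\ref{le-2.6} $\Rightarrow$ chord-free $\Rightarrow$ Lemma~\ref{le-2.5} closes the proof at once.
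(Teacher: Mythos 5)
Your proof is correct, but it takes a genuinely different route from the paper's. Both arguments begin identically: $2$-edge-connectivity gives connectedness, Lemma~\ref{le-2.3} gives $\delta(G)=2$ (hence $|V(G)|\ge 3$), and the absence of cut vertices then yields $\kappa(G)\ge 2$. The divergence is in how minimality is obtained. The paper argues directly with the parameter inequality $\kappa\le\kappa'$ applied to the edge-deleted graphs: since $G$ is minimally $2$-edge-connected, $\kappa'(G-e)\le 1$ for every edge $e$, hence $\kappa(G-e)\le\kappa'(G-e)\le 1$, so no $G-e$ is $2$-connected and $G$ is minimally $2$-connected. You instead route through the structural characterization: Lemma~\ref{le-2.6} (a minimally $2$-edge-connected graph has no chorded cycle) followed by Plummer's theorem, Lemma~\ref{le-2.5}. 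Your chain is valid, and you correctly flag the one delicate point, namely that Lemma~\ref{le-2.5} as printed must be read as a characterization \emph{among $2$-connected graphs} (otherwise trees refute the ``if'' direction), so $2$-connectivity has to be certified before invoking it --- which your first step does. The trade-off: the paper's argument is more elementary and self-contained, using nothing beyond the definitions and $\kappa\le\kappa'$; yours leans on heavier machinery (Lemma~\ref{le-2.6}, which itself rests on Lemma~\ref{le-2.11}, plus the cited theorem of Plummer), but in exchange it makes explicit the structural mechanism --- chord-freeness --- behind minimal $2$-connectivity, and it usefully exposes the implicit side condition in the paper's statement of Lemma~\ref{le-2.5}.
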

\begin{proof}
Since $G$ is a 2-edge-connected graph and $G$ has no cut vertex,  we have $2=\kappa'(G) \ge \kappa(G) \ge 2$, and so $\kappa(G)=2$.
Note that $G$ is minimally 2-edge connected.
We have $\kappa(G-e) \le \kappa'(G-e) \le 1$ for any $e \in E(G)$.
Thus $G$ is minimally 2-connected.
\end{proof}

\begin{lem}[\cite{Cvetkovi}]\label{le-2.7}
Let $G$ be a  graph with adjacency matrix $A(G)$, and let $\pi$ be an equitable partition of $G$ with quotient  matrix
$B_{\pi}$. Then $\det(x I-B_{\pi})\mid \det(x I-A(G)).$ Furthermore,  the largest
eigenvalue of $B_{\pi}$  is just the  spectral radius of $G$.
\end{lem}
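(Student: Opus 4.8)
The plan is to encode the partition $\pi=\{V_1,\dots,V_k\}$ by its \emph{characteristic matrix} $S\in\{0,1\}^{n\times k}$, whose $i$-th column is the indicator vector of the cell $V_i$. First I would observe that the equitability of $\pi$ is precisely the matrix identity $A(G)\,S=S\,B_{\pi}$: the $(v,i)$ entry of $A(G)S$ counts the neighbours of $v$ lying in $V_i$, and equitability says this count depends only on the cell of $v$, which is exactly the entry $b_{ji}$ of $B_{\pi}$ when $v\in V_j$. Since the cells are nonempty and pairwise disjoint, $S$ has full column rank $k$ and $S^{\top}S=D:=\mathrm{diag}(|V_1|,\dots,|V_k|)$ is invertible.

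For the divisibility claim, the identity $A(G)S=SB_{\pi}$ says the column space $\mathrm{Col}(S)$ is $A(G)$-invariant. I would pass to the normalized matrix $\tilde S=SD^{-1/2}$, whose columns are orthonormal, so that $A(G)\tilde S=\tilde S\,\hat B$ with $\hat B=D^{1/2}B_{\pi}D^{-1/2}$ symmetric and similar to $B_{\pi}$ (hence sharing its characteristic polynomial). Completing $\tilde S$ to an orthogonal matrix $Q=[\,\tilde S\mid T\,]$, the off-diagonal block $T^{\top}A(G)\tilde S=T^{\top}\tilde S\hat B=0$ vanishes, so $Q^{\top}A(G)Q$ is block upper triangular with diagonal blocks $\hat B$ and $T^{\top}A(G)T$. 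Taking determinants gives $\det(xI-A(G))=\det(xI-\hat B)\cdot\det\bigl(xI-T^{\top}A(G)T\bigr)$, and since $\det(xI-\hat B)=\det(xI-B_{\pi})$ this yields $\det(xI-B_{\pi})\mid\det(xI-A(G))$. In particular every eigenvalue of $B_{\pi}$ is an eigenvalue of $A(G)$, so $\rho(B_{\pi})\le\rho(G)$.

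The reverse inequality, which I expect to be the delicate point, follows from a Perron--Frobenius argument exploiting nonnegativity. Let $x\ge 0$, $x\neq 0$, be a nonnegative eigenvector of $A(G)$ for $\rho(G)$, and set $w=S^{\top}x\in\mathbb{R}^{k}$, whose $i$-th entry is $\sum_{v\in V_i}x_v$; then $w\ge 0$, and $w\neq 0$ because $x$ has a positive coordinate lying in some cell. Transposing the key identity and using that $A(G)$ is symmetric gives $S^{\top}A(G)=B_{\pi}^{\top}S^{\top}$, whence $B_{\pi}^{\top}w=S^{\top}A(G)x=\rho(G)\,S^{\top}x=\rho(G)\,w$. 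Thus $\rho(G)$ is an eigenvalue of $B_{\pi}^{\top}$, hence of $B_{\pi}$, forcing $\rho(G)\le\rho(B_{\pi})$. Combined with the previous bound this gives $\rho(B_{\pi})=\rho(G)$, and since $\rho(B_{\pi})$ is the largest eigenvalue of $B_{\pi}$, the largest eigenvalue of $B_{\pi}$ equals the spectral radius of $G$. The one subtlety to record is that no connectivity hypothesis on $G$ is needed: nonnegativity of $A(G)$ and of $B_{\pi}$ alone supplies the required nonnegative eigenvectors, while the full column rank of $S$ guarantees $w\neq 0$.
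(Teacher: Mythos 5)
The paper never proves this lemma: it is quoted verbatim (as Lemma \ref{le-2.7}) from the Cvetkovi\'{c}--Rowlinson--Simi\'{c} reference and used as a black box, so there is no in-paper argument to compare against. Your proof is correct and self-contained, and it is essentially the standard textbook derivation. The identity $A(G)S=SB_{\pi}$ correctly encodes equitability; normalizing to $\tilde S=SD^{-1/2}$ and completing to an orthogonal $Q$ gives the block-triangular (in fact, by symmetry of $Q^{\top}A(G)Q$, block-diagonal) form whose determinant yields $\det(xI-B_{\pi})\mid\det(xI-A(G))$; and the transpose trick $B_{\pi}^{\top}(S^{\top}x)=\rho(G)\,S^{\top}x$ applied to a nonnegative $\rho(G)$-eigenvector $x$ correctly shows $\rho(G)$ is itself an eigenvalue of $B_{\pi}$, with $S^{\top}x\neq 0$ guaranteed by nonnegativity. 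Two small points deserve a remark: first, your closing step ``since $\rho(B_{\pi})$ is the largest eigenvalue of $B_{\pi}$'' needs a one-line justification --- either cite Perron--Frobenius for the nonnegative matrix $B_{\pi}$, or note that all eigenvalues of $B_{\pi}$ are real (they are eigenvalues of the symmetric $A(G)$) and lie in $[-\rho(G),\rho(G)]$ while $\rho(G)$ is among them; second, your observation that no connectivity hypothesis is needed is right (for disconnected $G$ one takes the Perron vector of an extremal component extended by zero), and it is worth recording, since textbook statements of the ``furthermore'' part sometimes assume connectivity, whereas the paper applies the lemma to connected graphs only, so nothing is lost either way.
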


By Lemma \ref{le-2.7},  we can give the bound of the spectral radius of  $SK_{2,\frac{m-1}{2}}$.
\begin{lem}\label{le-2.8}
For odd number $m>5$, we have $\rho(SK_{2,\frac{m-1}{2}})$ is the largest root of $x^3-x^2-(m-2)x+m-3=0$ and
$\sqrt{m-2}<\rho(SK_{2,\frac{m-1}{2}}))<\sqrt{m-1}$.
\end{lem}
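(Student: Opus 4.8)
The plan is to exhibit an equitable partition of $G = SK_{2,\frac{m-1}{2}}$, read off its quotient matrix, and then invoke Lemma~\ref{le-2.7}. Write $k = \frac{m-1}{2}$. The graph $G$ consists of two ``hub'' vertices $u_1, u_2$ of degree $k$, together with $k-1$ common neighbours $w_2,\dots,w_k$ (each adjacent to both hubs) and a path $u_1 - x - w_1 - u_2$ of length three coming from the subdivided edge; every vertex other than $u_1,u_2$ has degree two. The map swapping $u_1 \leftrightarrow u_2$ and $x \leftrightarrow w_1$ while fixing $w_2,\dots,w_k$ is an automorphism, which suggests the partition $\pi = \{V_1, V_2, V_3\}$ with $V_1 = \{u_1,u_2\}$, $V_2 = \{w_2,\dots,w_k\}$ and $V_3 = \{x, w_1\}$. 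First I would check that $\pi$ is equitable: each hub has $0$ neighbours in $V_1$, exactly $k-1$ in $V_2$ and exactly $1$ in $V_3$; each $w_i$ with $i\ge 2$ has $2$ neighbours in $V_1$ and none elsewhere; and each of $x, w_1$ has one neighbour in $V_1$ and one in $V_3$.

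With this partition the quotient matrix is
\[
B_\pi = \begin{pmatrix} 0 & k-1 & 1 \\ 2 & 0 & 0 \\ 1 & 0 & 1 \end{pmatrix},
\]
and a direct cofactor expansion along the first row gives $\det(xI - B_\pi) = x^3 - x^2 - (2k-1)x + (2k-2)$. Substituting $2k = m-1$ converts this into $x^3 - x^2 - (m-2)x + (m-3)$, the claimed polynomial. By Lemma~\ref{le-2.7} the largest eigenvalue of $B_\pi$ equals $\rho(G)$, so $\rho(SK_{2,\frac{m-1}{2}})$ is exactly the largest root of $f(x) := x^3 - x^2 - (m-2)x + m-3$.

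For the bounds I would evaluate $f$ at the two endpoints. Using $s^2 = m-2$ one finds $f(\sqrt{m-2}) = -1 < 0$, and using $t^2 = m-1$ one gets $f(\sqrt{m-1}) = \sqrt{m-1} - 2$, which is positive since $m > 5$. Continuity then places a root of $f$ in $(\sqrt{m-2}, \sqrt{m-1})$. The remaining, and most delicate, point is to confirm that this root is the \emph{largest} one, i.e. that $f$ has no root $\ge \sqrt{m-1}$; I would settle this by showing $f$ is increasing on $[\sqrt{m-1},\infty)$. Indeed $f'(x) = 3x^2 - 2x - (m-2)$ satisfies $f'(\sqrt{m-1}) = 2m - 1 - 2\sqrt{m-1} > 0$ for $m > 5$, and since $f'$ is an upward parabola with vertex at $x = \tfrac13 < \sqrt{m-1}$, it stays positive on $[\sqrt{m-1},\infty)$. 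Hence $f(x) \ge f(\sqrt{m-1}) > 0$ there, so every root lies below $\sqrt{m-1}$; combined with the root already located in $(\sqrt{m-2},\sqrt{m-1})$, the largest root $\rho$ satisfies $\sqrt{m-2} < \rho < \sqrt{m-1}$. The main obstacle is precisely this localization-of-the-largest-root step; once the equitable partition is in hand, the polynomial identity and the endpoint sign computations are routine.
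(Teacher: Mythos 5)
Your proof is correct and follows essentially the same route as the paper: an equitable partition of $SK_{2,\frac{m-1}{2}}$ (yours is the same partition with the classes listed in a different order, so the quotient matrices are permutation-similar), Lemma~\ref{le-2.7} to identify $\rho$ with the largest root of $x^3-x^2-(m-2)x+m-3$, and then the sign evaluations $f(\sqrt{m-2})<0$, $f(\sqrt{m-1})>0$ together with $f'(x)>0$ on $[\sqrt{m-1},\infty)$ to locate the largest root. Your explicit computation $f(\sqrt{m-2})=-1$ and the remark about the vertex of $f'$ at $x=\tfrac13$ only make the paper's "one can verify" steps more transparent.
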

\begin{proof}
The vertices set of $SK_{2,\frac{m-1}{2}}$ has equitable partition and the quotient matrix is
\begin{eqnarray*}
B_{\pi}=\begin{array}{ll}
\begin{pmatrix}1&1&0\\1 &0&\frac{m-3}{2}\\0&2&0\end{pmatrix}\end{array}.
\end{eqnarray*}
We have $f(x)=\det(xI_3-B_\pi)=x^3-x^2-(m-2)x+m-3$.
By Lemma \ref{le-2.7},  $\rho(SK_{2,\frac{m-1}{2}})$ is the largest root of  $f(x)=0$.
Moreover, one can verify that $f(\sqrt{m-2})<0$, and so $\rho(SK_{2,\frac{m-1}{2}})> \sqrt{m-2}$.
Also, we have
$f(\sqrt{m-1})=\sqrt{m-1}-2>0$ for $m\ge6$ and $f^\prime(x)=3x^2-2x-(m-2)>0$ for $x \ge \sqrt{m-1}$.
Thus, $\rho(SK_{2,\frac{m-1}{2}})< \sqrt{m-1}$.
\end{proof}

Notice that if each edge of a $k$-connected graph is incident with at least one vertex of degree $k$
then the graph is minimally $k$-(edge)-connected.
Clearly,  $SK_{2,\frac{m-1}{2}}$ is the minimally $2$-(edge)-connected. Moreover, we have the following lemma.
\begin{lem}\label{le-4.1}
Let $G^*$ attain the maximum spectral radius among all minimally 2-edge-connected graph of size $m\ge6$, then $\rho(G^*)>
\sqrt{m-2}$.
\end{lem}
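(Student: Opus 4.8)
The plan is to prove the inequality by exhibiting, for each parity of $m$, an explicit minimally $2$-edge-connected graph of size $m$ whose spectral radius already exceeds $\sqrt{m-2}$. Since $G^*$ maximizes the spectral radius over the whole family of minimally $2$-edge-connected graphs of size $m$, the bound $\rho(G^*)>\sqrt{m-2}$ will follow immediately from $\rho(G^*)\ge\rho(H)$ for any such witness $H$.

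First I would treat the even case. When $m$ is even, consider $K_{2,\frac m2}$, which has exactly $2\cdot\frac m2=m$ edges. Every edge of $K_{2,\frac m2}$ is incident with a vertex of degree $2$ (one of the $\frac m2$ vertices in the larger part), so by the criterion recalled just before this lemma it is minimally $2$-(edge)-connected. Its spectral radius is $\rho(K_{2,\frac m2})=\sqrt{2\cdot\frac m2}=\sqrt{m}$, and $\sqrt{m}>\sqrt{m-2}$. Hence $\rho(G^*)\ge\rho(K_{2,\frac m2})=\sqrt{m}>\sqrt{m-2}$.

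Next I would treat the odd case using $SK_{2,\frac{m-1}{2}}$, the graph obtained from $K_{2,\frac{m-1}{2}}$ by subdividing one edge. Since $K_{2,\frac{m-1}{2}}$ has $m-1$ edges and a single subdivision adds one edge, this witness has exactly $m$ edges; moreover, as already noted in the text, $SK_{2,\frac{m-1}{2}}$ is minimally $2$-(edge)-connected. By Lemma~\ref{le-2.8} we have $\rho(SK_{2,\frac{m-1}{2}})>\sqrt{m-2}$ for odd $m>5$, which covers every odd $m\ge 6$. Consequently $\rho(G^*)\ge\rho(SK_{2,\frac{m-1}{2}})>\sqrt{m-2}$, completing the argument.

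The argument is short because the heavy lifting is carried by Lemma~\ref{le-2.8} and by the structural criterion guaranteeing that the chosen witnesses are minimally $2$-edge-connected; the only items requiring care are verifying that each witness has exactly $m$ edges and satisfies the minimality condition. The main (and only mild) obstacle is confirming the correct edge count after the subdivision in the odd case and checking that the hypothesis $m>5$ of Lemma~\ref{le-2.8} indeed covers all odd $m\ge 6$, which it clearly does.
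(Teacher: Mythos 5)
Your proof is correct and follows essentially the same route as the paper: the even case uses $K_{2,\frac m2}$ (minimally $2$-edge-connected with $\rho=\sqrt m$) and the odd case uses $SK_{2,\frac{m-1}{2}}$ together with Lemma~\ref{le-2.8}. The extra care you take in verifying edge counts and the minimality criterion is sound but adds nothing beyond the paper's argument.
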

\begin{proof}
If $m$ is even, then $\rho(G^*) \ge \rho(K_{2,\frac{m}{2}})=\sqrt{m}> \sqrt{m-2}$ since $K_{2,\frac{m}{2}}$ is minimally
2-edge-connected.  If $m$ is odd,  then
$\rho(G^*) \ge \rho(SK_{2,\frac{m-1}{2}})> \sqrt{m-2}$
from Lemma \ref{le-2.8}.
\end{proof}

\begin{lem}[\cite{AJH,QL}]\label{lem-2.6'}
Let $G$ and $H$ be two graphs, and let $P(G, x)$ be the characteristic polynomial of  $G$.\\
$\mathrm{(i)}$  If $H$ is a proper subgraph of $G$, then $\rho(H)<\rho(G)$.\\
$\mathrm{(ii)}$ If $P(H,\lambda)>P(G,\lambda)$ for $\lambda \ge \rho(G)$, then $\rho(H)<\rho(G)$.
\end{lem}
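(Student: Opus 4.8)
The plan is to treat the two parts separately, each resting on one structural fact about the spectrum of a graph. For part (i), I would invoke the Perron--Frobenius theorem: since $G$ is connected (which holds throughout our setting, as every minimally $2$-connected graph is connected), the adjacency matrix $A(G)$ is an irreducible nonnegative symmetric matrix, so $\rho(G)$ is a simple eigenvalue admitting a strictly positive eigenvector. Let $\mathbf{z}$ be a Perron eigenvector of $H$, extended by zeros on the vertices of $G$ outside $H$ so as to index it by $V(G)$; regarding $A(H)$ likewise as a padded $V(G)\times V(G)$ matrix, we have $0\le A(H)\le A(G)$ entrywise. Normalizing $\mathbf{z}^{\top}\mathbf{z}=1$, the Rayleigh-quotient comparison
\[
\rho(G)\ \ge\ \mathbf{z}^{\top}A(G)\mathbf{z}\ \ge\ \mathbf{z}^{\top}A(H)\mathbf{z}\ =\ \rho(H)
\]
gives the weak inequality immediately, the middle step using $\mathbf{z}\ge 0$ together with $A(G)\ge A(H)$.

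The main obstacle in part (i) is upgrading this to a strict inequality, and this is exactly where properness of the subgraph enters. I would argue by contradiction: if $\rho(H)=\rho(G)$, then both inequalities above collapse to equalities, which forces $\mathbf{z}$ to be an eigenvector of $A(G)$ for the eigenvalue $\rho(G)$. By the simplicity and positivity guaranteed by Perron--Frobenius, $\mathbf{z}$ must then be a strictly positive vector. If $V(H)\subsetneq V(G)$, this contradicts the fact that $\mathbf{z}$ has a zero coordinate at each vertex outside $H$. If instead $V(H)=V(G)$ while some edge $uv\in E(G)\setminus E(H)$ is missing, then equality of the two quadratic forms forces $\sum_{uv\in E(G)\setminus E(H)} z_uz_v=0$, which is impossible since every such $z_uz_v>0$. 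In either case we reach a contradiction, so $\rho(H)<\rho(G)$.

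For part (ii), I would record the elementary but decisive fact that $P(G,\lambda)\ge 0$ for every $\lambda\ge\rho(G)$: writing $P(G,\lambda)=\prod_i(\lambda-\lambda_i)$ over the eigenvalues $\lambda_i$ of $G$, each factor is nonnegative because $\lambda_i\le\rho(G)\le\lambda$. Combining this with the hypothesis $P(H,\lambda)>P(G,\lambda)$ for $\lambda\ge\rho(G)$ yields $P(H,\lambda)>0$ on the whole ray $[\rho(G),\infty)$. Since $\rho(H)$ is the largest real root of $P(H,x)$ and $P(H,x)\to+\infty$ as $x\to\infty$, the polynomial $P(H,x)$ can have no root in $[\rho(G),\infty)$; hence $\rho(H)<\rho(G)$. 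This half is essentially a one-line argument once the sign of $P(G,\cdot)$ beyond the spectral radius is in hand, so I expect the strictness step in part (i), driven by irreducibility of $A(G)$, to be the only genuinely delicate point.
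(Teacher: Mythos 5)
Your proof is correct, but there is nothing in the paper to compare it against: the paper does not prove this lemma at all, citing it instead to Hoffman--Smith and Li--Feng, so what you have supplied is a self-contained version of the standard argument. Two remarks. First, you were right to flag connectivity of $G$: as literally stated ("Let $G$ and $H$ be two graphs"), part (i) is false for disconnected $G$ (take $G=K_3\cup K_3$ and $H=K_3$; both have spectral radius $2$), so the implicit hypothesis that $G$ is connected --- which holds in every application in the paper --- is genuinely needed for your Perron--Frobenius strictness step, and your handling of it is the right one. Second, a minor point of care in part (i): if $H$ is disconnected, "the Perron eigenvector of $H$" should be read as a nonnegative eigenvector for $\rho(H)$ (e.g.\ the Perron vector of a component attaining $\rho(H)$, padded by zeros); nonnegativity is all your Rayleigh-quotient comparison and the collapse-to-equality step actually use, so the argument goes through unchanged. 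Part (ii) is exactly right and complete: since $A(H)$ is symmetric, all roots of $P(H,\cdot)$ are real, $P(G,\lambda)\ge 0$ on $[\rho(G),\infty)$, hence the hypothesis forces $P(H,\lambda)>0$ on that ray, leaving no room there for the largest root $\rho(H)$.
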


\begin{lem}[\cite{VN1,VN2}]\label{le-2.2}
Let $G$ be a $C_3$-free graph of size $m$. Then $\rho(G) \le \sqrt{m}$, the equality holds if and only if $G \cong K_{a,b}$,
where $ab=m$.
\end{lem}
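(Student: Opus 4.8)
The plan is to prove this via the power sums of the adjacency spectrum, an approach (essentially Nosal's) that yields the bound and controls the equality case at once. Write the eigenvalues of $A(G)$ as $\rho=\lambda_1\ge\lambda_2\ge\cdots\ge\lambda_n$. First I would record two trace identities. Since the diagonal of $A^2$ counts vertex degrees, $\sum_{i=1}^n\lambda_i^2=\mathrm{tr}(A^2)=2m$. Since a closed walk of length $3$ must traverse a triangle, the $C_3$-freeness of $G$ forces $\sum_{i=1}^n\lambda_i^3=\mathrm{tr}(A^3)=6\,t(G)=0$, where $t(G)$ denotes the number of triangles of $G$.

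For the inequality I would isolate $\lambda_1$ in the cubic power sum. Using $\sum_i\lambda_i^3=0$ together with $|\lambda_i|\le\lambda_1$ for every $i$ (as $\lambda_1=\rho(G)$ is the spectral radius), I obtain
\[
\lambda_1^3=\sum_{i\ge 2}(-\lambda_i^3)\le\sum_{i:\,\lambda_i<0}|\lambda_i|^3\le\lambda_1\sum_{i:\,\lambda_i<0}\lambda_i^2\le\lambda_1\Big(\sum_{i\ge 2}\lambda_i^2\Big)=\lambda_1\big(2m-\lambda_1^2\big).
\]
Dividing by $\lambda_1>0$ and rearranging gives $2\lambda_1^2\le 2m$, that is, $\rho(G)=\lambda_1\le\sqrt m$, as required.

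The equality analysis is where the real work lies, and I expect it to be the main obstacle. Equality throughout the displayed chain forces three things: first, there is no positive eigenvalue other than $\lambda_1$ (equality in the first and last inequalities), so $\lambda_2\le 0$ and hence $\lambda_1$ is simple; second, $|\lambda_i|=\lambda_1$ for every $i$ with $\lambda_i<0$ (equality in the middle inequality), so each negative eigenvalue equals $-\lambda_1$; and third, combining this with $\mathrm{tr}(A)=\sum_i\lambda_i=0$ forces exactly one eigenvalue to equal $-\lambda_1$. Consequently the spectrum of $G$ is precisely $\{\lambda_1,\,0^{(n-2)},\,-\lambda_1\}$.

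Finally I would translate this spectral fingerprint into structure. The spectrum is symmetric about $0$, so $G$ is bipartite; and $G$ has exactly one positive eigenvalue, so by the known characterization of such graphs $G$ is complete multipartite (possibly together with isolated vertices, which are excluded in our applications since $\delta(G)=2$). A bipartite complete multipartite graph is exactly a complete bipartite graph $K_{a,b}$, and reading off $\rho(K_{a,b})=\sqrt{ab}=\lambda_1=\sqrt m$ yields $ab=m$. The converse is immediate, since $K_{a,b}$ is triangle-free with $\rho(K_{a,b})=\sqrt{ab}=\sqrt m$. The delicate point is justifying the implication ``a single positive eigenvalue $\Rightarrow$ complete multipartite''; should one wish to avoid quoting that theorem, an alternative is to work directly with the nonnegative Perron eigenvector, exploiting that in a connected triangle-free graph the neighbourhoods of adjacent vertices are disjoint to force any two vertices lying in a common part to share identical neighbourhoods, which again produces a complete bipartite structure.
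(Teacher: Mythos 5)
This lemma is not proved in the paper at all: it is quoted verbatim from Nikiforov \cite{VN1,VN2} (the inequality itself going back to Nosal \cite{Nosal}), so there is no in-paper argument to compare yours against. Your proposal is, in essence, the classical Nosal--Nikiforov proof, and it is correct: the trace identities $\mathrm{tr}(A^2)=2m$ and $\mathrm{tr}(A^3)=6t(G)=0$, the chain bounding $\lambda_1^3\le\lambda_1(2m-\lambda_1^2)$, and the equality analysis forcing the spectrum $\{\lambda_1,0^{(n-2)},-\lambda_1\}$ are all sound. Two points deserve comment. First, the step you yourself flag as delicate --- ``exactly one positive eigenvalue $\Rightarrow$ complete multipartite'' (Smith's theorem) --- is legitimate to quote, but you can bypass it and your Perron-vector alternative entirely: the spectrum $\{\lambda_1,0^{(n-2)},-\lambda_1\}$ says $A(G)$ has rank $2$, and a graph of adjacency rank $2$ is precisely a complete bipartite graph together with isolated vertices, which finishes the structure identification in one stroke. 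Second, the equality case genuinely yields $K_{a,b}$ only up to isolated vertices, since $K_{a,b}\cup kK_1$ also attains $\rho=\sqrt{m}$; you note this, and the imprecision is inherited from the statement of the lemma as the paper records it (and is harmless where the paper applies it, namely to graphs with $\delta(G)=2$), so it is not a gap in your argument.
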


\begin{lem}[\cite{Wu}]\label{le-2.9}
Let $u$, $v$ be two distinct vertices in a connected graph $G$, $\{v_i\mid i=1,2,\ldots, s\}\subseteq N_G(v)\setminus
N_G(u)$.
$X=(x_1, x_2, \ldots, x_n)^T$ is the Perron vector of $G$, where $x_i$ is corresponding to $v_i$ ($1 \leq i \leq n$).
Let $G'=G-\{vv_i\mid 1\leq i\leq s\}+\{uv_i\mid 1\leq i\leq s\}$. If $x_u\geq x_v$, then $\rho(G)<\rho(G')$.
\end{lem}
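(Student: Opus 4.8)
The plan is to prove this standard edge-shifting inequality by the variational (Rayleigh quotient) characterization of the spectral radius, testing the quadratic form of $G'$ against the Perron vector $X$ of $G$. Since $G$ is connected, the Perron--Frobenius theorem guarantees that every coordinate of $X$ is strictly positive; I would normalize so that $X^{T}X=1$, which gives $X^{T}A(G)X=\rho(G)$.

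First I would record the effect of the switch on the adjacency matrix. As $G'$ arises from $G$ by deleting the edges $vv_i$ and inserting the edges $uv_i$, and since each $v_i\in N_G(v)\setminus N_G(u)$ so that no multiple edges are created, the two quadratic forms differ only in these terms, and a direct expansion gives
\begin{equation*}
X^{T}A(G')X-X^{T}A(G)X=2\left(x_u-x_v\right)\sum_{i=1}^{s}x_{v_i}.
\end{equation*}
Because $x_u\ge x_v$ and every $x_{v_i}>0$, the right-hand side is nonnegative; hence by the Rayleigh quotient characterization
\begin{equation*}
\rho(G')\ge X^{T}A(G')X\ge X^{T}A(G)X=\rho(G).
\end{equation*}

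The main obstacle is promoting this to the strict inequality claimed, i.e. excluding $\rho(G')=\rho(G)$. I would argue by contradiction: if $\rho(G')=\rho(G)=:\rho$, then the chain of inequalities above collapses, forcing $X^{T}A(G')X=\rho=\rho(G')\,X^{T}X$, so the unit vector $X$ attains the maximum Rayleigh quotient for the symmetric matrix $A(G')$ and is therefore an eigenvector of $A(G')$ for its largest eigenvalue $\rho$. (This step uses only the spectral theorem, so no connectivity of $G'$ has to be assumed.) Reading the eigenequation $A(G')X=\rho X$ at the coordinate $u$ and subtracting the corresponding identity $A(G)X=\rho X$ for $G$, and noting that $N_{G'}(u)$ exceeds $N_{G}(u)$ precisely by $\{v_1,\ldots,v_s\}$, I obtain $\sum_{i=1}^{s}x_{v_i}=0$. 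This contradicts the positivity $x_{v_i}>0$, so $\rho(G')>\rho(G)$. (When $x_u>x_v$ the displayed difference is already strictly positive, so this contradiction argument is needed only in the borderline case $x_u=x_v$.)
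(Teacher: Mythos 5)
Your proof is correct. Note that the paper itself gives no proof of this lemma: it is quoted as a known result from the cited reference (Wu, Xiao and Hong), so there is nothing in the paper to compare against; your argument — the Rayleigh quotient bound $\rho(G')\ge X^{T}A(G')X=\rho(G)+2(x_u-x_v)\sum_{i=1}^{s}x_{v_i}\ge\rho(G)$, followed by the equality analysis showing $X$ would have to be an eigenvector of $A(G')$, whence $\sum_{i=1}^{s}x_{v_i}=0$ contradicts Perron positivity — is precisely the standard proof found in that literature, and your handling of the borderline case $x_u=x_v$ (where strictness must come from the eigenequation comparison rather than the quadratic form) is the essential point and is done correctly.
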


\begin{lem}[\cite{Cvetkovi}]\label{7}
Let $G$ be a connected graph with Perron vector $X=(x_1, x_2,\ldots, x_n)^T$. Let $U$, $V$ and $W$ be three disjoint subsets of $V(G)$ such that there are no edges between $V$ and $W$. Let $G'$ be the graph obtained from $G$ by deleting the edges between $V$ and $U$, and adding all the edges between $V$ and $W$. If $\sum_{u\in U}x_u\le\sum_{w\in W}x_w$, then
$\rho(G)<\rho(G')$.
\end{lem}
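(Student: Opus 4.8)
The plan is to prove this by the variational (Rayleigh quotient) characterisation of the spectral radius, exploiting the fact that $X$ is the Perron vector of $G$. Since $G$ is connected, Perron--Frobenius guarantees that $X>0$ entrywise and that $\rho(G)=\dfrac{X^{T}A(G)X}{X^{T}X}$. For the transformed graph $G'$ the same vector $X$ need not be an eigenvector, so the variational principle only gives $\rho(G')\ge\dfrac{X^{T}A(G')X}{X^{T}X}$. Consequently it suffices to compare the two quadratic forms evaluated at the single vector $X$: if I can show $X^{T}A(G')X\ge X^{T}A(G)X$, then $\rho(G')\ge\rho(G)$, and the strict inequality will be handled separately.

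First I would compute the difference of the quadratic forms. Using $X^{T}A(G)X=2\sum_{ij\in E(G)}x_ix_j$ and recalling that passing from $G$ to $G'$ deletes exactly the edges between $V$ and $U$ and inserts \emph{every} edge between $V$ and $W$ (there were none to begin with), I get
\[
\tfrac12\bigl(X^{T}A(G')X-X^{T}A(G)X\bigr)=\Bigl(\sum_{v\in V}x_v\Bigr)\Bigl(\sum_{w\in W}x_w\Bigr)-\sum_{\substack{v\in V,\ u\in U\\ vu\in E(G)}}x_vx_u.
\]
Bounding the deleted sum crudely by $\bigl(\sum_{v\in V}x_v\bigr)\bigl(\sum_{u\in U}x_u\bigr)$ (a subset of all $V$--$U$ pairs, and $X>0$) and then invoking the hypothesis $\sum_{u\in U}x_u\le\sum_{w\in W}x_w$ yields
$\bigl(\sum_{v\in V}x_v\bigr)\bigl(\sum_{w\in W}x_w-\sum_{u\in U}x_u\bigr)\ge0$, whence $\rho(G')\ge\rho(G)$.

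For strictness I would argue by contradiction: suppose $\rho(G')=\rho(G)$. Then both inequalities above are tight, and in particular $\dfrac{X^{T}A(G')X}{X^{T}X}=\rho(G')$ equals the largest eigenvalue of $A(G')$, so by the equality case of the Rayleigh principle $X$ is a $\rho(G')$-eigenvector of $A(G')$. Then $A(G')X=\rho(G)X=A(G)X$, i.e. $(A(G')-A(G))X=0$. Reading off the coordinate of this identity at any $w\in W$ is the decisive move: the only changes in row $w$ are the newly added $+1$ entries to the vertices of $V$ (no edge incident to $w$ is deleted, since deletions occur only among $V$--$U$ and $w\notin V\cup U$), so $\bigl[(A(G')-A(G))X\bigr]_w=\sum_{v\in V}x_v=0$, forcing $V=\emptyset$ because $X>0$; but then no edge is added and the operation is vacuous. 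Hence in the non-trivial case $\rho(G)<\rho(G')$, as claimed.

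The easy part is the weak inequality $\rho(G')\ge\rho(G)$, which drops out of the Rayleigh quotient. The main obstacle is upgrading this to the \emph{strict} inequality: this requires the eigenvector-equation analysis at a vertex of $W$ and crucially uses the entrywise positivity of the Perron vector (hence the connectivity of $G$) to conclude $V=\emptyset$. One must also note the implicit convention that "adding all edges between $V$ and $W$'' presupposes $V$ and $W$ are nonempty (equivalently, that the transformation genuinely changes the graph; note that if $W=\emptyset$ the hypothesis would force $U=\emptyset$ and $G'=G$), since otherwise the stated strict inequality cannot hold.
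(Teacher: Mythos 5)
Your proof is correct. The paper does not prove this lemma at all --- it is imported from the reference \cite{Cvetkovi} --- so there is no internal proof to compare against; your argument (weak inequality via the Rayleigh quotient evaluated at the Perron vector of $G$, then strictness via the equality case of the Rayleigh principle and the coordinate of $(A(G')-A(G))X$ at a vertex $w\in W$, using entrywise positivity of $X$) is precisely the standard proof of this statement in the literature. Your closing remark that the strict inequality tacitly requires $V\neq\emptyset$ and $W\neq\emptyset$ (otherwise the operation is vacuous and $G'=G$) is a fair observation about the lemma as stated, not a defect of your argument.
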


\begin{lem}[\cite{ZWG}]\label{le-2.12}
Let $(H, v)$ and $(K, w)$ be two connected rooted graphs. Then
$$\rho((H, v)*(K, w)) \le \sqrt{{\rho(H)}^2+{\rho(K)}^2},$$
the equality holds if and only if both $H$ and $K$ are stars, where $(H, v)*(K, w)$ is obtained by identifying $v$ and $w$
from disjoint union of $H$ and $K$.
\end{lem}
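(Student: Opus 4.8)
The plan is to analyse the Perron vector of the coalescence directly and to reduce everything to the Euclidean operator norm, using that for a symmetric matrix this norm equals its largest eigenvalue in absolute value, so that $\|A(H)y\|\le\rho(H)\|y\|$ for every vector $y$ (and similarly for $K$). Write $G':=(H,v)*(K,w)$, let $z$ be the vertex obtained by identifying $v$ and $w$, let $X=(x_i)$ be the Perron vector of $G'$ with $\rho:=\rho(G')$, and let $X_H$ and $X_K$ be the restrictions of $X$ to $V(H)$ and $V(K)$ respectively, both of which carry the entry $x_z$. Since every non-root vertex of $H$ retains precisely its $H$-neighbours in $G'$, the coordinates of $A(G')X$ off $K$ coincide with those of $A(H)X_H$, those off $H$ with $A(K)X_K$, and the $z$-coordinate equals $s_H+s_K$ where $s_H:=\sum_{u\in N_H(v)}x_u$ and $s_K:=\sum_{u\in N_K(w)}x_u$. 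First I would record the resulting identity
\[
\rho^2\|X\|^2=\|A(G')X\|^2=\|A(H)X_H\|^2+\|A(K)X_K\|^2+2s_Hs_K,
\]
together with the bookkeeping relations $\|X\|^2=\|X_H\|^2+\|X_K\|^2-x_z^2$ and $\|X_H^0\|^2:=\|X_H\|^2-x_z^2=\sum_{u\in V(H)\setminus\{z\}}x_u^2$ (and the analogue for $K$).

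Next I would bound the three terms intrinsically. The operator-norm inequality gives $\|A(H)X_H\|^2\le\rho(H)^2\|X_H\|^2$ and $\|A(K)X_K\|^2\le\rho(K)^2\|X_K\|^2$. For the cross term I would write $s_H=\langle\mathbf 1_{N_H(v)},X_H^0\rangle$ and apply Cauchy--Schwarz to get $s_H\le\sqrt{d_H(v)}\,\|X_H^0\|$; since the star $K_{1,d_H(v)}$ centred at $v$ is a subgraph of $H$, Lemma \ref{lem-2.6'}(i) yields $\sqrt{d_H(v)}\le\rho(H)$, whence $s_H\le\rho(H)\|X_H^0\|$ and, symmetrically, $s_K\le\rho(K)\|X_K^0\|$.

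Finally I would assemble these estimates: substituting the operator-norm bounds and the relation for $\|X\|^2$, the desired inequality $\rho^2\le\rho(H)^2+\rho(K)^2$ reduces to establishing $2s_Hs_K\le\rho(H)^2\|X_K^0\|^2+\rho(K)^2\|X_H^0\|^2$, which follows from the two cross-term bounds followed by AM--GM. The hard part will be the equality discussion. Equality forces both cross-term bounds to be tight, so in particular $\rho(H)=\sqrt{d_H(v)}$; since the star is the unique connected graph attaining $\rho=\sqrt{\Delta}$ (again via Lemma \ref{lem-2.6'}(i)), this makes $H$ a star and pins $v$ as its centre, and likewise for $(K,w)$ (here the positivity of the Perron vector, combined with tightness of Cauchy--Schwarz, is what rules out leaves outside $N_H(v)$). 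Conversely, rooting $K_{1,a}$ and $K_{1,b}$ at their centres gives $G'=K_{1,a+b}$, for which a direct computation returns $\rho(G')^2=a+b=\rho(H)^2+\rho(K)^2$. Throughout, the delicate point is keeping the shared entry $x_z$ correctly accounted for in the overlapping restrictions and making the equality analysis isolate both the star shape and the placement of the root.
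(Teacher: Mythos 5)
The paper never proves this lemma: it is imported verbatim from \cite{ZWG}, so there is no internal proof to compare against, and your argument stands as a self-contained substitute. It is correct. The key identity $\rho^2\|X\|^2=\|A(H)X_H\|^2+\|A(K)X_K\|^2+2s_Hs_K$ holds because the $z$-entries of $A(H)X_H$ and $A(K)X_K$ are exactly $s_H$ and $s_K$, so splitting $(s_H+s_K)^2$ between the two squared norms leaves precisely the cross term $2s_Hs_K$; and your reduction of the target inequality to $2s_Hs_K\le\rho(H)^2\|X_K^0\|^2+\rho(K)^2\|X_H^0\|^2$ is exact, since $\|X\|^2-\|X_H\|^2=\|X_K^0\|^2$ and $\|X\|^2-\|X_K\|^2=\|X_H^0\|^2$. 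The chain $s_H\le\sqrt{d_H(v)}\,\|X_H^0\|\le\rho(H)\|X_H^0\|$, its analogue for $K$, and AM--GM applied with $a=\rho(H)\|X_K^0\|$, $b=\rho(K)\|X_H^0\|$ then close the bound. The equality analysis also goes through: positivity of the Perron entries gives $s_H,s_K>0$, so tightness of the product bound forces $s_H=\rho(H)\|X_H^0\|$ and $s_K=\rho(K)\|X_K^0\|$ separately, and tightness of $\sqrt{d_H(v)}\le\rho(H)$ combined with Lemma \ref{lem-2.6'}(i) forces $H$ to coincide with the star $K_{1,d_H(v)}$ centred at $v$ (and likewise for $K$).

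Two points deserve explicit mention in a final write-up, though neither is a mathematical gap. First, your proof actually establishes a sharper statement than the lemma as transcribed in this paper: equality requires the roots to be the \emph{centres} of the stars, which is how \cite{ZWG} states it. As literally written above, the equality claim is false: coalescing $K_{1,2}$ rooted at a leaf with $K_2$ yields $P_4$, whose spectral radius $\frac{1+\sqrt{5}}{2}$ is strictly less than $\sqrt{3}$. Your equality analysis correctly detects this, since it pins $v$ and $w$ as centres. Second, the factor-wise equality step needs $s_H,s_K>0$, i.e.\ both $H$ and $K$ must contain an edge; the degenerate case $H\cong K_1$ (where the coalescence is just $K$ and equality is automatic) should be excluded or noted separately.
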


\section{Proof of Theorem \ref{thm-1.1}}
In this section,  we will give the proof of Theorem \ref{thm-1.1}.
Let $G^*$  attain  maximal spectral radius $\rho^*=\rho(G^*)$ among all minimally 2-connected graphs with size $m$.
Lemma \ref{le-2.1} and Lemma \ref{le-2.3}  indicate $G^*$ has no triangles and $\delta(G^*)=2$. First, by Lemma \ref{le-2.2} we claim that $G^*\cong K_{2, \frac m2}$ if  $m$ is even. In what follows, we always assume that $m$ is odd.

Next we will consider the structure of extremal graph $G^*$ for odd size $m\geq 9$.
Let $X=(x_1,x_2,\ldots,x_n)^T$ be the Perron vector of $G^*$ with coordinate $x_{u^*}=\max\{x_i\mid i\in V(G^*)\}$.
Denote by $A=N(u^*)$ and $B=V(G^*)\setminus{(A \cup {u^*})}$ .
Since $G^*$ has no triangles,  we have $e(A)=0$, that is,  $\sum_{i \in A}d_A(i)x_i=0$,
and thus
\begin{eqnarray}
{\rho^{*}}^{2}x_{u^*}&=&\sum\limits_{v \in A}{\sum\limits_{u\in N(v)}{x_u}}
=d(u^*)x_{u^*}+\sum_{i \in A}d_A(i)x_i+
\sum_{i \in A}d_B(i)x_i
=d(u^*)x_{u^*}+\sum_{i \in A}d_B(i)x_i\nonumber\\
&\leq&\left(d(u^*)+e(A, B)\right)x_{u^*}
=\left( m-e(B)\right)x_{u^*}
\label{eq1}
\end{eqnarray}
By Lemma \ref{le-4.1},
${\rho^*}^2> m-2$.  Combining with (\ref{eq1}),  we get $e(B)<2$.  Thus,  $e(B)=0$ or $1$.

In the following, we will give four claims to finish our proof.
\begin{clm}\label{claim-3.1}
$d(u^*)\ge3$.
\end{clm}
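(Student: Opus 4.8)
The plan is to rule out the only remaining possibility $d(u^*)=2$, since Lemma~\ref{le-2.3} already guarantees $\delta(G^*)=2$ and hence $d(u^*)\ge 2$. I would argue by contradiction, playing an easy upper bound on $\rho^*$ in terms of $d(u^*)$ against the lower bound on the extremal spectral radius established earlier.

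First I would read off the eigenvalue equation at the vertex $u^*$ carrying the maximum Perron coordinate: $\rho^* x_{u^*}=\sum_{v\in N(u^*)}x_v$. Since $x_{u^*}=\max\{x_i\mid i\in V(G^*)\}$, every summand satisfies $x_v\le x_{u^*}$, and there are exactly $d(u^*)$ of them, so $\rho^* x_{u^*}\le d(u^*)x_{u^*}$. Dividing by $x_{u^*}>0$ yields the clean bound $\rho^*\le d(u^*)$.

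Next I would supply the matching lower bound. Because $SK_{2,\frac{m-1}{2}}$ is minimally $2$-connected and $G^*$ is chosen to maximize the spectral radius over this class, we have $\rho^*\ge\rho(SK_{2,\frac{m-1}{2}})$, and Lemma~\ref{le-2.8} gives $\rho(SK_{2,\frac{m-1}{2}})>\sqrt{m-2}$. For odd $m\ge 9$ one has $\sqrt{m-2}\ge\sqrt{7}>2$, so $\rho^*>2$.

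Combining the two estimates, the assumption $d(u^*)=2$ would force $\rho^*\le 2$, contradicting $\rho^*>2$. Hence $d(u^*)\ge 3$, as claimed. I do not expect any genuine obstacle here; the only point meriting a line of verification is that the stated hypothesis $m\ge 9$ is exactly what makes $\sqrt{m-2}>2$, so that the maximality of $G^*$ together with Lemma~\ref{le-2.8} really does push $\rho^*$ above the trivial value $2$ attainable by a degree-$2$ maximal-coordinate vertex.
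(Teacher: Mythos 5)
Your proof is correct, and it takes a genuinely different and more elementary route than the paper's. The paper argues structurally: assuming $d(u^*)=2$, it uses triangle-freeness, $\delta(G^*)=2$, and the parity of $m$ to force $e(B)=1$ and then to pin down $G^*$ completely as a specific graph on $A=\{u_1,u_2\}$ and $B=\{w_1w_2\}\cup I$; it then solves the eigen-equations of that graph and derives $x_{u^*}<x_{u_1}$, contradicting the choice of $u^*$. You instead use the standard observation that at a vertex carrying the maximum Perron coordinate the eigen-equation gives $\rho^* x_{u^*}=\sum_{v\in N(u^*)}x_v\le d(u^*)x_{u^*}$, hence $\rho^*\le d(u^*)$, and play this against the lower bound $\rho^*\ge\rho(SK_{2,\frac{m-1}{2}})>\sqrt{m-2}>2$ coming from the maximality of $G^*$ and Lemma~\ref{le-2.8}. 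Your argument is shorter, needs none of the preliminary structural facts (not even $e(B)\le 1$ or the absence of triangles), and works verbatim for any odd $m\ge 7$; the paper's computation, by contrast, is tailored to the fully determined candidate graph and is heavier than necessary for this claim, though the structural bookkeeping it sets up (the sets $A$, $B$ and the bound on $e(B)$) is reused in the subsequent claims, so the paper loses little by routing the argument that way.
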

\begin{proof}
Otherwise, $d(u^*)\le 2$.
Since $G^*$ is minimally 2-connected,  we have $d(u^*)\ge \delta(G^*)=2$ by Lemma \ref{le-2.3}.  This induces $d(u^*)=2$
and so $|A|=2$.
We may assume $A=\{u_1, u_2 \}$.
Note that  $e(B)=0$ or $1$.
If  $e(B)=0$  then each vertex in $B$  is adjacent with both of $u_1$ and $u_2$ since $\delta(G^*)=2$.
This leads to the size of $G^*$ is even,  a contradiction.
Thus $e(B)=1$, and   we may assume $B=\{w_1w_2\}\cup I$,  where $I=\{v_1,v_2,\ldots,  v_t\}$ is  isolated vertices set.
Moreover, we see that each $v_i$ is adjacent to $u_j$ for  $j=1,2$  due to $\delta(G^*)=2$, and $N_{A}(w_1)\cap N_{A}(w_2)=\emptyset$,  $d_{A}(w_1)=d_{A}(w_2)=1$
since $G$ has no triangles.  Without loss of generality, let  $N_A(w_1)=u_1$ and $N_A(w_2)=u_2$. Now $G^*$ is determined and $m=2t+5$ in this situation, where $t\ge 2$.
By the symmetry,  we have $x_{u_1}=x_{u_2}$, $x_{w_1}=x_{w_2}$, and $x_{u^*}=x_{v_i}$ for $i=1, 2, \cdots, t$.
Thus from  $A(G^*)X=\rho^*X$, we have
 \begin{eqnarray*}\label{eq2}
 \left\{\begin{array}{ll}
 \rho^*x_{u^*}&=2x_{u_1},\\
 \rho^*x_{u_1}&=(t+1)x_{u^*}+x_{w_1},\\
 \rho^*x_{w_1}&=x_{w_2}+x_{u_1}=x_{w_1}+x_{u_1}.
 \end{array}\right.
 \end{eqnarray*}
Furthermore, we get
 \begin{equation}\label{eq3}
 ({\rho^*}^2-2t-2)x_{u^*}=\frac{1}{\rho^*-1}x_{u_1}.
 \end{equation}
Let $g(\rho^*)=({\rho^*}^2-2t-2)(\rho^*-1)-1={\rho^*}^3-{\rho^*}^2-(2t+2)\rho^*+2t+1$.
Then $g(\rho^*)={\rho^*}^3-{\rho^*}^2-(m-3)\rho^*+m-4$. Since  $\rho^*>\sqrt{m-2}$ by Lemma \ref{le-4.1} and
$$g'(\rho^*)=3{\rho^*}^2-2\rho^*-(m-3)>g'(\sqrt{m-2})=2m-2\sqrt{m-2}-3>0,$$
 we have  $g(\rho^*)>g(\sqrt{m-2})=\sqrt{m-2}-2 >0$.
Thus ${\rho^*}^2-2t-2>\frac{1}{\rho^*-1} $.
Therefore, from (\ref{eq3}) we get $x_{u^*}<x_{u_1}$,  which contradicts  the maximality of
$x_{u^*}$.
\end{proof}

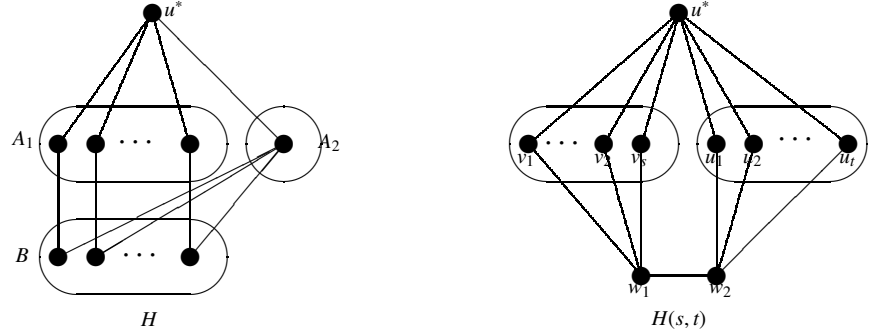
\begin{figure}[h]
\centering \setlength{\unitlength}{2.4pt}
\begin{center}
\unitlength 2.500mm 
\linethickness{0.4pt}
\ifx\plotpoint\undefined\newsavebox{\plotpoint}\fi 
\begin{picture}(44.932,17.337)(0,0)
\put(5.932,9.337){\oval(10,4)[]}
\put(5.932,3.337){\oval(10,4)[]}
\put(13.932,9.337){\oval(4,4)[]}
\put(1.932,9.337){\line(0,-1){6}}
\put(3.932,9.337){\line(0,-1){6}}
\put(8.932,9.337){\line(0,-1){6}}
\put(1.932,3.337){\line(2,1){12}}
\put(13.932,9.337){\line(-5,-3){10}}
\put(3.932,3.337){\line(0,1){0}}
\put(8.932,3.337){\line(5,6){5}}
\put(13.932,9.337){\line(-1,1){7}}
\multiput(6.932,16.337)(.03333333,-.11666667){60}{\line(0,-1){.11666667}}
\multiput(3.932,9.337)(.03370787,.07865169){89}{\line(0,1){.07865169}}
\multiput(6.932,16.337)(-.033557047,-.046979866){149}{\line(0,-1){.046979866}}
\put(30.432,9.337){\oval(9,4)[]}
\put(40.432,9.337){\oval(9,4)[]}
\multiput(26.932,9.337)(.038461538,.033653846){208}{\line(1,0){.038461538}}
\put(28.932,9.337){\line(0,1){0}}
\multiput(34.932,16.337)(-.03333333,-.11666667){60}{\line(0,-1){.11666667}}
\put(32.932,9.337){\line(0,-1){7}}
\put(32.932,2.337){\line(1,0){4}}
\put(36.932,2.337){\line(0,1){7}}
\multiput(36.932,9.337)(-.03333333,.11666667){60}{\line(0,1){.11666667}}
\multiput(34.932,16.337)(.033613445,-.058823529){119}{\line(0,-1){.058823529}}
\multiput(38.932,9.337)(-.03333333,-.11666667){60}{\line(0,-1){.11666667}}
\put(36.932,2.337){\line(1,1){7}}
\multiput(43.932,9.337)(-.043269231,.033653846){208}{\line(-1,0){.043269231}}
\multiput(32.932,2.337)(-.033707865,.039325843){178}{\line(0,1){.039325843}}
\multiput(34.932,16.337)(-.033613445,-.058823529){119}{\line(0,-1){.058823529}}
\multiput(30.932,9.337)(.03333333,-.11666667){60}{\line(0,-1){.11666667}}
\put(32.932,2.337){\line(0,1){0}}
\put(6.932,16.337){\circle*{1}}
\put(1.932,9.337){\circle*{1}}
\put(3.932,9.337){\circle*{1}}
\put(8.932,9.337){\circle*{1}}
\put(13.932,9.337){\circle*{1}}
\put(1.932,3.337){\circle*{1}}
\put(3.932,3.337){\circle*{1}}
\put(8.932,3.337){\circle*{1}}
\put(34.932,16.337){\circle*{1}}
\put(26.932,9.337){\circle*{1}}
\put(30.932,9.337){\circle*{1}}
\put(32.932,9.337){\circle*{1}}
\put(36.932,9.337){\circle*{1}}
\put(38.932,9.337){\circle*{1}}
\put(43.932,9.337){\circle*{1}}
\put(8.1,16.575){\makebox(0,0)[cc]{\scriptsize$u^*$}}
\put(36.1,16.575){\makebox(0,0)[cc]{\scriptsize$u^*$}}
\put(.074,9.588){\makebox(0,0)[cc]{\scriptsize$A_1$}}
\put(16.352,9.439){\makebox(0,0)[cc]{\scriptsize$A_2$}}
\put(0,3.419){\makebox(0,0)[cc]{\scriptsize$B$}}
\put(6.764,0){\makebox(0,0)[cc]{\scriptsize$H$}}
\put(34.933,0){\makebox(0,0)[cc]{\scriptsize$H(s,t)$}}
\put(26.832,8.5){\makebox(0,0)[cc]{\scriptsize$v_1$}}
\put(32.852,8.5){\makebox(0,0)[cc]{\scriptsize$v_s$}}
\put(30.919,8.5){\makebox(0,0)[cc]{\scriptsize$v_2$}}
\put(36.866,8.5){\makebox(0,0)[cc]{\scriptsize$u_1$}}
\put(38.872,8.5){\makebox(0,0)[cc]{\scriptsize$u_2$}}
\put(43.926,8.5){\makebox(0,0)[cc]{\scriptsize$u_t$}}
\put(32.852,1.56){\makebox(0,0)[cc]{\scriptsize$w_1$}}
\put(37.163,1.56){\makebox(0,0)[cc]{\scriptsize$w_2$}}
\put(32.932,2.337){\circle*{1}}
\put(36.932,2.337){\circle*{1}}
\put(28.838,9.216){\makebox(0,0)[cc]{$\cdots$}}
\put(41.251,9.439){\makebox(0,0)[cc]{$\cdots$}}
\put(6.169,9.291){\makebox(0,0)[cc]{$\cdots$}}
\put(6.392,3.27){\makebox(0,0)[cc]{$\cdots$}}
\end{picture}
\end{center}
\caption{\footnotesize{The graphs $H$ and $H(s,t)$, where $s,t\ge 1$.}}\label{fig-2}
\end{figure}

\begin{clm}\label{claim-3.2}
$e(B)=1$.
\end{clm}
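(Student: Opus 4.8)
The plan is to argue by contradiction, assuming $e(B)=0$ and exploiting the resulting (near-)bipartite structure together with the forbidden-chord characterisation of minimally $2$-connected graphs. Recall we already know $e(A)=0$ (triangle-freeness, Lemma~\ref{le-2.1}) and $\delta(G^*)=2$ (Lemma~\ref{le-2.3}). If $e(B)=0$, then every vertex of $B$ has all of its (at least two) neighbours inside $A$, and every vertex of $A$ must have at least one neighbour in $B$, since otherwise such a vertex would have degree $1$. Thus all edges run between $\{u^*\}\cup B$ and $A$, and I would record the edge count $m=d(u^*)+e(A,B)$.

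Next I would split into two cases according to $\max_{a\in A} d_B(a)$. The easy case is when $d_B(a)=1$ for every $a\in A$: then $e(A,B)=\sum_{a\in A}d_B(a)=|A|=d(u^*)$, so $m=2d(u^*)$ is even, contradicting the standing assumption that $m$ is odd. All the work therefore lies in the remaining case, where some $a^\circ\in A$ has $d_B(a^\circ)\ge 2$, and hence $d(a^\circ)\ge 3$.

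The main step is to extract degree information from short cycles through $u^*$. For any $b\in N_B(a^\circ)$ I pick a second neighbour $a'\ne a^\circ$ of $b$ (it exists since $d(b)\ge 2$, and it lies in $A$). Then $u^*a^\circ b a'u^*$ is a $4$-cycle, and since both $u^*$ (by Claim~\ref{claim-3.1}) and $a^\circ$ have degree at least $3$, Lemma~\ref{le-2.4} forces the other two vertices to have degree $2$; in particular $d(a')=2$, so $a'$ has a unique neighbour in $B$, namely $b$. Applying this to two distinct neighbours $b_1,b_2\in N_B(a^\circ)$ yields second neighbours $a_1',a_2'$ that must be distinct (a common value would have two neighbours in $B$, contradicting $d_B=1$). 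Then $u^*a_1'b_1a^\circ b_2a_2'u^*$ is a $6$-cycle whose edge $u^*a^\circ$ is a chord, contradicting Lemma~\ref{le-2.5}. Hence $e(B)\ne 0$, and with $e(B)\in\{0,1\}$ already established we conclude $e(B)=1$.

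The step I expect to be delicate is closing off the possibility that the two chosen second neighbours coincide, i.e.\ that $b_1,b_2$ both attach to the same $a'$. The point is that one cannot rule this out from the $4$-cycle $a^\circ b_1 a' b_2$ inside $A\cup B$ alone, since that cycle happily has its two degree-$2$ vertices among $b_1,b_2$. Routing the degree-$2$ argument through $u^*$ instead --- which is exactly where Claim~\ref{claim-3.1} is needed, to guarantee $u^*$ is not itself a degree-$2$ vertex of the $4$-cycle --- pins down $d(a')=2$ and thereby forces $a_1'\ne a_2'$, after which the chorded $6$-cycle appears automatically.
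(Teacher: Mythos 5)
Your proof is correct, and it takes a genuinely different route from the paper's. The paper also assumes $e(B)=0$, but it then determines the whole structure of $G^*$: applying Lemma \ref{le-2.4} to the $4$-cycles through $u^*$ together with a cut-vertex argument, it shows that every vertex of $B$ has degree two, with one neighbour of degree two in $A$ and one neighbour of degree at least three in $A$, and that this latter ``hub'' is the same vertex for all of $B$; hence $G^*\cong H$ (Fig.\ref{fig-2}), the book of $(m-1)/3$ quadrilaterals sharing the edge from $u^*$ to the hub. It then computes $\rho(H)=1+\sqrt{\frac{m-1}{3}}<\sqrt{m-2}$ for $m\ge 9$, contradicting Lemma \ref{le-4.1}. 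You instead split on $\max_{a\in A}d_B(a)$ and settle both cases combinatorially: when every vertex of $A$ has exactly one neighbour in $B$ you get the parity contradiction $m=2d(u^*)$, and when some $a^\circ$ has two neighbours in $B$ you produce a chorded $6$-cycle, contradicting Lemma \ref{le-2.5}. The two arguments share one kernel --- Lemma \ref{le-2.4} applied to a $4$-cycle through $u^*$, which bites precisely because Claim \ref{claim-3.1} gives $d(u^*)\ge 3$ --- but your version buys several things: it needs no quotient matrix or eigenvalue estimate, it invokes $m\ge 9$ only through Claim \ref{claim-3.1} (so it works for every odd $m$ for which that claim is available), and it is shorter. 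In fact your Case 2 proves something the paper does not remark on: the graph $H$ itself, as soon as it has at least two quadrilateral pages, contains the chorded $6$-cycle $u^*a_1'b_1a^\circ b_2a_2'$ with chord $u^*a^\circ$, so $H$ is not minimally $2$-connected at all; thus the paper's structural conclusion $G^*\cong H$ is already a contradiction on its own, and the subsequent spectral computation, while logically valid as a route to the contradiction, is redundant. Your argument makes that shortcut explicit.
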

\begin{proof}
Suppose to the contrary that $e(B)=0$,  that is, $B$ induces an independent set.
Recall that $A$ is independent,
for $u\in B$, it lies in a $4$-cycle $C=(u,v_1,u^*,v_2)$ where $v_1,v_2\in A$. There is at least one of $v_1$ and $v_2$ having degree three since otherwise $u^*$ will be a cut vertex. Also there is at least one of  $v_1$ and $v_2$ having degree two since otherwise $C$ contains at most one vertex of degree two, which contradicts Lemma \ref{le-2.4}. It implies that $d(u)=2$ due to $d(u^*)\ge 3$, and then we may assume that $d(v_1)=2$ and $d(v_2)\ge3$. In addition, the vertex as $v_2$ in $A$ is unique, because  two vertices ( in $A$ ) of degree greater than $2$ must lie in a $4$-cycle along with $u^*$ and in this cycle there have been  three vertices of degree greater than $2$. Therefore,  $G^*$ is isomorphic to $ H$ showed in Fig.\ref{fig-2}.
By quotient matrix of $H$, we have $\rho^*=1+\sqrt{\frac{m-1}{3}}< \sqrt{m-2}$ for $m \ge 9$, which contradicts Lemma \ref{le-4.1}. Thus
$e(B)=1$.
\end{proof}

\begin{clm}\label{claim-3.3}
$G^*[B]=K_2$.
\end{clm}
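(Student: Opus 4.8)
The plan is to argue by contradiction. Recall from Claim~\ref{claim-3.2} that $e(B)=1$, so I may write $B=\{w_1,w_2\}\cup I$, where $w_1w_2$ is the unique edge of $G^*[B]$ and $I$ is an independent set in $G^*[B]$. Since $u^*\notin N(v)$ and the only edge inside $B$ joins $w_1$ and $w_2$, every $v\in I$ satisfies $N(v)\subseteq A=N(u^*)$, and by Lemma~\ref{le-2.3} (as $\delta(G^*)=2$) we have $d_A(v)=d(v)\ge 2$. Thus $N(v)\subseteq N(u^*)$, and since $x_{u^*}$ is maximal, $x_{u^*}\ge x_v$. The goal is to prove $I=\emptyset$, so I suppose $I\neq\emptyset$.

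First I would pin down the local structure around a vertex $v\in I$. For any two neighbours $a_i,a_j\in N(v)\subseteq A$, the four vertices $u^*,a_i,v,a_j$ span a chordless $4$-cycle (chordless because $A$ is independent and $u^*\not\sim v$). Applying Lemma~\ref{le-2.4} to this cycle, and using $d(u^*)\ge 3$ from Claim~\ref{claim-3.1}, at least two of $a_i,v,a_j$ have degree $2$; in particular, when $d(v)\ge 3$ every neighbour of $v$ must have degree $2$, so $\{u^*,v\}$ together with $N(v)$ induces a complete bipartite graph $K_{2,d(v)}$ whose $A$-side consists of degree-$2$ vertices. In other words, each $v\in I$ behaves as a secondary hub paired with $u^*$ through degree-$2$ cross-vertices. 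I would then record the analogous description for the endpoint(s) of the edge $w_1w_2$, so that $\{w_2\}\cup I$ forms the list of secondary hubs.

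The main step is a consolidation via the switching lemmas. Among the secondary hubs in $\{w_2\}\cup I$, I would choose one, say $v\in I$, of minimum Perron weight, and let $t$ be a secondary hub with $x_t\ge x_v$. Using Lemma~\ref{le-2.9} I would redirect every edge incident to $v$ onto $t$ (legitimate because the relevant neighbours are degree-$2$ cross-vertices not yet adjacent to $t$): each such cross-vertex becomes a degree-$2$ common neighbour of $u^*$ and $t$, the vertex $v$ is left isolated and deleted, the size $m$ is preserved, and $\rho$ strictly increases. Since $u^*\not\sim t$, no triangle or chord is produced, so the new graph stays triangle-free and chordless. Iterating empties $I$ while strictly increasing $\rho$, contradicting the maximality of $\rho^*$; hence $I=\emptyset$ and $G^*[B]=K_2$.

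The hard part will be the bookkeeping that keeps the switched graph minimally $2$-connected. Specifically, I must check that the redirection creates no cut vertex (so $2$-connectivity survives) and produces no chord or multi-edge; this requires care for those $v\in I$ of degree $2$ whose second $A$-neighbour may have degree $\ge 3$ and may already be adjacent to the chosen hub $t$, as well as for the interaction with the special edge $w_1w_2$ (ensuring the retained hub together with $w_1,w_2$ does not violate the chordless or degree conditions). Once $2$-connectedness and chordlessness are verified, Lemma~\ref{le-2.5} (equivalently, the criterion that every edge is incident with a degree-$2$ vertex) certifies that the resulting graph is again minimally $2$-connected, completing the contradiction.
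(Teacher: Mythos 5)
Your proposal has a genuine gap, and its main engine rests on a structural picture that cannot occur. You correctly note that if some $v\in I$ had $d(v)\ge 3$, then every neighbour of $v$ would have degree $2$ (by Lemma~\ref{le-2.4} applied to the $4$-cycles $u^*a_iva_j$, together with $d(u^*)\ge 3$). But you fail to draw the immediate consequence: such a $v$ cannot exist at all, because then every vertex of $N(v)$ is adjacent only to $u^*$ and $v$, while $w_1,w_2$ lie outside $\{u^*,v\}\cup N(v)$, so deleting $u^*$ separates $\{v\}\cup N(v)$ from $w_1w_2$ and $u^*$ is a cut vertex, contradicting $2$-connectivity. Hence there are no ``secondary hubs'' in $I$ to consolidate: every $v\in I$ has degree exactly $2$, with one neighbour in $A_1$ and one in $A_2$ (a neighbour of degree $\ge 3$). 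This is precisely the case you defer to the final ``bookkeeping'' paragraph, so the consolidation step that is supposed to carry the proof never applies, and the whole burden falls on the part you explicitly leave unresolved. In that remaining case your switch faces exactly the obstacles you list yourself ($v_2$ may already be adjacent to the target $t$; the switched graph may acquire an edge whose removal preserves $2$-connectivity, destroying minimality), and no argument is offered. Note also that your closing parenthetical is incorrect: ``every edge is incident with a vertex of degree $2$'' is only a sufficient condition for minimal $2$-connectivity, not equivalent to it; the correct equivalence is the chord criterion of Lemma~\ref{le-2.5}.

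The paper's proof shows that no switching is needed: the configuration $I\neq\emptyset$ is structurally impossible on its own. After establishing (as above) that each $v\in I$ has exactly one neighbour $v_1\in A_1$ and one neighbour $v_2\in A_2$, one gets a $4$-cycle $u^*v_1vv_2$ and, choosing $u_1\in N_A(w_1)$, $u_2\in N_A(w_2)$, a $5$-cycle $u^*u_1w_1w_2u_2$. Either these cycles meet only at $u^*$, in which case every path from $v$ to $w_1$ must pass through $u^*$ (paths avoiding $u^*$ can only alternate between $I$ and $A_2$ and never reach $w_1,w_2$), so $u^*$ is a cut vertex; or $v_2$ coincides with $u_2$, in which case the two cycles merge into a $7$-cycle with chord $u^*v_2$, contradicting Lemma~\ref{le-2.5}. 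Your spectral-monotonicity route is not salvageable as written, because any sound verification that the switched graph stays minimally $2$-connected would require the very structural analysis that already yields the contradiction directly.
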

\begin{proof}
Otherwise,  by Claim \ref{claim-3.2}, we have $G^*[B]=\{w_1w_2\}\cup I$, where $I$ is nonempty isolated vertex set.
Choosing a vertex $v\in I$.  Clearly,  $d(v)\ge2$.
If $N(v)\subset A_1=\{v\in A\mid d(v)=2\}$, then $u^*$ is a cut vertex, a contradiction.
If $N(v)\subset A_2=A\backslash A_1$,  then $v$ is included in  a $4$-cycle which has at most one  vertex
with degree two, a contradiction.
So, $|N_{A_1}(v)|=1$ and  $|N_{A_2}(v)|=1$.
Let $v_1\in N_{A_1}(v)$, $v_2\in N_{A_2}(v)$.  Then $u^*v_1vv_2$ forms a $4$-cycle.
Notice that $|N_A(w_1)|, |N_A(w_2)|\ge1$.   Choosing two vertices  $u_1\in N(w_1)$ and $u_2\in N(w_2)$,  respectively,
then $u^*u_1w_1w_2u_2$  forms a $5$-cycle.
Moreover, the $4$-cycle and $5$-cycle either have a common vertex $u^*$ or have a common edge $u^*v_2$ ($u_2$ coincides
$v_2$).
If the former occurs then  $u^*$ is a cut vertex, otherwise  $G^{*}$ contains a  chorded cycle, a contradiction.
\end{proof}

By Claims \ref{claim-3.2} and  \ref{claim-3.3}, $G^*[B]=\{w_1w_2\}$.  Since $G^*$ has no triangles,  $N_A(w_1)\cap
N_A(w_2)=\emptyset$.
Furthermore, $N_A(w_1)\cup N_A(w_2)=A$.  Assume $\left|N_A(w_1) \right|=s$, $\left|N_A(w_2) \right|=t$. Clearly,  $s, t\ge1$
since $\delta(G^*)=2$.
Let ${H}(s,t)$ be a graph obtained from  a double star graph $D_{s,t}$ by joining an isolated vertex $u^*$ to all its leaves vertices (see Fig.\ref{fig-2}).
Now we get that $G^*\cong {H}(s,t)$ for some $s$ and $t$ satisfying $2s+2t+1=m$.

\begin{clm}\label{claim-3.4}
$G^*\cong H(1,\frac{m-3}{2})$.
\end{clm}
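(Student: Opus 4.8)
The plan is to show that the unbalanced member $H(1,\frac{m-3}{2})$ strictly dominates every other $H(s,t)$ by a single edge-relocation step, invoking the Perron-vector relocation lemma (Lemma \ref{le-2.9}). First I would record that, by the analysis preceding the claim, the extremal graph is one of the graphs $H(s,t)$ with $s,t\ge 1$ and $2s+2t+1=m$, every one of which is minimally $2$-connected and has size $m$; hence it suffices to determine which pair $(s,t)$ maximizes $\rho$. Let $X$ be the Perron vector of $G^*\cong H(s,t)$. Since any two leaves attached to a common center are interchanged by an automorphism of $H(s,t)$, the coordinates of $X$ are constant on the $s$ leaves of $w_1$ and constant on the $t$ leaves of $w_2$, so the ``hub'' coordinates $x_{u^*},x_{w_1},x_{w_2}$ carry all the information I need. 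Relabelling $w_1\leftrightarrow w_2$ (equivalently $s\leftrightarrow t$) if necessary, I assume $x_{w_1}\le x_{w_2}$.

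Next I would suppose, toward a contradiction, that $s\ge 2$ and pick a leaf $\ell$ of $w_1$. Because $G^*$ is triangle-free by Lemma \ref{le-2.1}, the vertex $\ell$ is adjacent only to $u^*$ and $w_1$, so $\ell\in N(w_1)\setminus N(w_2)$. Applying Lemma \ref{le-2.9} with $u=w_2$, $v=w_1$, and the single relocated vertex $\ell$, and using $x_{w_2}\ge x_{w_1}$, I obtain that $G'=G^*-w_1\ell+w_2\ell$ satisfies $\rho(G^*)<\rho(G')$. But $G'$ is exactly $H(s-1,t+1)$: the relocated leaf $\ell$ keeps its edge to $u^*$ and gains an edge to $w_2$, thereby becoming a leaf of $w_2$, while $w_1$ retains degree $s\ge 2$ (its remaining $s-1$ leaves together with $w_2$). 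Thus $G'$ is again a minimally $2$-connected graph of size $m$ with strictly larger spectral radius, contradicting the maximality of $G^*$.

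Therefore $s\le 1$, and since $\delta(G^*)=2$ forbids $s=0$, we have $s=1$; together with $s+t=\frac{m-1}{2}$ this forces $G^*\cong H(1,\frac{m-3}{2})$. To close the loop with Theorem \ref{thm-1.1}(ii) I would note that $H(1,\frac{m-3}{2})\cong SK_{2,\frac{m-1}{2}}$: identifying $u^*$ with one part-vertex of $K_{2,\frac{m-1}{2}}$, the center $w_2$ with the other part-vertex, the $\frac{m-3}{2}$ leaves of $w_2$ with the common neighbors, and $w_1$ together with its unique leaf with the subdivided edge, yields the isomorphism. The value $\rho^*=\rho_1^*(m)$ then follows from Lemma \ref{le-2.8}.

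The only delicate points, and the main things to verify with care, are the two hypotheses of Lemma \ref{le-2.9}: that the relocated leaf genuinely lies in $N(w_1)\setminus N(w_2)$, which is guaranteed by triangle-freeness, and that after relocation the graph stays inside the admissible family of minimally $2$-connected graphs of size $m$, which is guaranteed by the degree bookkeeping above (the relocated leaf keeps degree $2$ and $w_1$ keeps degree $\ge 2$). The Perron-coordinate comparison $x_{w_1}\le x_{w_2}$ is obtained for free from the $s\leftrightarrow t$ symmetry, so no separate eigenvalue estimate is needed; the argument terminates after a single relocation rather than requiring an iteration.
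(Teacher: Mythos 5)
Your proof is correct, and its engine is the same as the paper's: a single application of Lemma \ref{le-2.9}, relocating one leaf from the center with the smaller Perron entry to the other center, which yields $H(s-1,t+1)$ --- still a minimally $2$-connected graph of size $m$ --- with strictly larger spectral radius, contradicting extremality unless $s=1$. The genuine difference lies in how the hypothesis $x_{w_1}\le x_{w_2}$ of Lemma \ref{le-2.9} is secured. The paper fixes the combinatorial normalization $s\le t$ and then must \emph{prove} that the center with fewer leaves carries the smaller Perron entry; it does so by manipulating the eigen-equations to reach $(\rho+1-\frac{s}{\rho})(y_{w_1}-y_{w_2})=(s-t)y_{u_1}\le 0$, whence $y_{w_1}\le y_{w_2}$. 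You instead normalize by the Perron vector itself: name $w_1$ the center with the smaller entry, let $s$ be \emph{its} number of leaves, and relocate. This makes the eigen-equation computation unnecessary, because the conclusion ``the center with the smaller entry has exactly one leaf,'' combined with $2s+2t+1=m$, already forces $H(1,\frac{m-3}{2})$; which of the two centers that turns out to be is irrelevant. Your version is shorter and avoids a computation; the paper's version yields the extra (but unused) fact that the smaller Perron entry sits at the center with fewer leaves. Two minor remarks: your appeal to triangle-freeness to see $\ell\notin N(w_2)$ is not needed, since in $H(s,t)$ the leaves of $w_1$ are by construction adjacent only to $w_1$ and $u^*$; and, like the paper, you assert rather than verify that $H(s-1,t+1)$ is minimally $2$-connected --- this does hold, e.g.\ by Lemma \ref{le-2.5}, since no cycle of any $H(s',t')$ with $s',t'\ge 1$ has a chord.
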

\begin{proof}
Without loss of generality, we assume $s\le t$.
If $s=1$, then $t=\frac{m-3}{2}$,  and so $H(s,t)\cong  H(1,\frac{m-3}{2})$. The result holds.
Suppose $G^*\cong H(s,t)$ for $s\ge2$.
Let $Y$ be the Perron eigenvector of $H(s, t)$ with spectral radius $\rho=\rho(H(s,t))$,  and let
$N_A(w_1)=\{v_1,\cdots,v_s\}$ and $N_A(w_2)=\{u_1, \cdots, u_t\}$.
By the symmetry, $y_{v_1}=\cdots=y_{v_s}$ and $y_{u_1}=\cdots=y_{u_t}$.
We have
\begin{eqnarray*}
\left\{\begin{array}{ll}\rho y_{v_1}&=y_{u^*}+y_{w_1},\\ \rho y_{u_1}&=y_{u^*}+y_{w_2},\end{array}\right.&
\left\{\begin{array}{ll}\rho y_{w_1}&=sy_{v_1}+y_{w_2},\\ \rho y_{w_2}&=ty_{u_1}+y_{w_1}.\end{array}\right.
\end{eqnarray*}
Then $y_{v_1}-y_{u_1}=\frac{1}{\rho}(y_{w_1}-y_{w_2})$ and then
$$(\rho+1)(y_{w_1}-y_{w_2})=sy_{v_1}-ty_{u_1}=\frac{s}{\rho}(y_{w_1}-y_{w_2})+(s-t)y_{u_1},$$
which indicates that
$$(\rho+1-\frac{s}{\rho})(y_{w_1}-y_{w_2})=(s-t)y_{u_1}\le0.$$
Clearly,  $\rho>\frac{s}{\rho}$ since $H(s,t)$ has  $K_{1,s}$ as a subgraph.
It follows that $y_{w_1} \le y_{w_2}$.
Note that $s\ge 2$,
obviously,  $G'=H(s,t)-v_sw_1+v_sw_2$ is also a minimally $2$-connected graph.
By Lemma \ref{le-2.9},  we have  $\rho(G')>\rho(G^*)$,  which is  a contradiction.
\end{proof}

By the definition of $SK_{2,\frac{m-1}{2}}$ and $H(1,\frac{m-3}{2})$,  it is clear that $SK_{2,\frac{m-1}{2}}\cong
H(1,\frac{m-3}{2})$. Thus
Claim \ref{claim-3.4} and Lemma \ref{le-2.8} imply the Theorem \ref{thm-1.1}.

\section{Proof of Theorem \ref{thm-1.2}}
In this section, we will give the proof of Theorem \ref{thm-1.2}.
As we know,
a \emph{block} of a graph is a maximal 2-connected subgraph with respect to  vertices.
A block of a graph is called \emph{leaf block} if it contains exactly one cut vertex.
By Lemma \ref{le-2.10},  a minimally 2-edge-connected graph $G$ without cut vertex is minimally 2-connected. Otherwise,  $G$ is made of some    blocks including at least two leaf blocks, in which each block is   minimally 2-connected by Lemma \ref{le-2.10} and they  intersect at cut vertices.
In general,  we write $G=B(t,k)$ to denote a minimally 2-edge-connected graph  with  $t$ cut vertices and $k$  blocks. If $t=0$ then $k=1$ and $G=B(0,1)$ is a type of minimally 2-connected graph that is considered in Theorem \ref{thm-1.1}. If $t>1$ then $k\ge 2$ and each block of  $G=B(t,k)$ has some cut vertices, in this case  $G=B(t,k)$ can be viewed as a tree if each block is regarded as an edge.

\begin{proof}[\bf{Proof of Theorem \ref{thm-1.2} (i)}]
We may assume that  $m\ge4$ and $X=(x_1,\ldots,x_n)^T$ is  the Perron eigenvector of $G$.

{\bf Case 1.} $G$ has no cut vertices.

In this case, $G$ is minimally 2-connected.
By Theorem \ref{thm-1.1},  $\rho(G)\le \sqrt{m}$ and  the equality holds if and only if $G\cong K_{2,\frac m2}$, which are just required.

{\bf Case 2.} $G$ has some cut vertices.

By definition,  $G=B(t, k)$ for some $t\ge 1$ and $k\ge 2$.
Let $B_1,\ldots,B_k$ be its $k$  blocks and $m(B_i)=m_i$ for $i=1, 2,\ldots, k$. We know that each $B_i$ is minimally 2-connected graph and $m=m(B(t, k))=\sum_{i=1}^k m_i$.
By Theorem \ref{thm-1.1},   $\rho(B_i) \le \sqrt{m_i}$, and the equality holds if and only if $B_i\cong K_{2,\frac
{m_i}{2}}$ for $i=1,2, \ldots, k$.
Notice that each $B_i$ is not a star since $B_i$ is minimally 2-connected. By Lemma \ref{le-2.12},  we obtain
$$\rho(B(t, k)) <\sqrt{\rho^2(B_1)+\rho^2(B_2)+\cdots+\rho^2(B_k)}\le\sqrt{m_1+m_2+\cdots+m_k}=\sqrt{m},$$
as desired.
\end{proof}
In what follows we will show (ii) of Theorem \ref{thm-1.2}, and first we give some lemmas and propositions for the preparations.
Clearly, if we transfer a leaf block of  $B(t, k)$ to another leaf block,  one can simply  verify the following result.
\begin{lem}\label{block-2}
For a minimally 2-edge-connected graph $G=B(t, k)$,  let $B_i$ be a leaf block of $G$ and $u\in B_i$
be a cut vertex. For any $v\in V(G)\setminus\{u\}$,
we have
$G'=G-\sum_{w\in N(u)\cap B_i}wu+\sum_{w\in N(u)\cap B_i}wv$ is also minimally $2$-edge-connected.
\end{lem}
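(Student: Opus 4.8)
The plan is to argue that the operation merely detaches the leaf block $B_i$ at its cut vertex $u$ and re-attaches it at $v$, so that $G'$ has, up to isomorphism, exactly the same blocks as $G$; minimal $2$-edge-connectivity then transfers from $G$ to $G'$ because it is a purely block-local property. Throughout I take $v\notin V(B_i)$, which is the ``transfer'' situation in the intended application (and keeps $G'$ simple); since $B_i$ is a leaf block, $u$ is its unique cut vertex, so every vertex of $V(B_i)\setminus\{u\}$ is private to $B_i$.

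First I would describe $G'$ explicitly. Let $G_0$ be the subgraph of $G$ obtained by deleting $V(B_i)\setminus\{u\}$; equivalently $G_0$ is the union of all blocks of $G$ other than $B_i$. The operation deletes only edges inside $B_i$ (those from $u$ to $N(u)\cap V(B_i)$) and adds edges from $v$ to the very same vertices, so $G_0$ is untouched in $G'$. Let $\hat B_i$ be the subgraph of $G'$ induced on $(V(B_i)\setminus\{u\})\cup\{v\}$; it is exactly $B_i$ with $u$ relabelled as $v$, hence $\hat B_i\cong B_i$ is $2$-connected and meets $G_0$ only in $v$. Thus the blocks of $G'$ are the blocks of $G_0$ together with $\hat B_i$, i.e. up to isomorphism the blocks $B_1,\dots,B_k$ of $G$, with $v$ now playing the role of $u$ in $B_i$.

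Next I would record why each block is ``good''. Every block of $G$ is a $2$-edge-connected subgraph of the minimally $2$-edge-connected graph $G$, so by Lemma \ref{le-2.11} it is itself minimally $2$-edge-connected (and minimally $2$-connected by Lemma \ref{le-2.10}). Since $G'$ has the same blocks up to isomorphism, every block of $G'$ is likewise minimally $2$-edge-connected. I stress that I use minimal $2$-\emph{edge}-connectivity of the blocks here, not merely minimal $2$-connectivity, since the latter alone would not suffice.

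Finally I would invoke block-locality to assemble these facts. Because each edge lies in a unique block and any two distinct blocks share at most one (cut) vertex, deleting an edge $e$ of a block $B$ cannot be repaired through the rest of the graph: $G'-e$ is $2$-edge-connected if and only if $B-e$ is. In particular, since all blocks of $G'$ are $2$-connected, $G'$ is connected and bridgeless, hence $2$-edge-connected; and since each block is minimally $2$-edge-connected, $B-e$ fails to be $2$-edge-connected for every edge $e$, whence so does $G'-e$. Therefore $G'$ is minimally $2$-edge-connected. I expect the main obstacle to be this locality step---carefully checking that the effect of an edge deletion is confined to its own block (so that minimality is determined by the blocks and not by how they are glued)---together with the verification that $\hat B_i$ is genuinely a block of $G'$ attached only at $v$, both of which hinge on $B_i$ being a leaf block so that $V(B_i)\setminus\{u\}$ is private to $B_i$.
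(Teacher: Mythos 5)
Your proof is correct, and it supplies an argument where the paper offers none: Lemma \ref{block-2} is introduced only with the remark that ``one can simply verify'' it after transferring a leaf block, so the block-decomposition argument you give is precisely the verification the authors leave out. Each step is sound: $G_0$ is untouched and $\hat B_i\cong B_i$ meets the rest of $G'$ only at $v$ (this uses that $B_i$ is a leaf block, so its non-cut vertices have all their neighbours inside $B_i$); every block of $G$ is $2$-connected (a $2$-edge-connected graph has no bridges, hence no $K_2$-blocks) and therefore minimally $2$-edge-connected by Lemma \ref{le-2.11}; and the locality step holds because every cycle of a graph lies inside a single block, so if $e$ lies in the block $B$ of $G'$, a bridge $f$ of the connected graph $B-e$ remains a bridge of $G'-e$ (any cycle of $G'-e$ through $f$ would lie in the unique block containing $f$, namely $B$, and would avoid $e$). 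Note that only this one direction of your ``if and only if'' is actually needed.

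One point deserves emphasis: your standing assumption $v\notin V(B_i)$ is not merely a convenience, it is necessary. As literally stated (``any $v\in V(G)\setminus\{u\}$'') the lemma is false: if $v$ is a private vertex of $B_i$, the operation can create multiple edges or even disconnect the graph --- take $G$ to be a $6$-cycle and a triangle glued at $u$, let $B_i$ be the $6$-cycle and $v$ the vertex antipodal to $u$ on it; then $G'$ splits into two components. Since every application of Lemma \ref{block-2} in the paper has $v$ outside $B_i$, your restriction is the correct reading of the statement, and with it your proof is complete.
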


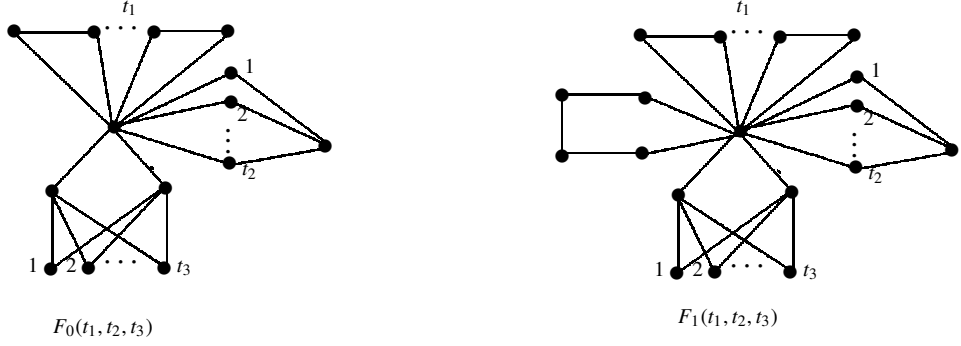
\begin{figure}[h]
\centering \setlength{\unitlength}{2.0pt}
\begin{center}
\unitlength 2.6000mm 
\linethickness{0.6pt}
\ifx\plotpoint\undefined\newsavebox{\plotpoint}\fi 
\begin{picture}(52.366,16.381)(0,0)
\multiput(41.239,9.929)(-.034655405,.033648649){148}{\line(-1,0){.034655405}}
\put(36.11,14.909){\line(1,0){4.237}}
\multiput(40.347,14.983)(.03272,-.1992){25}{\line(0,-1){.1992}}
\multiput(41.165,10.003)(.03356452,.08032258){62}{\line(0,1){.08032258}}
\put(43.246,14.983){\line(1,0){3.939}}
\multiput(47.185,14.909)(-.041232877,-.03360274){146}{\line(-1,0){.041232877}}
\multiput(41.165,9.855)(-.08032258,.03356452){62}{\line(-1,0){.08032258}}
\put(36.185,11.936){\line(-1,0){4.088}}
\put(32.097,11.862){\line(0,-1){2.899}}
\put(32.097,8.963){\line(1,0){4.088}}
\multiput(36.185,8.889)(.1654194,.0335484){31}{\line(1,0){.1654194}}
\put(41.313,9.929){\line(0,1){0}}
\put(32.023,8.963){\line(0,1){0}}
\multiput(43.097,8.071)(.0298,-.0298){5}{\line(1,0){.0298}}
\put(40.198,7.03){\line(0,1){0}}
\multiput(41.165,10.003)(.07086047,.0337093){86}{\line(1,0){.07086047}}
\multiput(47.259,12.902)(.044070796,-.033539823){113}{\line(1,0){.044070796}}
\multiput(52.239,9.112)(-.07324638,.0333913){69}{\line(-1,0){.07324638}}
\multiput(47.185,11.416)(-.156075,-.03345){40}{\line(-1,0){.156075}}
\multiput(40.942,10.078)(.10763793,-.03332759){58}{\line(1,0){.10763793}}
\multiput(47.185,8.145)(.1844444,.033037){27}{\line(1,0){.1844444}}
\put(41.685,14.983){\makebox(0,0)[cc]{$\cdots$}}
\put(36.11,14.983){\circle*{.7}}
\put(40.198,14.909){\circle*{.7}}
\put(43.246,14.909){\circle*{.7}}
\put(47.036,14.983){\circle*{.7}}
\put(32.097,11.936){\circle*{.7}}
\put(36.333,11.787){\circle*{.7}}
\put(32.097,8.814){\circle*{.7}}
\put(36.185,8.963){\circle*{.7}}
\put(47.111,8.22){\circle*{.7}}
\put(47.185,11.341){\circle*{.7}}
\put(47.185,12.828){\circle*{.7}}
\put(52.016,9.112){\circle*{.7}}
\put(48.151,13.199){\makebox(0,0)[cc]{\scriptsize$1$}}
\put(47.78,10.747){\makebox(0,0)[cc]{\scriptsize$2$}}
\put(48.151,7.822){\makebox(0,0)[cc]{\scriptsize${t_2}$}}
\put(47.036,9.781){\makebox(0,0)[cc]{$\vdots$}}
\put(41.239,10.078){\circle*{.7}}
\multiput(41.165,10.152)(-.033642105,-.0352){95}{\line(0,-1){.0352}}
\multiput(41.016,10.152)(.03361905,-.03715476){84}{\line(0,-1){.03715476}}
\multiput(43.84,7.031)(-.046453125,-.033679687){128}{\line(-1,0){.046453125}}
\put(43.915,7.031){\line(0,-1){4.162}}
\multiput(43.915,2.868)(-.051461538,.033675214){117}{\line(-1,0){.051461538}}
\put(37.894,6.808){\line(0,1){0}}
\multiput(37.969,6.808)(.03331034,-.06793103){58}{\line(0,-1){.06793103}}
\multiput(39.901,2.868)(.033508197,.035336066){122}{\line(0,1){.035336066}}
\put(38.043,6.808){\circle*{.7}}
\put(43.84,6.956){\circle*{.7}}
\put(37.969,2.794){\circle*{.7}}
\put(39.901,2.868){\circle*{.7}}
\put(43.766,2.868){\circle*{.7}}
\put(41.685,3.017){\makebox(0,0)[cc]{$\cdots$}}
\put(38.043,6.808){\line(0,-1){4.088}}
\put(44.809,2.868){\makebox(0,0)[cc]{\scriptsize$t_3$}}
\put(37.081,3.011){\makebox(0,0)[cc]{\scriptsize$1$}}
\put(39.025,3.011){\makebox(0,0)[cc]{\scriptsize$2$}}
\put(40.57,.53){\makebox(0,0)[cc]{\scriptsize$F_1(t_1,t_2,t_3)$}}
\multiput(9.216,10.137)(-.034655405,.033648649){148}{\line(-1,0){.034655405}}
\put(4.087,15.117){\line(1,0){4.237}}
\multiput(8.324,15.191)(.03272,-.1992){25}{\line(0,-1){.1992}}
\multiput(9.142,10.211)(.03356452,.08032258){62}{\line(0,1){.08032258}}
\put(11.223,15.191){\line(1,0){3.939}}
\multiput(15.162,15.117)(-.041232877,-.03360274){146}{\line(-1,0){.041232877}}
\put(9.29,10.137){\line(0,1){0}}
\put(0,9.171){\line(0,1){0}}
\multiput(11.074,8.279)(.0298,-.0298){5}{\line(0,-1){.0298}}
\put(8.175,7.238){\line(0,1){0}}
\multiput(9.142,10.211)(.07086047,.0337093){86}{\line(1,0){.07086047}}
\multiput(15.236,13.11)(.044070796,-.033539823){113}{\line(1,0){.044070796}}
\multiput(20.216,9.32)(-.07324638,.0333913){69}{\line(-1,0){.07324638}}
\multiput(15.162,11.624)(-.156075,-.03345){40}{\line(-1,0){.156075}}
\multiput(8.919,10.286)(.10763793,-.03332759){58}{\line(1,0){.10763793}}
\multiput(15.162,8.353)(.1844444,.033037){27}{\line(1,0){.1844444}}
\put(9.662,15.191){\makebox(0,0)[cc]{$\cdots$}}
\put(10.0,16.381){\makebox(0,0)[cc]{\scriptsize$t_1$}}
\put(41.39,16.381){\makebox(0,0)[cc]{\scriptsize$t_1$}}
\put(4.087,15.191){\circle*{.7}}
\put(8.175,15.117){\circle*{.7}}
\put(11.223,15.117){\circle*{.7}}
\put(15.013,15.191){\circle*{.7}}
\put(15.088,8.428){\circle*{.7}}
\put(15.162,11.549){\circle*{.7}}
\put(15.162,13.036){\circle*{.7}}
\put(19.993,9.32){\circle*{.7}}
\put(16.128,13.407){\makebox(0,0)[cc]{\scriptsize$1$}}
\put(15.757,10.955){\makebox(0,0)[cc]{\scriptsize$2$}}
\put(16.128,8.03){\makebox(0,0)[cc]{\scriptsize${t_2}$}}
\put(15.013,9.989){\makebox(0,0)[cc]{$\vdots$}}
\put(9.216,10.286){\circle*{.7}}
\multiput(9.142,10.36)(-.033642105,-.0352){95}{\line(0,-1){.0352}}
\multiput(8.993,10.36)(.03361905,-.03715476){84}{\line(0,-1){.03715476}}
\multiput(11.817,7.239)(-.046453125,-.033679687){128}{\line(-1,0){.046453125}}
\put(11.892,7.239){\line(0,-1){4.162}}
\multiput(11.892,3.076)(-.051461538,.033675214){117}{\line(-1,0){.051461538}}
\put(5.871,7.016){\line(0,1){0}}
\multiput(5.946,7.016)(.03331034,-.06793103){58}{\line(0,-1){.06793103}}
\multiput(7.878,3.076)(.033508197,.035336066){122}{\line(0,1){.035336066}}
\put(6.02,7.016){\circle*{.7}}
\put(11.817,7.164){\circle*{.7}}
\put(5.946,3.002){\circle*{.7}}
\put(7.878,3.076){\circle*{.7}}
\put(11.743,3.076){\circle*{.7}}
\put(9.662,3.225){\makebox(0,0)[cc]{$\cdots$}}
\put(6.02,7.016){\line(0,-1){4.088}}
\put(12.786,3.076){\makebox(0,0)[cc]{\scriptsize${t_3}$}}
\put(5.058,3.219){\makebox(0,0)[cc]{\scriptsize$1$}}
\put(7.002,3.219){\makebox(0,0)[cc]{\scriptsize$2$}}
\put(8.593,0){\makebox(0,0)[cc]{\scriptsize$F_0(t_1,t_2,t_3)$}}
\end{picture}
\vspace{-0.5cm}
\end{center}
\caption{\footnotesize{The graphs $F_0(t_1,t_2,t_3)$ and $F_1(t_1,t_2,t_3)$.}}\label{fig-5}
\end{figure}

Denote  by $u_1$  the maximal degree vertex of the friend graph with $t_1$ triangles,  $u_2$
a maximal degree vertex of $K_{2, t_2}$, and
 $u_3$  a vertex of $K_{2, t_3+1}$ with degree two.
Let $F_0(t_1,t_2,t_3)$ be the graph obtained from the above three graphs by identifying $u_1, u_2$ and $u_3$.
Denote $F_1(t_1,t_2,t_3)$ the graph by identifying a vertex of $C_5$ and the maximum degree vertex of $F_0(t_1,t_2,t_3)$ (see
Fig.\ref{fig-5}), where $t_i\ge0$.

In order to give the proof of Theorem \ref{thm-1.2} (ii),  we begin by proving the following two useful propositions.
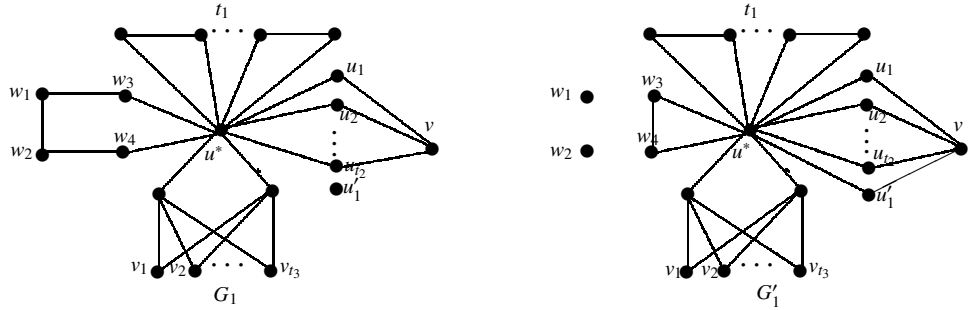
\begin{figure}[h]
\centering \setlength{\unitlength}{2.0pt}
\begin{center}
\unitlength 2.6000mm 
\linethickness{0.6pt}
\ifx\plotpoint\undefined\newsavebox{\plotpoint}\fi 
\begin{picture}(48.438,14.577)(0,0)
\multiput(10.257,8.333)(-.034655405,.033648649){148}{\line(-1,0){.034655405}}
\put(5.128,13.313){\line(1,0){4.237}}
\multiput(9.365,13.387)(.03272,-.1992){25}{\line(0,-1){.1992}}
\multiput(10.183,8.407)(.03356452,.08032258){62}{\line(0,1){.08032258}}
\put(12.264,13.387){\line(1,0){3.939}}
\multiput(16.203,13.313)(-.041232877,-.03360274){146}{\line(-1,0){.041232877}}
\multiput(10.183,8.259)(-.08032258,.03356452){62}{\line(-1,0){.08032258}}
\put(5.203,10.34){\line(-1,0){4.088}}
\put(1.115,10.266){\line(0,-1){2.899}}
\put(1.115,7.367){\line(1,0){4.088}}
\multiput(5.203,7.293)(.1654194,.0335484){31}{\line(1,0){.1654194}}
\put(10.331,8.333){\line(0,1){0}}
\put(1.041,7.367){\line(0,1){0}}
\multiput(12.115,6.475)(.0298,-.0298){5}{\line(0,-1){.0298}}
\put(9.216,5.434){\line(0,1){0}}
\multiput(10.183,8.407)(.07086047,.0337093){86}{\line(1,0){.07086047}}
\multiput(16.277,11.306)(.044070796,-.033539823){113}{\line(1,0){.044070796}}
\multiput(21.257,7.516)(-.07324638,.0333913){69}{\line(-1,0){.07324638}}
\multiput(16.203,9.82)(-.156075,-.03345){40}{\line(-1,0){.156075}}
\multiput(9.96,8.482)(.10763793,-.03332759){58}{\line(1,0){.10763793}}
\multiput(16.203,6.549)(.1844444,.033037){27}{\line(1,0){.1844444}}
\put(10.703,13.387){\makebox(0,0)[cc]{$\cdots$}}
\put(10.331,14.577){\makebox(0,0)[cc]{\scriptsize$t_1$}}
\put(5.128,13.387){\circle*{.7}}
\put(9.216,13.313){\circle*{.7}}
\put(12.264,13.313){\circle*{.7}}
\put(16.054,13.387){\circle*{.7}}
\put(1.115,10.34){\circle*{.7}}
\put(5.351,10.191){\circle*{.7}}
\put(1.115,7.218){\circle*{.7}}
\put(5.203,7.367){\circle*{.7}}
\put(16.129,6.624){\circle*{.7}}
\put(16.203,9.745){\circle*{.7}}
\put(16.203,11.232){\circle*{.7}}
\put(21.034,7.516){\circle*{.7}}
\put(5.277,10.935){\makebox(0,0)[cc]{\scriptsize$w_3$}}
\put(5.351,8.036){\makebox(0,0)[cc]{\scriptsize$w_4$}}
\put(0,10.34){\makebox(0,0)[cc]{\scriptsize$w_1$}}
\put(.074,7.293){\makebox(0,0)[cc]{\scriptsize$w_2$}}
\put(17.169,11.603){\makebox(0,0)[cc]{\scriptsize$u_1$}}
\put(16.798,9.151){\makebox(0,0)[cc]{\scriptsize$u_2$}}
\put(17.169,6.426){\makebox(0,0)[cc]{\scriptsize$u_{t_2}$}}
\put(20.885,8.63){\makebox(0,0)[cc]{\scriptsize$v$}}
\put(16.054,8.185){\makebox(0,0)[cc]{$\vdots$}}
\put(10.257,8.482){\circle*{.7}}
\multiput(10.183,8.556)(-.033642105,-.0352){95}{\line(0,-1){.0352}}
\multiput(10.034,8.556)(.03361905,-.03715476){84}{\line(0,-1){.03715476}}
\multiput(12.858,5.435)(-.046453125,-.033679687){128}{\line(-1,0){.046453125}}
\put(12.933,5.435){\line(0,-1){4.162}}
\multiput(12.933,1.272)(-.051461538,.033675214){117}{\line(-1,0){.051461538}}
\put(6.912,5.212){\line(0,1){0}}
\multiput(6.987,5.212)(.03331034,-.06793103){58}{\line(0,-1){.06793103}}
\multiput(8.919,1.272)(.033508197,.035336066){122}{\line(0,1){.035336066}}
\put(7.061,5.212){\circle*{.7}}
\put(12.858,5.36){\circle*{.7}}
\put(6.987,1.198){\circle*{.7}}
\put(8.919,1.272){\circle*{.7}}
\put(12.784,1.272){\circle*{.7}}
\put(10.703,1.421){\makebox(0,0)[cc]{$\cdots$}}
\put(7.061,5.212){\line(0,-1){4.088}}
\put(13.827,1.272){\makebox(0,0)[cc]{\scriptsize$v_{t_3}$}}
\multiput(37.311,8.333)(-.034655405,.033648649){148}{\line(-1,0){.034655405}}
\put(32.182,13.313){\line(1,0){4.237}}
\multiput(36.419,13.387)(.03272,-.1992){25}{\line(0,-1){.1992}}
\multiput(37.237,8.407)(.03356452,.08032258){62}{\line(0,1){.08032258}}
\put(39.318,13.387){\line(1,0){3.939}}
\multiput(43.257,13.313)(-.041232877,-.03360274){146}{\line(-1,0){.041232877}}
\multiput(37.237,8.259)(-.08032258,.03356452){62}{\line(-1,0){.08032258}}
\multiput(32.257,7.293)(.1654194,.0335484){31}{\line(1,0){.1654194}}
\put(37.385,8.333){\line(0,1){0}}
\put(28.095,7.367){\line(0,1){0}}
\multiput(39.169,6.475)(.0298,-.0298){5}{\line(0,-1){.0298}}
\put(36.27,5.434){\line(0,1){0}}
\multiput(37.237,8.407)(.07086047,.0337093){86}{\line(1,0){.07086047}}
\multiput(43.331,11.306)(.044070796,-.033539823){113}{\line(1,0){.044070796}}
\multiput(48.311,7.516)(-.07324638,.0333913){69}{\line(-1,0){.07324638}}
\multiput(43.257,9.82)(-.156075,-.03345){40}{\line(-1,0){.156075}}
\put(37.757,13.387){\makebox(0,0)[cc]{$\cdots$}}
\put(37.385,14.577){\makebox(0,0)[cc]{\scriptsize$t_1$}}
\put(32.182,13.387){\circle*{.7}}
\put(36.27,13.313){\circle*{.7}}
\put(39.318,13.313){\circle*{.7}}
\put(43.108,13.387){\circle*{.7}}
\put(28.969,10.14){\circle*{.7}}
\put(32.405,10.191){\circle*{.7}}
\put(28.969,7.418){\circle*{.7}}
\put(32.257,7.367){\circle*{.7}}
\put(43.257,9.745){\circle*{.7}}
\put(43.257,11.232){\circle*{.7}}
\put(48.088,7.516){\circle*{.7}}
\put(32.331,10.935){\makebox(0,0)[cc]{\scriptsize$w_3$}}
\put(32.105,8.036){\makebox(0,0)[cc]{\scriptsize$w_4$}}
\put(27.654,10.34){\makebox(0,0)[cc]{\scriptsize$w_1$}}
\put(27.654,7.293){\makebox(0,0)[cc]{\scriptsize$w_2$}}
\put(44.223,11.603){\makebox(0,0)[cc]{\scriptsize$u_1$}}
\put(43.852,9.151){\makebox(0,0)[cc]{\scriptsize$u_2$}}
\put(47.939,8.63){\makebox(0,0)[cc]{\scriptsize$v$}}
\put(37.311,8.482){\circle*{.7}}
\multiput(37.237,8.556)(-.033642105,-.0352){95}{\line(0,-1){.0352}}
\multiput(37.088,8.556)(.03361905,-.03715476){84}{\line(0,-1){.03715476}}
\multiput(39.912,5.435)(-.046453125,-.033679687){128}{\line(-1,0){.046453125}}
\put(39.987,5.435){\line(0,-1){4.162}}
\multiput(39.987,1.272)(-.051461538,.033675214){117}{\line(-1,0){.051461538}}
\put(33.966,5.212){\line(0,1){0}}
\multiput(34.041,5.212)(.03331034,-.06793103){58}{\line(0,-1){.06793103}}
\multiput(35.973,1.272)(.033508197,.035336066){122}{\line(0,1){.035336066}}
\put(34.115,5.212){\circle*{.7}}
\put(39.912,5.36){\circle*{.7}}
\put(34.041,1.198){\circle*{.7}}
\put(35.973,1.272){\circle*{.7}}
\put(39.838,1.272){\circle*{.7}}
\put(37.757,1.421){\makebox(0,0)[cc]{$\cdots$}}
\put(34.115,5.212){\line(0,-1){4.088}}
\put(40.881,1.272){\makebox(0,0)[cc]{\scriptsize$v_{t_3}$}}
\put(16.129,5.435){\circle*{.7}}
\put(16.99,5.435){\makebox(0,0)[cc]{\scriptsize$u_1'$}}
\put(32.332,10.191){\line(0,-1){3.122}}
\put(36.865,7.441){\makebox(0,0)[cc]{\scriptsize$u^*$}}
\put(9.885,7.293){\makebox(0,0)[cc]{\scriptsize$u^*$}}
\put(33.411,1.238){\makebox(0,0)[cc]{\scriptsize$v_1$}}
\put(35.267,1.215){\makebox(0,0)[cc]{\scriptsize$v_2$}}
\put(6.099,1.415){\makebox(0,0)[cc]{\scriptsize$v_1$}}
\put(8.043,1.315){\makebox(0,0)[cc]{\scriptsize$v_2$}}
\put(10.518,0){\makebox(0,0)[cc]{\scriptsize$G_1$}}
\put(38.272,0){\makebox(0,0)[cc]{\scriptsize$G_1'$}}
\multiput(37.262,8.567)(.09995082,-.03359016){61}{\line(1,0){.09995082}}
\multiput(43.359,6.518)(.1494375,.0328438){32}{\line(1,0){.1494375}}
\put(48.141,7.569){\line(-2,-1){4.73}}
\multiput(43.411,5.204)(-.0656875,.033395833){96}{\line(-1,0){.0656875}}
\put(43.359,6.518){\circle*{.7}}
\put(43.359,5.151){\circle*{.7}}
\put(43.254,8.305){\makebox(0,0)[cc]{$\vdots$}}
\put(44.2,7.043){\makebox(0,0)[cc]{\scriptsize$u_{t_2}$}}
\put(44.252,5.046){\makebox(0,0)[cc]{\scriptsize$u_1'$}}
\end{picture}
\vspace{-0.5cm}
\end{center}
\caption{\footnotesize{The vertex labels of $G_1\cong F_1(t_1,t_2,t_3)\cup K_1$ and $G_1'\cong F_0(t_1+1,t_2+1,t_3)\cup 2K_1$.}}\label{fig-6}
\end{figure}

\begin{pro}\label{claim-4.9}
$\rho(F_1(t_1,t_2,t_3))<\rho(F_0(t_1+1,t_2+1,t_3))$, where $F_1(t_1,t_2,t_3)$
and $F_0(t_1+1,t_2+1,t_3)$ have the same number of edges
\begin{equation}\label{m-t}
m=
\left\{
\begin{array}{ll}
3t_1+2t_2+2t_3+7&\ \ \mbox{for any $t_1\ge0$, $t_2=0$ or $\ge2$ and $t_3\ge1$},\\
3t_1+2t_2+5&\ \ \mbox{for any $t_1\ge0$, $t_2=0$ or $\ge2$ and $t_3=0$.}
\end{array}\right.
\end{equation}
\end{pro}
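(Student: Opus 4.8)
The plan is to route everything through characteristic polynomials and to compare the two polynomials on the range where the dominant eigenvalue lives, via Lemma~\ref{lem-2.6'}(ii). The starting point is that both graphs are vertex-coalescences at the hub $u^*$: writing $G*H$ for identification at the roots and $F_0':=F_0(t_1+1,t_2+1,t_3)$, we have $F_1(t_1,t_2,t_3)=F_{t_1}*K_{2,t_2}*K_{2,t_3+1}*C_5$ and $F_0'=F_{t_1+1}*K_{2,t_2+1}*K_{2,t_3+1}$, all rooted at $u^*$. This is exactly the bookkeeping displayed by the auxiliary graphs $G_1\cong F_1(t_1,t_2,t_3)\cup K_1$ and $G_1'\cong F_0'\cup 2K_1$ of Fig.~\ref{fig-6}: padding with isolated vertices equalizes the orders (one checks $n(G_1)=n(G_1')$), so that $P(G_1,x)=xP(F_1,x)$ and $P(G_1',x)=x^2P(F_0',x)$ have the same degree, $\rho(G_1)=\rho(F_1)$, $\rho(G_1')=\rho(F_0')$, and Lemma~\ref{lem-2.6'}(ii) becomes applicable.

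I would then apply Schwenk's coalescence identity $P(G_1*\cdots*G_k,x)\big/\prod_i P(G_i-u,x)=\sum_i \phi_{G_i}-(k-1)x$, where $\phi_H:=P(H,x)/P(H-u,x)$ for each rooted petal $H$. The petal ratios are elementary: from $\phi_{K_3}=\frac{(x+1)(x-2)}{x-1}$ one gets $\phi_{F_t}=t\phi_{K_3}-(t-1)x$; rooting $K_{2,t}$ at a maximum-degree vertex gives $\phi_{K_{2,t}}=\frac{x(x^2-2t)}{x^2-t}$; rooting $K_{2,t_3+1}$ at a degree-two vertex gives $\frac{x(x^2-2t_3-2)}{x^2-2t_3}$; and $\phi_{C_5}=\frac{x^5-5x^3+5x-2}{x^4-3x^2+1}$. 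Dividing out the common petal denominators (which are positive for $x$ past their largest roots) reduces the comparison of $\rho(F_1)$ and $\rho(F_0')$ to comparing the largest roots of $\Phi_1:=\phi_{F_{t_1}}+\phi_{K_{2,t_2}}+\phi_{K_{2,t_3+1}}+\phi_{C_5}-3x$ and $\Phi_0':=\phi_{F_{t_1+1}}+\phi_{K_{2,t_2+1}}+\phi_{K_{2,t_3+1}}-2x$.

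The crux is the sign of $\Phi_1-\Phi_0'$. The shared petal $K_{2,t_3+1}$ cancels outright, and the friendship and bipartite contributions differ by a single unit, producing the telescoping simplifications $\phi_{F_{t_1}}-\phi_{F_{t_1+1}}=x-\phi_{K_3}=\frac{2}{x-1}$ and $\phi_{K_{2,t_2}}-\phi_{K_{2,t_2+1}}=\frac{x^3}{(x^2-t_2)(x^2-t_2-1)}$, so that after combining the remaining $C_5$ term one obtains
\begin{equation*}
\Phi_1-\Phi_0'=\frac{2(x-2)(x^2+x-1)}{(x-1)(x^4-3x^2+1)}+\frac{x^3}{(x^2-t_2)(x^2-t_2-1)}.
\end{equation*}
Since $F_0'$ contains $K_3$, $K_{2,t_2+1}$ and $K_{2,t_3+1}$ as proper subgraphs, Lemma~\ref{lem-2.6'}(i) yields $\rho(F_0')>2$ and $\rho(F_0')^2>2(t_2+1)>t_2+1$; hence at $x=\rho(F_0')$ every factor above is positive and $\Phi_1(\rho(F_0'))>\Phi_0'(\rho(F_0'))=0$. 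Because $\rho(F_0')$ also dominates every root of the cleared denominators, $\Phi_1(x)>0$ holds throughout $[\rho(F_0'),\infty)$, forcing the largest root $\rho(F_1)$ of $\Phi_1$ (equivalently of $P(F_1,\cdot)$) to satisfy $\rho(F_1)<\rho(F_0')$; in the $G_1/G_1'$ framing this is the instance of Lemma~\ref{lem-2.6'}(ii) that the figure anticipates.

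The main obstacle is bookkeeping rather than conceptual: one must secure the positivity of the displayed difference \emph{uniformly} in all admissible $(t_1,t_2,t_3)$ and confirm that $\rho(F_0')$ lies to the right of every pole of the $\phi$'s and of the cancelled denominator factors, which is exactly where the subgraph bounds $\rho(F_0')>\max\{2,\sqrt{2(t_2+1)},\sqrt{2(t_3+1)}\}$ enter. Some extra care is needed in the degenerate cases where a petal collapses, namely $t_2=0$ (the $K_{2,t_2}$ petal is absent and the second summand reads $\frac{x}{x^2-1}>0$) and the line $t_3=0$ of the size formula~\eqref{m-t} (where the $K_{2,t_3+1}$ petal degenerates); each of these I would verify separately, but in every instance by the same positivity inequality.
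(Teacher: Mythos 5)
Your proof is correct, but it takes a genuinely different route from the paper's. The paper proves Proposition \ref{claim-4.9} by the double-eigenvector technique named in its keywords: it pads with isolated vertices so that $G_1\cong F_1(t_1,t_2,t_3)\cup K_1$ and $G_1'\cong F_0(t_1+1,t_2+1,t_3)\cup 2K_1$ live on a common vertex set, obtains $G_1'$ from $G_1$ by the three-edge exchange $-(w_1w_2+w_1w_3+w_2w_4)+(w_3w_4+u^*u_1'+u_1'v)$, and expands $(\rho'-\rho)Y^TZ$ over the exchanged edges, where $Y,Z$ are the two Perron vectors; symmetry plus the vanishing of $Y,Z$ on isolated vertices collapse this to $2(y_{w_3}-y_{w_1})z_{w_3}+(y_{u^*}+y_v)z_{u_1'}$, which is positive because $\rho>\rho(C_5)=2$ gives $y_{w_3}=(\rho-1)y_{w_1}>y_{w_1}$; no characteristic polynomial is ever touched. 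Your argument instead runs through Schwenk's coalescence identity, and I verified its substance: the petal ratios, the telescoping identities $\phi_{F_{t_1}}-\phi_{F_{t_1+1}}=\frac{2}{x-1}$ and $\phi_{K_{2,t_2}}-\phi_{K_{2,t_2+1}}=\frac{x^3}{(x^2-t_2)(x^2-t_2-1)}$, the displayed closed form of $\Phi_1-\Phi_0'$, and the concluding sign argument (your subgraph bounds make the cleared denominators and $\Phi_0'$ nonnegative on $[\rho(F_0'),\infty)$, so $P(F_1,x)=D_1(x)\Phi_1(x)>0$ there) are all correct, and $\rho(F_1(t_1,t_2,t_3))<\rho(F_0(t_1+1,t_2+1,t_3))$ follows. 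What your approach buys: an explicit rational expression for the gap and one computation uniform in $(t_1,t_2,t_3)$; in particular it covers $t_2=0$, where the paper's construction breaks down as written (the added edge $u_1'v$ requires the vertex $v$ of the $K_{2,t_2}$-petal to exist, and the order count $n(G_1)=n(G_1')$, which your framing also asserts, actually fails when $t_2=0$), so your treatment is if anything more complete than the paper's. What it costs: Schwenk's identity is standard but appears nowhere in the paper's toolkit, and your invocations of Lemma \ref{lem-2.6'}(ii) and of the $G_1/G_1'$ padding are decorative rather than operative --- the real mechanism in your proof is simply that the monic polynomial $P(F_1,\cdot)$ is strictly positive on $[\rho(F_0'),\infty)$ and therefore has no root there, which is sound as it stands.
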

\begin{proof}
Let $G_1=F_1(t_1,t_2,t_3)\cup K_1$
and let
$$G_1'=G_1-(w_1w_2+w_1w_3+w_2w_4)+(w_3w_4+u^*u_1'+u_1'v)\cong F_1(t_1+1,t_2+1,t_3)\cup 2K_1, $$
where the  labels of $V(G_1)$ and $V(G_1')$  are shown in  Fig.\ref{fig-7}.
Clearly,  $\rho(G_1)=\rho(F_1(t_1,t_2,t_3))$, $\rho(G_1')=\rho(F_0(t_1+1,t_2+1,t_3))$ and
$$m(G_1)=m(G_1')=m=
\left\{
\begin{array}{ll}
3t_1+2t_2+2t_3+7&\ \ \mbox{for any $t_1\ge0$, $t_2=0$ or $\ge2$ and $t_3\ge1$},\\
3t_1+2t_2+5&\ \ \mbox{for any $t_1\ge0$, $t_2=0$ or $\ge2$ and $t_3=0$.}
\end{array}\right.
$$
It suffices to show $\rho=\rho(G_1)< \rho'=\rho(G_1')$.

Let $Y=(y_1,\ldots, y_n)$ and $Z=(z_1,\ldots, z_n)$ be the Perron eigenvector of $G_1$ and
$G_1'$, respectively.
Then we obtain
\begin{eqnarray*}
(\rho'-\rho)Y^TZ&=&Y^TA(G_1')Z-Y^TA(G_1)Z\\
&=&\sum_{ij \in E(G_1')}(y_iz_j+z_iy_j)-\sum_{ij \in
E(G_1)}(y_iz_j+z_iy_j)\\
&=&(y_{w_3}z_{w_4}+z_{w_3}y_{w_4})+(y_{u^*}z_{u_1'}+z_{u^*}y_{u_1'})
+(y_{u_1'}z_{v}+z_{u_1'}y_{v})\\
& &-[(y_{w_1}z_{w_3}+z_{w_1}y_{w_3})+(y_{w_2}z_{w_1}+z_{w_2}y_{w_1})+(y_{w_2}z_{w_4}+z_{w_2}y_{w_4})].
\end{eqnarray*}
Notice that  $y_{u_1'}=0$, $y_{w_1}=y_{w_2}$, $y_{w_3}=y_{w_4}$, $z_{w_1}=z_{w_2}=0$ and $z_{w_3}=z_{w_4}$, we have
\begin{eqnarray}\label{eq1'}
(\rho'-\rho)Y^TZ=
2(y_{w_3}-y_{w_1})z_{w_3}+y_{u^*}z_{u_1'}+y_{v}z_{u_1'}.
\end{eqnarray}
Since $C_5$ is a proper subgraph of $G_1$,  by Lemma
\ref{lem-2.6'}, we have
$\rho > \rho(C_5)=2$. By the eigen-equation $\rho y_{w_1}=y_{w_2}+y_{w_3}=y_{w_1}+y_{w_3}$, we have $y_{w_3}=(\rho-1)y_{w_1}>y_{w_1}$, and so the right of (\ref{eq1'}) is more than $0$.
Note that $Y^TZ\ge0$.
It follows that $\rho'>\rho$.
\end{proof}

\begin{figure}[h]
\centering \setlength{\unitlength}{2.0pt}
\begin{center}
\unitlength 2.5000mm 
\linethickness{0.6pt}
\ifx\plotpoint\undefined\newsavebox{\plotpoint}\fi 
\begin{picture}(48.787,16.136)(0,0)
\multiput(10.434,10.453)(-.034655405,.033648649){148}{\line(-1,0){.034655405}}
\put(5.305,15.433){\line(1,0){4.237}}
\multiput(9.542,15.507)(.03272,-.1992){25}{\line(0,-1){.1992}}
\multiput(10.36,10.527)(.03356452,.08032258){62}{\line(0,1){.08032258}}
\put(12.441,15.507){\line(1,0){3.939}}
\multiput(16.38,15.433)(-.041232877,-.03360274){146}{\line(-1,0){.041232877}}
\put(10.508,10.453){\line(0,1){0}}
\multiput(12.292,8.595)(.0298,-.0298){5}{\line(0,-1){.0298}}
\put(10.88,16.5){\makebox(0,0)[cc]{\scriptsize$t_1$}}
\put(40.25,16.5){\makebox(0,0)[cc]{\scriptsize$t_1$}}
\put(9.393,7.554){\line(0,1){0}}
\put(10.88,15.507){\makebox(0,0)[cc]{$\cdots$}}
\put(5.305,15.507){\circle*{.7}}
\put(9.393,15.433){\circle*{.7}}
\put(12.441,15.433){\circle*{.7}}
\put(16.231,15.507){\circle*{.7}}
\put(10.434,10.602){\circle*{.7}}
\put(7.089,7.332){\line(0,1){0}}
\multiput(10.341,10.518)(-.07258929,.03314286){56}{\line(-1,0){.07258929}}
\put(6.276,12.374){\line(-1,0){5.038}}
\put(6.276,10.165){\line(0,1){.088}}
\multiput(1.237,12.374)(.033673469,-.034272109){147}{\line(0,-1){.034272109}}
\multiput(6.187,7.336)(-.05401111,.03338889){90}{\line(-1,0){.05401111}}
\multiput(1.326,10.341)(.08259016,.03332787){61}{\line(1,0){.08259016}}
\put(6.364,12.374){\line(-1,-1){5.127}}
\put(1.237,7.248){\line(1,0){5.038}}
\put(6.276,12.463){\circle*{.7}}
\put(6.187,7.159){\circle*{.7}}
\put(1.237,12.374){\circle*{.7}}
\put(1.326,10.341){\circle*{.7}}
\put(1.326,7.248){\circle*{.7}}
\put(1.326,9.016){\makebox(0,0)[cc]{$\vdots$}}
\put(6.276,11.137){\makebox(0,0)[cc]{\scriptsize$v_1'$}}
\put(6.187,8.309){\makebox(0,0)[cc]{\scriptsize$v_2'$}}
\put(0,12.905){\makebox(0,0)[cc]{\scriptsize$v_1$}}
\put(.088,10.253){\makebox(0,0)[cc]{\scriptsize$v_2$}}
\put(.265,7.336){\makebox(0,0)[cc]{\scriptsize$v_{t_3}$}}
\multiput(6.099,7.336)(.047452632,.033494737){95}{\line(1,0){.047452632}}
\multiput(10.518,10.695)(-.033494737,-.047452632){95}{\line(0,-1){.047452632}}
\multiput(7.336,6.187)(.033637168,-.04380531){113}{\line(0,-1){.04380531}}
\multiput(11.137,1.237)(-.03314286,.093125){56}{\line(0,1){.093125}}
\multiput(9.281,6.452)(.033125,.1353437){32}{\line(0,1){.1353437}}
\multiput(10.341,10.783)(.033711864,-.037449153){118}{\line(0,-1){.037449153}}
\multiput(14.319,6.364)(-.033630435,-.055728261){92}{\line(0,-1){.055728261}}
\put(7.336,6.276){\circle*{.7}}
\put(9.369,6.364){\circle*{.7}}
\put(14.231,6.364){\circle*{.7}}
\put(11.225,1.237){\circle*{.7}}
\put(11.667,6.364){\makebox(0,0)[cc]{$\cdots$}}
\put(15.468,6.364){\circle*{.7}}
\put(19.269,6.364){\circle*{.7}}
\put(17.324,6.276){\makebox(0,0)[cc]{$\cdots$}}
\put(7.248,5.392){\makebox(0,0)[cc]{\scriptsize$u_1$}}
\put(9.988,5.568){\makebox(0,0)[cc]{\scriptsize$u_2$}}
\put(14.0,5.568){\makebox(0,0)[cc]{\scriptsize$u_{t_2}$}}
\put(15.291,7.248){\makebox(0,0)[cc]{\scriptsize$u_1'$}}
\put(19.092,7.259){\makebox(0,0)[cc]{\scriptsize$u'_{t_3+1}$}}
\put(11.5,10.5){\makebox(0,0)[cc]{\scriptsize$u^*$}}
\put(12.286,1.414){\makebox(0,0)[cc]{\scriptsize$v$}}
\multiput(39.602,10.732)(-.034655405,.033648649){148}{\line(-1,0){.034655405}}
\put(34.473,15.712){\line(1,0){4.237}}
\multiput(38.71,15.786)(.03272,-.1992){25}{\line(0,-1){.1992}}
\multiput(39.528,10.806)(.03356452,.08032258){62}{\line(0,1){.08032258}}
\put(41.609,15.786){\line(1,0){3.939}}
\multiput(45.548,15.712)(-.041232877,-.03360274){146}{\line(-1,0){.041232877}}
\put(39.676,10.732){\line(0,1){0}}
\multiput(41.46,8.874)(.0298,-.0296){5}{\line(1,0){.0298}}
\put(38.561,7.833){\line(0,1){0}}
\put(40.048,15.786){\makebox(0,0)[cc]{$\cdots$}}
\put(34.473,15.786){\circle*{.7}}
\put(38.561,15.712){\circle*{.7}}
\put(41.609,15.712){\circle*{.7}}
\put(45.399,15.786){\circle*{.7}}
\put(39.602,10.881){\circle*{.7}}
\put(36.257,7.611){\line(0,1){0}}
\put(35.444,12.742){\circle*{.7}}
\put(35.355,7.438){\circle*{.7}}
\put(30.405,12.652){\circle*{.7}}
\put(30.494,10.62){\circle*{.7}}
\put(30.494,7.526){\circle*{.7}}
\put(30.494,9.294){\makebox(0,0)[cc]{\scriptsize$\vdots$}}
\put(35.444,11.615){\makebox(0,0)[cc]{\scriptsize$v_1'$}}
\put(35.355,8.587){\makebox(0,0)[cc]{\scriptsize$v_2'$}}
\put(29.168,13.184){\makebox(0,0)[cc]{\scriptsize$v_1$}}
\put(29.256,10.531){\makebox(0,0)[cc]{\scriptsize$v_2$}}
\put(29.433,7.614){\makebox(0,0)[cc]{\scriptsize$v_{t_3}$}}
\multiput(39.686,10.973)(-.033494737,-.047452632){95}{\line(0,-1){.047452632}}
\multiput(36.504,6.465)(.033637168,-.04379646){113}{\line(0,-1){.04379646}}
\multiput(40.305,1.516)(-.03314286,.09310714){56}{\line(0,1){.09310714}}
\multiput(38.449,6.73)(.033125,.135375){32}{\line(0,1){.135375}}
\multiput(39.509,11.062)(.033711864,-.037457627){118}{\line(0,-1){.037457627}}
\multiput(43.487,6.642)(-.033630435,-.055717391){92}{\line(0,-1){.055717391}}
\put(36.504,6.554){\circle*{.7}}
\put(38.537,6.642){\circle*{.7}}
\put(43.399,6.642){\circle*{.7}}
\put(40.393,1.516){\circle*{.7}}
\put(40.835,6.642){\makebox(0,0)[cc]{$\cdots$}}
\put(44.636,6.642){\circle*{.7}}
\put(48.437,6.642){\circle*{.7}}
\put(46.492,6.554){\makebox(0,0)[cc]{$\cdots$}}
\put(36.416,5.67){\makebox(0,0)[cc]{\scriptsize$u_1$}}
\put(39.156,5.847){\makebox(0,0)[cc]{\scriptsize$u_2$}}
\put(43.445,5.847){\makebox(0,0)[cc]{\scriptsize$u_{t_2}$}}
\put(44.459,7.526){\makebox(0,0)[cc]{\scriptsize$u_1'$}}
\put(48.26,7.538){\makebox(0,0)[cc]{\scriptsize$u'_{t_3+1}$}}
\put(40.92,11){\makebox(0,0)[cc]{\scriptsize$u^*$}}
\put(41.454,1.693){\makebox(0,0)[cc]{\scriptsize$v$}}
\multiput(39.686,11.049)(.039992063,-.033674603){126}{\line(1,0){.039992063}}
\multiput(44.725,6.806)(-.033581395,-.04179845){129}{\line(0,-1){.04179845}}
\multiput(40.393,1.414)(.050275,.0337){160}{\line(1,0){.050275}}
\multiput(48.43,6.806)(-.069891473,.033573643){129}{\line(-1,0){.069891473}}
\put(10.164,0){\makebox(0,0)[cc]{\scriptsize$G_2$}}
\put(37.918,0){\makebox(0,0)[cc]{\scriptsize$G_2'$}}
\end{picture}

\vspace{-0.5cm}
\end{center}
\caption{\footnotesize{The  vertex labels of
$G_2\cong F_0(t_1,t_2,t_3)\cup(t_3+1)K_1$  and  $G_2'\cong F_0(t_1,t_2+t_3+1,0)\cup(t_3+2)K_1$.}}\label{fig-7}
\end{figure}
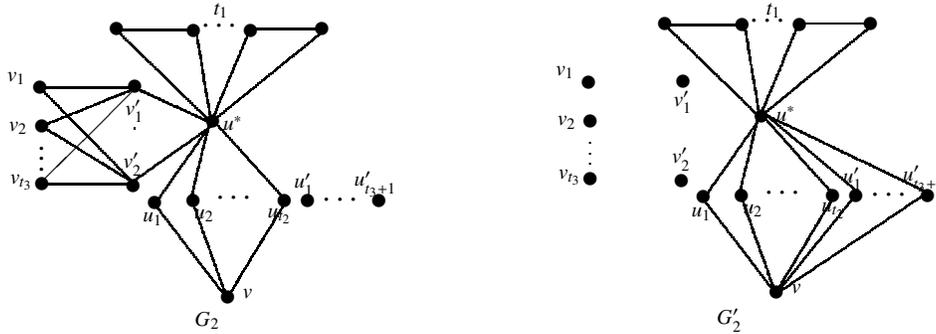

\begin{pro}\label{claim-4.9'}
$\rho(F_0(t_1,t_2,t_3))<\rho(F_0(t_1,t_2+t_3+1,0))$,
where $F_0(t_1,t_2,t_3)$ and $F_0(t_1,t_2+t_3+1,0)$ have the same number of edges $m=3t_1+2t_2+2t_3+2$, where $t_1\ge0$, $t_2=0$ or $\ge2$, and $t_3\ge1$.
\end{pro}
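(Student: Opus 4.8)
The plan is to mirror the eigenvector-transfer argument of Proposition \ref{claim-4.9}. First I would place the two graphs on a common vertex set by padding with isolated vertices, writing $G_2=F_0(t_1,t_2,t_3)\cup(t_3+1)K_1$ and $G_2'=F_0(t_1,t_2+t_3+1,0)\cup(t_3+2)K_1$ with the vertex labels of Fig.\ref{fig-7}, so that $\rho(G_2)=\rho(F_0(t_1,t_2,t_3))$ and $\rho(G_2')=\rho(F_0(t_1,t_2+t_3+1,0))$. The key observation is that $G_2'$ arises from $G_2$ by one local switch: the petals $v_1,\dots,v_{t_3}$ of the $K_{2,t_3+1}$-block are detached from the two hubs $h_1,h_2$ of that block and reattached to the two hubs $u^*,g$ of the $K_{2,t_2}$-block, enlarging the latter block and reducing the former to a $K_{2,1}$, exactly as drawn in Fig.\ref{fig-7}. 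Recording the deleted edges $E_{\mathrm{del}}=\{v_ih_1,v_ih_2\}$ and the added edges $E_{\mathrm{add}}=\{v_iu^*,v_ig\}$ is the bookkeeping step. (When $t_2=0$ the $K_{2,t_2}$-block is absent, so one of the freed hubs plays the role of the new second hub; I would treat this as a minor variant of the same switch.)

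Next I would let $Y$ and $Z$ be the Perron vectors of $G_2$ and $G_2'$, with $\rho=\rho(G_2)$ and $\rho'=\rho(G_2')$. Since both adjacency matrices act on the same space,
\[
(\rho'-\rho)\,Y^TZ=Y^T\!\big(A(G_2')-A(G_2)\big)Z=\sum_{ij\in E_{\mathrm{add}}}(y_iz_j+z_iy_j)-\sum_{ij\in E_{\mathrm{del}}}(y_iz_j+z_iy_j).
\]
I would then invoke the symmetries of both graphs: entries at isolated vertices vanish, the two hubs of each block share a value, and the petals of each block share a value. Substituting $y_{v_1}=\dots=y_{v_{t_3}}=:y_v$, $y_{h_1}=y_{h_2}=:y_h$, together with the analogous equalities for $Z$ (where the $v_i$ become ordinary petals of common value $z_p$ and the $h_j$ become pendant vertices of common value $z_h'$), collapses the double sum to
\[
(\rho'-\rho)\,Y^TZ=t_3\big[\,y_v(z_{u^*}+z_g-2z_h')+z_p(y_{u^*}+y_g-2y_h)\,\big].
\]

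The decisive and hardest step is to prove the bracket is positive. For the first summand I would use the eigen-equation $\rho'z_h'=z_{u^*}$ at the pendant vertices of $G_2'$, giving $z_{u^*}+z_g-2z_h'=(1-2/\rho')z_{u^*}+z_g>0$, since $\rho'>\rho(C_\ell)=2$ because $G_2'$ properly contains a cycle (Lemma \ref{lem-2.6'}). The second summand, however, need not be individually positive: solving the eigen-equations of $G_2$ yields $y_{u^*}+y_g-2y_h=\big(\tfrac{\rho^2}{\rho^2-t_2}-\tfrac{2\rho}{\rho^2-2t_3}\big)y_{u^*}$, whose sign genuinely depends on $t_2,t_3$ and $\rho$. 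The crux is therefore to bound the cross-graph coordinates $y_v,z_p$ and the hub values against one another so that the always-positive first summand dominates any deficit in the second; I expect this to require comparing the two spectral radii through the shared friendship/block structure and exploiting that $y_{u^*}$ and $z_{u^*}$ are the maximal entries of their respective vectors. Once the bracket is shown positive, the identity together with $Y^TZ\ge y_{u^*}z_{u^*}>0$ (both vectors are nonnegative and strictly positive at $u^*$) yields $\rho'>\rho$, which is precisely $\rho(F_0(t_1,t_2,t_3))<\rho(F_0(t_1,t_2+t_3+1,0))$, as desired.
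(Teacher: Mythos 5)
Your setup (padding with isolated vertices and computing $(\rho'-\rho)Y^TZ$ from the edge difference) is indeed the paper's strategy, but there are two genuine gaps. First, your switch does not produce the graph the proposition is about. Detaching only the $t_3$ petals $v_1,\dots,v_{t_3}$ from the hubs $h_1,h_2$ and reattaching them to $u^*$ and $g$ leaves the two edges $u^*h_1$ and $u^*h_2$ in place, so your $G_2'$ is the friendship part plus $K_{2,t_2+t_3}$ plus a pendant path $h_1u^*h_2$, together with the untouched padding $(t_3+1)K_1$. In the proposition (and in the way it is applied in Claim \ref{claim-4.8}), a third parameter equal to $0$ means the $K_{2,t_3+1}$-block is absent altogether; that is why $F_0(t_1,t_2,t_3)$ and $F_0(t_1,t_2+t_3+1,0)$ have the same size $3t_1+2t_2+2(t_3+1)=3t_1+2(t_2+t_3+1)$. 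Your graph has two pendant vertices (so it is not even $2$-edge-connected) and one more non-isolated vertex than $F_0(t_1,t_2+t_3+1,0)$, hence it is not isomorphic to $F_0(t_1,t_2+t_3+1,0)\cup(t_3+2)K_1$. The paper's switch is different: it deletes \emph{all} $2t_3+2$ edges of the $K_{2,t_3+1}$-block, isolating $v_1',v_2',v_1,\dots,v_{t_3}$, and joins each of the $t_3+1$ padding vertices $u_1',\dots,u_{t_3+1}'$ to both hubs $u^*$ and $v$. Even if you completed your estimate, you would have proved the inequality for the wrong comparison graph, and closing the remaining gap (pendant path versus one extra petal) is another comparison of exactly the same kind, so your reduction does not terminate.

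Second, the step you yourself call the crux is missing, and it is exactly the point where the paper's choice of switch pays off. Because in the paper's transformation every added edge has an endpoint where $Y$ vanishes ($u_i'$ is isolated in $G_2$) and every deleted edge has an endpoint where $Z$ vanishes ($v_1',v_2',v_j$ are isolated in $G_2'$), the difference collapses to $(\rho'-\rho)Y^TZ=(t_3+1)\left(y_{u^*}+y_{v}\right)z_{u_1'}-2y_{v_1'}z_{u^*}$, with no mixed term of ambiguous sign such as your $z_p\left(y_{u^*}+y_g-2y_h\right)$. The paper then assumes $\rho\ge\rho'$ for contradiction, uses that $x\mapsto\frac{(t_3+1)x}{x^2-(t_2+t_3+1)}$ is decreasing to replace $\rho'$ by $\rho$, and reduces everything to the elementary inequality $(t_3+1)(\rho^2-2t_3)-2\left(\rho^2-(t_2+t_3+1)\right)=(t_3-1)\rho^2-2t_3^2+2t_2+2>0$ for $\rho>\sqrt{2t_3+2}$. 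Your outline contains no substitute for this mechanism: as you correctly observe, $y_{u^*}+y_g-2y_h$ can genuinely be negative (for instance when $t_2=0$ and the $K_{2,t_3+1}$-block dominates the spectral radius), and bounding the cross-graph coordinates against one another is precisely the unsolved difficulty, not a routine verification. So the proposal, as it stands, neither targets the right graph nor contains the key estimate.
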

\begin{proof}
Let $G_2=F_0(t_1,t_2,t_3)\cup (t_3+1)K_1$, its vertices
be labelled
  as in Fig.\ref{fig-6} and the isolated set
$I_2=\{u_1',u_2', \cdots, u_{t_3+1}'\}$.
Let
$$G_2'=G_2-(\sum_{i=1}^{2}u^*v_i'+\sum_{i=1}^{2}\sum_{j=1}^{t_3}v_i'v_j)+(\sum_{i=1}^{t_3+1}(u^*u_i'+vu_i'))\cong
 F_0(t_1,t_2+t_3+1,0)\cup (t_3+2)K_1.$$
Clearly, $\rho(F_0(t_1,t_2,t_3))=\rho(G_2)$ and
$\rho( F_0(t_1,t_2+t_3+1,0))=\rho(G_2')$. Also, $F_0(t_1,t_2,t_3)$ and $F_0(t_1,t_2+t_3+1,0)$ have the same number of edges $3t_1+2t_2+2t_3+2=m$. 
It suffices to show $\rho=\rho(G_2)< {\rho}'=\rho(G_2')$.

 Suppose to the contrary that $\rho\ge{\rho}'$.
Let
$Y=(y_1,\ldots, y_n)$ and $Z=(z_1,\ldots, z_n)$ be the Perron eigenvector of $G_2$ and
$G_2'$, respectively.
By symmetry, we have $y_{u_1'}=0$, and $z_{v_1}=z_{v_1'}=0$.
Thus we obtain
\begin{eqnarray}\label{eq3'}
&&(\rho{'}-\rho)Y^TZ=Y^TA(G_2^{'})Z-Y^TA(G_2)Z
=\sum_{ij \in E(G_2^{'})}(y_iz_j+z_iy_j)-\sum_{ij \in
E(G_2)}(y_iz_j+z_iy_j) \nonumber\\
&=&(t_3+1)(y_{u^*}z_{u_1'}+z_{u^*}y_{u_1'}+y_{v}z_{u_1'}+z_{v}y_{u_1'})
-2(y_{u^*}z_{v_1'}+z_{u^*}y_{v_1'})-2t_3(y_{v_1}z_{v_1'}+z_{v_1}y_{v_1'})\nonumber\\
&=&(t_3+1)(y_{u^*}z_{u_1'}+y_{v}z_{u_1'})-2y_{v_1'}z_{u^*}.
\end{eqnarray}
By eign-equation of $A(G_2)$ and $A(G'_2)$, we have
\begin{eqnarray*}
\left\{\begin{array}{ll} \rho y_{v}=t_2y_{u_1},\\ \rho y_{u_1}=y_{u^*}+y_v.
\end{array}\right.&
\left\{\begin{array}{ll} \rho y_{v_1'}=y_{u^*}+t_3y_{v_1},\\ \rho y_{v_1}=2y_{v_1'}.
\end{array}\right.&
\left\{\begin{array}{ll} \rho' z_{u_1'}=z_{u^*}+z_v,\\
\rho'z_v=(t_3+t_2+1)z_{u_1'}. \end{array}\right.
\end{eqnarray*}
Then we obtain
$$y_v=\frac{t_2}{{\rho}^2-t_2}y_{u^*},\ \
y_{v_1'}=\frac{\rho}{{\rho}^2-2t_3}y_{u^*}\ \
\mbox{and} \ \ z_{u_1'}=\frac{\rho'}{{\rho'}^2-(t_3+t_2+1)}z_{u^*}.
$$
From (\ref{eq3'}),  we get
\begin{eqnarray}\label{eq4'}
(\rho'-\rho)Y^TZ&=&(\frac{(t_3+1)\rho'}{{\rho'}^2-(t_3+t_2+1)}\frac{{\rho}^2}{{\rho}^2-t_2}-\frac{2\rho}{{\rho}^2-2t_3})y_{u^*}z_{u^*}.
\end{eqnarray}
Since $\frac{(t_3+1)\rho'}{{\rho'}^2-(t_3+t_2+1)}$ monotonically decreases with respect to $\rho'$ and
$\rho \ge \rho'$, from (\ref{eq4'}) we  get
\begin{eqnarray}\label{eq5'}
(\rho'-\rho)Y^TZ&\ge&(\frac{(t_3+1)\rho}{{\rho}^2-(t_3+t_2+1)}\frac{{\rho}^2}{{\rho}^2-t_2}-\frac{2\rho}{{\rho}^2-2t_3})y_{u^*}z_{u^*}\nonumber\\
&=&(\frac{t_3+1}{{\rho}^2-(t_3+t_2+1)}\frac{{\rho}^2}{{\rho}^2-t_2}-\frac{2}{{\rho}^2-2t_3})\rho
y_{u^*}z_{u^*}\nonumber\\
&\ge&(\frac{t_3+1}{{\rho}^2-(t_3+t_2+1)}-\frac{2}{{\rho}^2-2t_3})\rho y_{u^*}z_{u^*}
\end{eqnarray}
Let $g(\rho)=(t_3+1)({\rho}^2-2t_3)-2({\rho}^2-(t_3+t_2+1))
=(t_3-1){\rho}^2-2t_3^2+2t_2+2$. It is clear
that $g(\rho) >0$  for $\rho > \rho(K_{2,t_3+1})=\sqrt{2t_3+2}$.
One can also verify that $\rho^2>t_2+t_3+1$.
Hence,
$$\frac{(t_3+1)}{{\rho}^2-(t_3+t_2+1)}-\frac{2}{{\rho}^2-2t_3}>0.$$
From (\ref{eq5'})
we have  $\rho'>\rho$, a   contradiction. Therefore, $\rho< \rho'$.
\end{proof}

Now is the time to prove (ii) of Theorem \ref{thm-1.2}.

\begin{proof}[\bf{Proof of Theorem \ref{thm-1.2} (ii)}]
Let $G^*$ be the graph  with the maximum spectral radius over all minimally 2-edge-connected graph of odd size $m \ge 11$,  and let
$X=(x_1,\ldots,x_n)^T$
be the Perron eigenvector of $G^*$ with coordinate $x_{u^*}=\max\{x_i\mid i\in V(G^*)\}$. Denote by $\rho^*=\rho(G^*)$,
$A=N(u^*)$ and $B=V(G^*)\setminus{(A \cup {u^*})}$. Notice that $\delta(G^*)=2$.
Now we give  Claims \ref{claim-4.1}-\ref{claim-4.8} to finish the proof of Theorem \ref{thm-1.2} (ii).

\begin{clm}\label{claim-4.1}
 $G^*[A]$ is isomorphic  to the union of some independent edges and isolated vertices.
\end{clm}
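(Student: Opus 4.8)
The plan is to show that the induced subgraph $G^*[A]$ has maximum degree at most one, which is exactly equivalent to the assertion that $G^*[A]$ is a disjoint union of independent edges and isolated vertices. The only structural input I expect to need is Lemma \ref{le-2.6}, which guarantees that no cycle of a minimally $2$-edge-connected graph—in particular none of $G^*$—admits a chord. So the whole argument should be a short proof by contradiction built around a forbidden chorded $4$-cycle through $u^*$.

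First I would suppose, for contradiction, that some vertex $v \in A$ has at least two neighbours inside $A$, say $w_1, w_2 \in A$ with $w_1 \neq w_2$. Because $A = N(u^*)$, the edges $u^*v$, $u^*w_1$ and $u^*w_2$ all belong to $G^*$; and because $w_1, w_2$ are neighbours of $v$ within $A$, the edges $vw_1$ and $vw_2$ also belong to $G^*$. Here one should check that the four vertices $u^*, w_1, v, w_2$ are pairwise distinct: $v, w_1, w_2$ all lie in $A$ and hence differ from $u^*$, and they differ from one another by the choice of $w_1 \neq w_2$ and since $v \notin \{w_1,w_2\}$ (a vertex is not its own neighbour).

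Next I would exhibit the forbidden configuration. The four edges $u^*w_1$, $w_1v$, $vw_2$, $w_2u^*$ form a $4$-cycle $C = u^* w_1 v w_2 u^*$, and the edge $u^*v$ joins the two vertices $u^*$ and $v$ that are non-adjacent along $C$. Thus $u^*v$ is a genuine chord of $C$, contradicting Lemma \ref{le-2.6}. Consequently no vertex of $A$ can have two neighbours in $A$, so $\Delta(G^*[A]) \le 1$, which means $G^*[A]$ is a union of independent edges and isolated vertices, as claimed. The argument is essentially immediate once the no-chord property is invoked; the only point demanding any care is verifying that the four vertices of $C$ are distinct so that $C$ is an honest $4$-cycle and $u^*v$ a genuine chord, and I do not anticipate a serious obstacle beyond this bookkeeping.
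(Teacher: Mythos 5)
Your proof is correct and rests on exactly the same key tool as the paper's: Lemma \ref{le-2.6} applied to the chorded $4$-cycle $u^*w_1vw_2$ with chord $u^*v$, which is precisely how the paper rules out a $P_3$ in $G^*[A]$. Your formulation via $\Delta(G^*[A])\le 1$ is a mild streamlining, since it makes the paper's separate first step (excluding cycles in $G^*[A]$ by finding a wheel) logically redundant, but it is not a different method.
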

\begin{proof}
On the one hand, $G^*[A]$ contains no cycle. Otherwise, we assume that a cycle $C_l \subset G^*[A]$ ($l\ge3$).
Then there exists a wheel $W_{l+1}$ in $G^*$,  it  forms a chorded cycle  in $G^*$,  which contradicts   Lemma
\ref{le-2.6}.
On the other hand, $G^*[A]$ contains no $P_3$.  Otherwise, $G^*$ contains  a chorded cycle with order $4$, a contradiction.
\end{proof}
Let $A_1$ be the isolated vertex set of $G^*[A]$. Then  $A_2=A \setminus A_1$ consists of some independent edges if $A_2\not=\emptyset$.
\begin{clm}\label{claim-4.2}
$N_B(u)= \emptyset$ for any  $u \in A_2$.
\end{clm}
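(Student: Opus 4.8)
The plan is to assume for contradiction that some $u\in A_2$ has a neighbour $w\in B$, to use the triangle through $u^*$ to force $u$ to be a cut vertex of $G^*$, and then to contradict the maximality of $\rho^*$ by a block transfer toward $u^*$. The chord argument alone will not suffice here, because the cycle through $uw$ lives in a block disjoint from $u^*$, so $u^*$ cannot be pulled onto a cycle with $w$; this is why I would route the contradiction through the Perron vector.

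First I would pin down the triangle. Since $u\in A_2$, there is $u'\in A$ with $uu'\in E(G^*)$, and as $u,u'\in N(u^*)$ the vertices $u^*,u,u'$ span a triangle $T$. Being $2$-connected, $T$ lies inside a single block $B_0$ of $G^*$. By the block decomposition of a minimally $2$-edge-connected graph (Lemma \ref{le-2.10} and the remark following it) every block of $G^*$ is minimally $2$-connected, while a minimally $2$-connected graph on more than three vertices is triangle-free (Lemma \ref{le-2.1}). Hence $B_0$ has exactly three vertices, so $B_0=T\cong K_3$ on $\{u^*,u,u'\}$. Because $w\in B$ gives $w\notin A\cup\{u^*\}\supseteq V(B_0)$, the edge $uw$ lies in a block different from $B_0$; therefore $u$ is a cut vertex of $G^*$.

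Next I would derive the contradiction by a rotation toward $u^*$. Deleting $u$ separates $\{u^*,u'\}$ from $w$, so the component $K$ of $G^*-u$ containing $w$ carries at least one leaf block $B_\ell$ of $G^*$, whose unique cut vertex $c$ lies in $K\cup\{u\}$ and in particular satisfies $c\neq u^*$. The neighbours of $c$ inside $B_\ell$ are separated from $u^*$ by $u$, hence lie in $N(c)\setminus N(u^*)$. Transferring this leaf block from $c$ to $u^*$, that is, forming $G'=G^*-\sum_{v\in N(c)\cap B_\ell}cv+\sum_{v\in N(c)\cap B_\ell}u^*v$, keeps the graph minimally $2$-edge-connected by Lemma \ref{block-2}; and since $x_{u^*}=\max_i x_i\ge x_c$, Lemma \ref{le-2.9} yields $\rho(G')>\rho(G^*)=\rho^*$, contradicting the choice of $G^*$. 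Thus no $u\in A_2$ has a neighbour in $B$, which is the assertion.

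The routine part is the triangle/block identification. The hard part will be the final transfer: one must check simultaneously that the moved edges form a legitimate instance of Lemma \ref{le-2.9} (namely $c\neq u^*$ and the block-neighbours of $c$ are non-neighbours of $u^*$) and that minimal $2$-edge-connectivity survives, which is exactly what Lemma \ref{block-2} guarantees. As a sanity check, the degenerate case $d(u^*)=2$ can bypass the transfer entirely: the eigen-equations $\rho^* x_{u^*}=x_u+x_{u'}$ and $\rho^* x_u\ge x_{u^*}+x_{u'}+x_w$ combine to give $(\rho^*+1)(x_u-x_{u^*})\ge x_w>0$, so $x_u>x_{u^*}$, already contradicting the maximality of $x_{u^*}$.
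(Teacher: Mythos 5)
Your proof is correct, but it follows a genuinely different route from the paper's, and your opening claim that the chord argument ``will not suffice'' is mistaken: the chord argument is precisely the paper's proof. Assuming $v\in B$ is adjacent to $u_2\in A_2$ with $u_2\sim u_2'\in A_2$, the paper uses $\delta(G^*)=2$ to get a second neighbour $u\in A$ of $v$ (or, when all further neighbours of $v$ lie in $B$, uses $2$-edge-connectedness to extend a path inside $B$ until it re-enters $A$), and in every case the assumed adjacencies themselves produce a cycle through $u^*,u_2',u_2,v$ carrying the chord $u^*u_2$ or $u_2u_2'$, contradicting Lemma \ref{le-2.6}; no pre-existing cycle through $u^*$ and $w$ is required, so your objection (which tacitly presupposes the block structure you only derive later) does not apply. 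Your alternative is nevertheless sound: the observation that any triangle of $G^*$ must itself be a block (via Lemmas \ref{le-2.11}, \ref{le-2.10} and \ref{le-2.1}) correctly forces $u$ to be a cut vertex separating $w$ from $u^*$; the component $K$ of $G^*-u$ containing $w$ satisfies $K\cap N(u^*)=\emptyset$, so your leaf-block transfer from $c$ to $u^*$ is a legitimate instance of Lemma \ref{le-2.9}, preserves minimal $2$-edge-connectivity by Lemma \ref{block-2}, and contradicts the extremality of $\rho^*$. Comparing the two: the paper's argument is purely combinatorial and proves the claim for \emph{every} minimally $2$-edge-connected graph and every choice of $u^*$, with no appeal to the extremal choice of $G^*$; yours is intrinsically an extremal argument, leaning on the Perron vector and the transfer machinery that the paper reserves for later claims, but in exchange it isolates the clean structural fact that triangles in a minimally $2$-edge-connected graph are blocks --- a fact the paper records (in the form ``each triangle through $u^*$ is a leaf block'') only as a corollary after Claims \ref{claim-4.1} and \ref{claim-4.2}.
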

\begin{proof}
Otherwise, there exists a vertex $v \in B$ that is adjacent to a vertex  $u_2 \in A_2$. We may further assume  that  $u_2\sim u_2'\in A_2$. If $v$ has no neighbor in $B$, then
there exists a vertex $u \in A$ adjacent to $v$ due to $\delta(G^*)\ge 2$. It follows that
$$\left\{\begin{array}{ll}
C=u^*u_2'u_2vu_1 \mbox{ is a cycle with the chord $u^*u_2$ }& \mbox{ if $u=u_1\in A_1$ }\\
C=u^*u_2'vu_2 \mbox{ is a cycle with the chord $u_2'u_2$ }& \mbox{ if $u=u_2'\in A_2$ }\\
C=u^*u_2'u_2vu_3 \mbox{ is a cycle with the chord $u_2^*u_2$ }& \mbox{ if $u=u_3\in A_2$ }
\end{array}\right.
$$
It is impossible since any cycle  of  $G^*$ has no chord.  So,  $d_B(v)
\ge1$. However, in this situation, there exists a path $P:=vv_1 \cdots v_t$ in $G^*[B]$ such that $v_t$ is adjacent to some $u'\in A$ since otherwise $u_2v$ will be a cut edge. By regarding   $u'$ as the above $u$, as similar above  we can  find a chorded cycle in $G^*$, a contradiction.
\end{proof}
If $A_2 \neq \emptyset$, then, from Claim \ref{claim-4.1}, $\{u^*\}\cup A_2$ induces $t_1=\frac{|A_2 |}{2}$'s triangles with a common vertex $u^*$. Moreover, we see from Claim \ref{claim-4.2} that each of these triangles must be a leaf block of $G^*$.

\begin{clm}\label{claim-4.2'}  $e(B)=0$ or $1$.\end{clm}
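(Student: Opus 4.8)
The plan is to adapt the eigenvalue-counting argument used for Claim \ref{claim-3.2} in the 2-connected setting, modified to absorb the triangles at $u^*$ that are now permitted. Starting from the Perron equation $\rho^* x_{u^*}=\sum_{v\in A}x_v$, I would expand
$${\rho^*}^2 x_{u^*}=\sum_{v\in A}\rho^* x_v=\sum_{v\in A}\sum_{u\in N(v)}x_u$$
and split each $N(v)$ into $\{u^*\}$, the neighbours of $v$ inside $A$, and the neighbours of $v$ inside $B$. Using Claim \ref{claim-4.1} (so $G^*[A]$ is a matching and each $A$-edge contributes its two $A_2$-endpoints exactly once), this becomes
$${\rho^*}^2 x_{u^*}=|A|\,x_{u^*}+\sum_{u\in A_2}x_u+\sum_{v\in A}\sum_{u\in N_B(v)}x_u.$$

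The key new ingredient is to pin down the eigenvector on $A_2$. By Claims \ref{claim-4.1} and \ref{claim-4.2}, every $v\in A_2$ lies in a triangle $\{u^*,v,v'\}$ and has degree $2$, with neighbours $u^*$ and its matched partner $v'$. The equations $\rho^* x_v=x_{u^*}+x_{v'}$ and $\rho^* x_{v'}=x_{u^*}+x_v$ force $x_v=x_{v'}$ and hence $x_v=\frac{x_{u^*}}{\rho^*-1}$, so $\sum_{u\in A_2}x_u=\frac{|A_2|}{\rho^*-1}x_{u^*}$. Claim \ref{claim-4.2} also shows the $B$-contribution is supported on $A_1$, hence is at most $e(A_1,B)\,x_{u^*}$. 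Dividing by $x_{u^*}>0$ gives ${\rho^*}^2\le |A|+\frac{|A_2|}{\rho^*-1}+e(A_1,B)$. I would then feed in the edge count $m=|A|+e(A)+e(A,B)+e(B)$ with $e(A)=\frac{|A_2|}{2}$ and $e(A,B)=e(A_1,B)$ (again by Claim \ref{claim-4.2}); substituting $|A|+e(A_1,B)=m-\frac{|A_2|}{2}-e(B)$ yields
$${\rho^*}^2\le m-e(B)-|A_2|\Big(\tfrac12-\tfrac{1}{\rho^*-1}\Big).$$

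Since $m\ge11$ gives $\rho^*>\sqrt{m-2}\ge 3$ by Lemma \ref{le-4.1}, the parenthesised factor is positive, so the correction term is nonnegative and ${\rho^*}^2\le m-e(B)$. Combining with ${\rho^*}^2>m-2$ from Lemma \ref{le-4.1} forces $e(B)<2$, i.e.\ $e(B)\in\{0,1\}$. The step I expect to be the main obstacle is exactly the control of the extra mass $\sum_{u\in A_2}x_u$ from the triangles, which was simply zero in the triangle-free 2-connected case. The whole argument hinges on the exact value $x_v=\frac{x_{u^*}}{\rho^*-1}$ together with $\rho^*\ge3$: this is precisely what guarantees that the positive contribution $\frac{|A_2|}{\rho^*-1}x_{u^*}$ does not exceed the edge ``savings'' $\frac{|A_2|}{2}x_{u^*}$ from the triangle edges. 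The bound would fail for small $\rho^*$, which is consistent with the hypothesis $m\ge11$ and the separate handling of exceptional sizes in the statement.
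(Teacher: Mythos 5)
Your proposal is correct and follows essentially the same route as the paper: both expand ${\rho^*}^2 x_{u^*}$ over the partition $\{u^*\}\cup A_1\cup A_2\cup B$, pin down $x_v=\frac{x_{u^*}}{\rho^*-1}<\frac{x_{u^*}}{2}$ on the triangle vertices $A_2$ using $\rho^*>\sqrt{m-2}\ge 3$ (Lemma \ref{le-4.1}), and thereby absorb the triangle contribution into the edge count to get ${\rho^*}^2\le m-e(B)$, which combined with ${\rho^*}^2>m-2$ forces $e(B)<2$. Your derivation of $x_v=x_{v'}$ from the paired eigen-equations is a slightly more careful version of the paper's appeal to symmetry, but the argument is the same.
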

\begin{proof}
By Claims \ref{claim-4.1}  and \ref{claim-4.2}, we know that  $A_2$ induces some independent edges and $N_B(A_2)=\emptyset$.
Recall that $A_1$ induces some isolated vertices.
Clearly,  $\sum_{i \in A_1}d_{A_1}(i)x_i=0$.
From Lemma \ref{le-4.1}, $\rho^*>\sqrt{m-2}\ge3$ for $m\ge11$.
By symmetry,  for any $i, j \in A_2$, we have $x_i=x_j$
and so $\rho^*x_i=x_{u^*}+x_j= x_{u^*}+x_i$,
which induces $x_i= \frac{x_{u^*}}{\rho^*-1}<\frac{x_{u^*}}{2}$.
Then we have
\begin{eqnarray*}
{\rho^*}^2x_{u^*}&=&d(u^*)x_{u^*}+\sum_{i \in A_1}d_{A_1}(i)x_i+\sum_{i \in A_2}d_{A_2}(i)x_i+\sum_{i \in B}d_A(i)x_i\\
&<&d(u^*)x_{u^*}+\frac{x_{u^*}}{2}\sum_{i \in A_2}d_{A_2}(i)+e(A,B)x_{u^*}\\
&=&d(u^*)x_{u^*}+\frac{x_{u^*}}{2}\cdot2e(A_2)+e(A,B)x_{u^*}\\
 &= &(d(u^*)+e(A)+e(A,B))x_{u^*}\\
 &=&(m-e(B))x_{u^*}.
\end{eqnarray*}
Combining it with $(m-2)x_{u^*}<{\rho^*}^2x_{u^*}$,  we have $e(B)<2$. It follows the result.
\end{proof}

If $e(B)=1$, we may denote $e=w_1^*w_2^*$ the unique edge in $G^*[B]$ in what follows.
Without loss of generality, we may assume $d_A(w_1^*)\le d_A(w_2^*)$.

\begin{clm}\label{claim-4.3}
$G^*[\{u^*, w_1^*, w_2^*\}\cup N_{A_1}(\{w_1^*, w_2^*\})] \cong C_5$.
 \end{clm}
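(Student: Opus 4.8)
The plan is to pin down the two endpoints $w_1^*,w_2^*$ of the unique edge of $G^*[B]$ by showing each has exactly one neighbour in $A_1$ and that these two neighbours are distinct; the asserted $C_5$ is then just $u^*aw_1^*w_2^*bu^*$ for $N_{A_1}(w_1^*)=\{a\}$ and $N_{A_1}(w_2^*)=\{b\}$. First I record the easy facts. Since $\delta(G^*)=2$ (Lemma \ref{le-2.3}) and $w_1^*w_2^*$ is the only edge inside $B$, each $w_i^*$ has at least one neighbour in $A$; by Claim \ref{claim-4.2} none of these neighbours lies in $A_2$, so $S_i:=N_{A_1}(w_i^*)\neq\emptyset$. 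It remains to prove $|S_1|=|S_2|=1$ and $S_1\cap S_2=\emptyset$.

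For disjointness I would use the chord-freeness of $G^*$ (Lemma \ref{le-2.6}). If some $a\in S_1\cap S_2$, then $aw_1^*w_2^*$ is a triangle; moreover, if either $w_i^*$ had a second neighbour $a'\in A_1$, the five-cycle $u^*a'w_1^*w_2^*au^*$ would carry the chord $aw_1^*$, a contradiction. Hence the only surviving possibility with a common neighbour is $S_1=S_2=\{a\}$, i.e.\ a triangle $aw_1^*w_2^*$ forming a leaf block with cut vertex $a$. This I exclude spectrally: since $x_{u^*}=\max_i x_i\ge x_a$ and $w_1^*,w_2^*\in N(a)\setminus N(u^*)$, transferring this leaf block from $a$ to $u^*$ (Lemma \ref{block-2} keeps the graph minimally $2$-edge-connected and Lemma \ref{le-2.9} then gives $\rho>\rho^*$) contradicts the maximality of $\rho^*$. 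Therefore $S_1\cap S_2=\emptyset$.

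Finally I would force $|S_1|=|S_2|=1$. Assume without loss of generality $|S_1|\le|S_2|$. Writing the eigen-equations for the symmetric vertices of $S_1$, of $S_2$, and for $w_1^*,w_2^*$, one eliminates the leaf coordinates and obtains $({\rho^*}^2+\rho^*-|S_1|)(x_{w_1^*}-x_{w_2^*})=(|S_1|-|S_2|)(x_{u^*}+x_{w_2^*})\le 0$, so $x_{w_1^*}\le x_{w_2^*}$. If $|S_1|\ge 2$, then moving one vertex of $S_1=N(w_1^*)\setminus N(w_2^*)$ from $w_1^*$ to the heavier $w_2^*$ keeps $G^*$ minimally $2$-edge-connected and, by Lemma \ref{le-2.9}, strictly increases $\rho$, a contradiction; hence $|S_1|=1$. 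The remaining and genuinely harder point is $|S_2|=1$: here the usual vertex-shift pushes eigenvector mass the wrong way, so instead I would detach all but one leaf of $w_2^*$ and re-attach them as the common neighbours of $u^*$ and a fresh $B$-vertex (creating a separate $K_{2,\cdot}$ with hub $u^*$), an operation preserving the size and minimal $2$-edge-connectivity. That this strictly raises $\rho$ --- equivalently, that a $C_5$ sharing $u^*$ with a $K_{2,k}$ beats the concentrated graph $H(1,k{+}1)$ --- is the main obstacle, and I would verify it by comparing the two quotient matrices (or by bounding with Lemma \ref{le-2.12}); it is exactly the phenomenon isolated later in Proposition \ref{claim-4.9}. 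With $|S_1|=|S_2|=1$ and $S_1\cap S_2=\emptyset$, say $S_1=\{a\}$ and $S_2=\{b\}$, the induced subgraph has vertex set $\{u^*,w_1^*,w_2^*,a,b\}$ and precisely the edges $u^*a,aw_1^*,w_1^*w_2^*,w_2^*b,bu^*$ (the non-edges $u^*w_1^*,u^*w_2^*,ab,aw_2^*,bw_1^*$ follow from $w_i^*\in B$, from $A_1$ being independent, and from disjointness), i.e.\ a $C_5$.
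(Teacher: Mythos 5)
Your first two steps are sound and essentially reproduce the paper's own argument: the chorded $5$-cycle when some $w_i^*$ has a second $A_1$-neighbour, and the leaf-block transfer (Lemma \ref{block-2} plus Lemma \ref{le-2.9}) to kill the pure-triangle case. Your route to $|S_1|=1$, however, already has gaps. The eigen-equation manipulation silently assumes that all vertices of $S_1$ (resp.\ $S_2$) carry the same Perron entry, i.e.\ that their neighbourhoods are exactly $\{u^*,w_1^*\}$ (resp.\ $\{u^*,w_2^*\}$); at this stage nothing forbids such a vertex from having extra neighbours in $B_1$, so this needs its own chord argument. Moreover, your shift of one leaf from $w_1^*$ to $w_2^*$ keeps the graph minimally $2$-edge-connected only when $|S_1|=2$: if $|S_1|\ge3$ the new block $H(|S_1|-1,|S_2|+1)$ still contains $H(2,2)$, whose middle edge is removable, so the modified graph leaves the family and no contradiction with maximality is available. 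The paper avoids all of this with one observation you missed: if $|S_1|,|S_2|\ge2$ then $G^*$ contains $H(2,2)$, which is $2$-edge-connected but \emph{not minimally} so, contradicting Lemma \ref{le-2.11} outright.

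The fatal gap is your final step ($|S_2|=1$). The inequality you need --- that replacing the block $H(1,t_2)$ by $C_5$ and $K_{2,t_2-1}$ glued at $u^*$ raises the spectral radius --- is false; it goes the other way. Concretely, for $t_2=4$ (so $m=11$, the smallest case of the theorem) one has $H(1,4)\cong SK_{2,5}$ with $\rho\approx 3.08$ (largest root of $x^3-x^2-9x+8$, Lemma \ref{le-2.8}), while $C_5*K_{2,3}$, a minimally $2$-edge-connected graph of the same size $11$, has $\rho\approx 2.74$; similarly $\rho(SK_{2,4})\approx2.75$ beats $\rho(C_4*C_5)\approx2.43$ at $m=9$. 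Attaching any common remainder of $G^*$ at $u^*$ cannot reverse this: at every admissible value of $\rho$ the block $H(1,t_2)$ feeds more into the eigenvalue equation at $u^*$ than $C_5$ and $K_{2,t_2-1}$ together, which is exactly what the quotient-matrix comparison you propose would reveal. (For $t_2=2$ your operation does not even produce a $2$-edge-connected graph, since the fresh $B$-vertex gets degree $1$.) You have also inverted the content of Proposition \ref{claim-4.9}: it says that destroying a $C_5$ in favour of a triangle plus an enlarged $K_{2,\cdot}$ \emph{increases} $\rho$, i.e.\ pentagon blocks are spectrally bad, the opposite of what your move creates. The paper's actual argument for $t_2=1$ is a single edge switch: $G'=G^*-w_1^*w_2^*+w_1^*u^*$ turns the block $H(1,t_2)$ into $K_{2,t_2}*K_3$, which is still minimally $2$-edge-connected, and Lemma \ref{le-2.9} (with $x_{u^*}\ge x_{w_2^*}$) gives $\rho(G')>\rho(G^*)$, the desired contradiction. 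That one-edge move is the idea your proof is missing.
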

\begin{proof}
Firstly, we will show $N_{A_1}(w_1^*)\cap N_{A_1}(w_2^*)=\emptyset$.
Otherwise, let $u_0\in A_1$ be the common vertex.
If $\{u_0,w_1^*,w_2^*\}$ induces a $3$-cycle, then $u_0w_1^*w_2^*$ a leaf block of $G^*$. Thus  $G'=G^*-u_0w_1^*-u_0w_2^*+u^*w_1^*+u^*w_2^*$ is minimally 2-edge-connected and $\rho(G')> \rho(G^*)$ by Lemma \ref{le-2.9}. Therefore,  there exists another vertex $u_2\in A_1$ that is adjacent to at least one of $\{w_1^*, w_2^*\}$. It follows that
$$\left\{\begin{array}{ll}
C=w_2^*u_0u^*u_2w_1^* \mbox{ is a cycle with the chord $u_0w_1^*$ }& \mbox{ if $u_2\sim w_1^*$, }\\
C=u^*u_0w_1^*w_2^*u_2 \mbox{ is a cycle with the chord $u_0w_2^*$ }& \mbox{ if $u_2\sim w_2^*$, }\\
\end{array}\right.
$$
which always leads a contradiction.

Secondly, we will show $d_{A_1}(w_1^*)=1$ and $d_{A_1}(w_2^*)\ge1$. In fact,
since $\delta(G^*)=2$ and $N_{A_2}({w_1^*})=N_{A_2}({w_2^*})=\emptyset$ by Claim \ref{claim-4.2},
we have $d_{A_1}(w_1^*), d_{A_1}(w_2^*)\ge 1$.
If $d_{A_1}(w_1^*), d_{A_1}(w_2^*)\ge 2$, then $G^*$ contains  $H(2, 2)$ (see Fig.\ref{fig-2}) as a subgraph.
We see that $H(2, 2)$ is not minimally 2-edge-connected, which contradicts   Lemma \ref{le-2.11}.

Combining the above two facts, we have $G^*[\{u^*, w_1^*, w_2^*\}\cup N_{A_1}(\{w_1^*, w_2^*\})]\cong H(1,t_2)$  for some positive $t_2\ge 1$,  and clearly
$H(1,t_2)$ is a leaf block of $G^*$.
Finally,  it suffices to show $t_2= 1$.
Suppose to the contrary  that  $t_2\ge 2$. Now let $N_{A_1}(w_1^*)=\{u_0\}$ and $N_{A_1}(w_2^*)=\{u_1, u_2, \ldots,u_{t_2}\}$.
Then $G'=G^*-w_1^*w_2^*+w_1^*u^*$
is a graph obtained from $G$ by replacing the  block $H(1,t_2)$ to $K_{2,t_2}*K_3$.
Thus, $G'$ is also minimally 2-edge-connected. By Lemma \ref{le-2.9},  we have $\rho(G')>\rho(G^*)$, a contradiction.
\end{proof}

Denote by $B_1$ the set of all isolated vertices in $B$. By Claim \ref{claim-4.2'}, $B=B_1\cup\{w_1^*w_2^*\}$.
By Claim \ref{claim-4.3}, we may assume that  $N_{A_1}(w_1^*)=\{v_1^*\}$ and $N_{A_1}(w_2^*)=\{v_2^*\}$ in what follows.

\begin{clm}\label{claim-4.3'}
$N_{B_1}(\{v_1^*, v_2^*\})=\emptyset$.
\end{clm}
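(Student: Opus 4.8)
The plan is to assume for contradiction that some vertex $z\in B_1$ is adjacent to one of $v_1^*,v_2^*$. By the reflection symmetry of the $C_5$ produced in Claim~\ref{claim-4.3} (which swaps $v_1^*\leftrightarrow v_2^*$ and $w_1^*\leftrightarrow w_2^*$, both $w_i^*$ now having degree $2$) I may assume $z\sim v_1^*$. First I would pin down $N(z)$: since $z\in B_1$ it has no neighbour inside $B$, and being in $B$ it is not adjacent to $u^*$; by Claim~\ref{claim-4.2} no vertex of $A_2$ has a neighbour in $B$. Hence every neighbour of $z$ lies in $A_1$, and because $\delta(G^*)=2$ (Lemma~\ref{le-2.3}) the vertex $z$ has at least one further neighbour $u'\in A_1\setminus\{v_1^*\}$. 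The argument then splits according to whether $u'$ can be taken different from $v_2^*$.

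The main case is when $z$ has a neighbour $u'\in A_1\setminus\{v_1^*,v_2^*\}$. Here I would simply exhibit the closed walk
\[
z\,v_1^*\,w_1^*\,w_2^*\,v_2^*\,u^*\,u'\,z ,
\]
whose seven vertices are pairwise distinct (as $u'\in A_1$ we have $u'\neq u^*,w_1^*,w_2^*,z$, and in this case $u'\neq v_1^*,v_2^*$). This is a $7$-cycle, and the edge $u^*v_1^*\in E(G^*)$ joins two of its non-consecutive vertices and is therefore a chord, contradicting Lemma~\ref{le-2.6}. The identical walk with the roles of $v_1^*,v_2^*$ interchanged disposes of any $B_1$-neighbour of $v_2^*$ having a neighbour outside $\{v_1^*,v_2^*\}$. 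Thus the only surviving possibility is $N(z)=\{v_1^*,v_2^*\}$, and more generally that every $B_1$-neighbour of $v_1^*$ or $v_2^*$ is joined to exactly $\{v_1^*,v_2^*\}$.

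The remaining case is delicate because no chord is available: the subgraph on $u^*,v_1^*,v_2^*,w_1^*,w_2^*$ together with such $z$'s is triangle-free of girth four, so I must contradict the maximality of $\rho^*$ instead. The transformation I would use is
\[
\widetilde G = G^*-\{v_1^*w_1^*,\,v_2^*w_2^*\}+\{u^*w_1^*,\,u^*w_2^*\},
\]
which replaces the $3$-edge detour by a triangle $u^*w_1^*w_2^*$ and leaves all edges $z v_1^*,z v_2^*$ intact. In $\widetilde G$ the cut vertex $u^*$ carries two minimally $2$-connected leaf blocks, namely this triangle and a complete bipartite block $K_{2,k}$ ($k\ge 2$) with parts $\{v_1^*,v_2^*\}$ and $\{u^*\}\cup\{z,\dots\}$; hence $\widetilde G$ is again minimally $2$-edge-connected (Lemma~\ref{le-2.11} applied blockwise). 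To compare spectral radii I would apply Lemma~\ref{le-2.9} twice: first move $w_1^*$ from $v_1^*$ to $u^*$, which is legitimate since $u^*$ has the maximum Perron entry, so $\rho$ strictly increases; then, in the resulting graph, move $w_2^*$ from $v_2^*$ to $u^*$. This would give $\rho(\widetilde G)>\rho^*$, the desired contradiction.

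The hard part is the second application of Lemma~\ref{le-2.9}: it requires $u^*$ to still dominate $v_2^*$ in the Perron vector of the intermediate graph $G^*-v_1^*w_1^*+u^*w_1^*$, which is no longer minimally $2$-edge-connected, so the maximality of $x_{u^*}$ in $G^*$ cannot be quoted verbatim. I expect this to hold because $u^*$ only gained an edge while $v_2^*$ kept the same neighbours, but making it rigorous is the step needing care. An alternative that avoids tracking the intermediate vector is to perform both edge moves simultaneously via Lemma~\ref{7} with $V=\{w_1^*,w_2^*\}$, $U=\{v_1^*,v_2^*\}$, $W=\{u^*\}$ (note there are no edges between $V$ and $W$ in $G^*$); this instead demands the single inequality $x_{v_1^*}+x_{v_2^*}\le x_{u^*}$, which I would verify directly from the eigen-equations at $z,v_1^*,w_1^*$ combined with the bound $\rho^*>\sqrt{m-2}$ from Lemma~\ref{le-4.1}.
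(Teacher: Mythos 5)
The first half of your proposal is correct and coincides with the paper's argument: you show, exactly as the paper does, that any $z\in B_1$ adjacent to $v_1^*$ or $v_2^*$ must satisfy $N(z)=\{v_1^*,v_2^*\}$, via the $7$-cycle $z\,v_1^*\,w_1^*\,w_2^*\,v_2^*\,u^*\,u'\,z$ with chord $u^*v_1^*$, contradicting Lemma \ref{le-2.6}. The divergence — and the gap — is in the spectral comparison that follows. The paper's key observation, which you missed, is that once every such $z$ is joined to exactly $\{v_1^*,v_2^*\}$, the graph $G^*$ admits an automorphism swapping $v_1^*\leftrightarrow v_2^*$ and $w_1^*\leftrightarrow w_2^*$ and fixing everything else, so $x_{v_1^*}=x_{v_2^*}$ holds \emph{exactly}. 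Then a single application of Lemma \ref{le-2.9}, moving the one edge $w_2^*v_2^*$ to $w_2^*v_1^*$, produces a minimally $2$-edge-connected graph with strictly larger spectral radius — contradiction, with nothing left to verify.

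Your two substitutes for this step both fail. Route A (two successive applications of Lemma \ref{le-2.9}) has the gap you yourself flag: after the first move the Perron vector changes, and no argument is given that $u^*$ still dominates $v_2^*$ in the intermediate graph. Route B is worse than "needing care": the condition $x_{v_1^*}+x_{v_2^*}\le x_{u^*}$ required by Lemma \ref{7} is not a consequence of the ingredients you cite, and it is false for legitimate bad-case configurations. Indeed, writing $t$ for the number of such $z$'s, the eigen-equations at $z$, $w_i^*$, $v_i^*$ give
\begin{equation*}
x_{v_1^*}+x_{v_2^*}=\frac{2x_{u^*}}{h(\rho^*)},\qquad h(\rho^*)=\rho^*-\frac{1}{\rho^*-1}-\frac{2t}{\rho^*},
\end{equation*}
so you need $h(\rho^*)\ge 2$; but a priori $2t$ can be as large as $m-5$, and then $\rho^*>\sqrt{m-2}$ (Lemma \ref{le-4.1}) only yields $h(\rho^*)>\frac{3}{\rho^*}-\frac{1}{\rho^*-1}<1$. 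Whether $h(\rho^*)\ge 2$ holds depends on the blocks of $G^*$ attached at $u^*$ away from the $C_5$, which your ingredients never touch. Concretely, take the minimally $2$-edge-connected graph of size $m=39$ consisting of the $C_5=u^*v_1^*w_1^*w_2^*v_2^*$, eight vertices joined to both $v_1^*,v_2^*$, and six triangles through $u^*$: there $\rho\approx 4.80$, the maximum Perron entry is at $u^*$, yet $x_{v_1^*}+x_{v_2^*}\approx 1.65\,x_{u^*}$, so Lemma \ref{7} cannot be applied in your direction. Such a graph satisfies every structural conclusion available at this point of the proof (Claims \ref{claim-4.1}--\ref{claim-4.3} and the maximality of $x_{u^*}$), so your inequality could only be rescued by invoking the extremality of $G^*$ through a genuinely new argument — precisely what the paper's symmetry trick makes unnecessary.
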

\begin{proof}
Suppose to the contrary that there exists a vertex $w$ in $B_1$ with neighbor $v_1^*$ or $v_2^*$. Without loos of generality, we  assume  $N_{A_1}(w)=v_1^*$.
Notice that $d(w)\ge 2$.  We claim that $N_{A_1}(w)=\{v_1^*, v_2^*\}$. Since otherwise,
 $w\sim v\in A_1\backslash\{v_1^*, v_2^*\}$, then $u^*vwv_1^*w_1^*w_2^*v_2^*$ is  a cycle with the chord $v_1^*u^*$, a contradiction.
Thus,  $W=\{u^*,w,w_1^*,w_2^*,v_1^*,v_2^*\}$ induces a minimally  2-edge-connected leaf block of $G^*$.
By symmetry,  we also have $x_{v_1^*}=x_{v_2^*}$.
Let  $G'=G^*-w_2^*v_2^*+w_2^*v_1^*$.
We see that
$W$  also induces a   leaf block of  $G'$ and so
 $G'$ is also a minimally 2-edge connected graph.
 By Lemma \ref{le-2.9},  we have $\rho(G')>\rho(G^*)$,  a contradiction.
\end{proof}

Let $A_1'=A_1\backslash\{v_1^*,v_2^*\}$. By Claim \ref{claim-4.3'}, each vertex of \textcolor[rgb]{0.44,0.00,0.94}{$B_1$} only joins some vertices in $A_1'$. i.e. $N_{A_1}(w_i)=N_{A_1'}(w_i)$ for any $w_i\in B_1$.

\begin{clm}\label{claim-4.5}$|B_1|\ge 2$ and
$|N_{A_1}(w_1)\cap N_{A_1}(w_2)|=0$ or $2$ for any $w_1\not=w_2\in B_1$.
\end{clm}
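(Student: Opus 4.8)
The plan is to work throughout under the standing hypothesis $e(B)=1$ of this block of claims, so that $B=B_1\cup\{w_1^*,w_2^*\}$ and $u^*v_1^*w_1^*w_2^*v_2^*$ is the $C_5$ of Claim~\ref{claim-4.3}. First I would record the structural facts that drive everything: since $A_1'\subseteq A=N(u^*)$, the vertex $u^*$ is adjacent to every vertex of $A_1'$, whereas $u^*$ is adjacent to no vertex of $B_1$. Combining Claim~\ref{claim-4.2} ($N_B(A_2)=\emptyset$), Claim~\ref{claim-4.3} ($N_{A_1}(w_i^*)=\{v_i^*\}$) and Claim~\ref{claim-4.3'} ($N_{B_1}(\{v_1^*,v_2^*\})=\emptyset$) shows that every neighbour of a vertex $w\in B_1$ lies in $A_1'$, and by $\delta(G^*)=2$ (Lemma~\ref{le-2.3}) we have $|N_{A_1'}(w)|\ge2$. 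The single tool used for the dichotomy is Lemma~\ref{le-2.6}: no cycle of $G^*$ has a chord.

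For the intersection dichotomy I would fix distinct $w_1,w_2\in B_1$ and route a $6$-cycle through $u^*$, using that $u^*$ sees all of $A_1'$ to force a forbidden chord. If $|N_{A_1}(w_1)\cap N_{A_1}(w_2)|\ge3$, pick three common neighbours $a,b,c\in A_1'$; then $w_1au^*bw_2c$ is a $6$-cycle and $u^*c$ is a chord, contradicting Lemma~\ref{le-2.6}. If $|N_{A_1}(w_1)\cap N_{A_1}(w_2)|=1$ with common neighbour $a$, choose $b\in N_{A_1'}(w_1)\setminus\{a\}$ and $c\in N_{A_1'}(w_2)\setminus\{a\}$ (possible since each $w_i$ has at least two neighbours in $A_1'$), where $b\ne c$ because $a$ is the unique common neighbour; then $w_1bu^*cw_2a$ is a $6$-cycle with chord $u^*a$, again contradicting Lemma~\ref{le-2.6}. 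Hence $|N_{A_1}(w_1)\cap N_{A_1}(w_2)|\in\{0,2\}$.

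The remaining inequality $|B_1|\ge2$ I would prove by extremality, excluding $|B_1|\le1$. Observe first that each $a\in A_1'$ has $u^*$ and only vertices of $B_1$ as neighbours, so $d(a)\ge2$ forces $a$ to have a neighbour in $B_1$; in particular $B_1=\emptyset$ implies $A_1'=\emptyset$. If $|B_1|=0$, then $A_1=\{v_1^*,v_2^*\}$, $A_2$ spans the $t_1$ triangles of Claims~\ref{claim-4.1}--\ref{claim-4.2}, and $G^*\cong F_1(t_1,0,0)$ (the friendship graph $F_{t_1}$ with a $C_5$ glued at $u^*$), of size $m=3t_1+5$. Viewing $G^*=(F_{t_1},u^*)*(C_5,\cdot)$ and applying Lemma~\ref{le-2.12} gives $\rho(G^*)^2<\rho(F_{t_1})^2+4=\frac{1+\sqrt{8t_1+1}}{2}+2t_1+4$, and a short estimate (with direct evaluation at the two small values $m=11,17$) yields $\rho(G^*)<\sqrt{m-2}$, contradicting Lemma~\ref{le-4.1}. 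If $|B_1|=1$, say $B_1=\{w\}$, then every vertex of $A_1'$ is adjacent to $w$, so $N(w)=A_1'=\{a_1,\dots,a_k\}$ with $k\ge2$ and $\{u^*,w\}\cup A_1'$ spans a block $K_{2,k}$; hence $G^*\cong F_1(t_1,k,0)$ of size $m=3t_1+2k+5$. Since $k\ge2$, Proposition~\ref{claim-4.9} applies and gives $\rho(G^*)=\rho(F_1(t_1,k,0))<\rho(F_0(t_1+1,k+1,0))$, with $F_0(t_1+1,k+1,0)$ a minimally $2$-edge-connected graph of the same size $m$, contradicting the maximality of $\rho(G^*)$. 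Both cases being impossible, $|B_1|\ge2$.

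The main obstacle is the case $|B_1|=0$: Lemma~\ref{le-2.12} is not sharp enough at the smallest admissible sizes, so a couple of explicit eigenvalue computations are needed there, and one must be careful that the two reductions really exhaust $|B_1|\le1$, because the natural Proposition~\ref{claim-4.9} target $F_0(t_1+1,1,0)$ degenerates (its $K_{2,1}$ block is not $2$-edge-connected) exactly when $|B_1|=0$, which is why that case is handled by a direct spectral bound instead. The dichotomy part, by contrast, is a clean chord-forcing argument and should present no difficulty.
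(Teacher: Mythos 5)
Your proof is correct, and where it overlaps the paper's it is essentially the same argument; moreover, your standing hypothesis $e(B)=1$ is the right reading of the claim, since for $e(B)=0$ the assertion $|B_1|\ge2$ is simply false (for $K_{2,\frac{m-3}{2}}*K_3$ itself, $B$ consists of the single vertex opposite $u^*$ in the $K_{2,\frac{m-3}{2}}$ part). The intersection dichotomy is the paper's chord-forcing idea, and your version is in fact the corrected one: the paper's ``$5$-cycle $C_2=v_1w_1v_2w_2v_3$'' is not a cycle (there is no edge $v_1v_3$, both vertices lying in the independent set $A_1$); it must be closed through $u^*$ into a $6$-cycle exactly as you do. Your $|B_1|=1$ case --- the forced block $K_{2,k}$ with $k\ge2$, hence $G^*\cong F_1(t_1,k,0)$, then Proposition \ref{claim-4.9} against maximality --- is precisely the paper's proof.

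The genuine difference is the case $|B_1|=0$, which you handle and the paper does not: its proof begins ``we may assume $B_1=\{w_1\}$'' and never excludes $B_1=\emptyset$, although $F_1(t_1,0,0)$ (the friend graph $F_{t_1}$ with a $C_5$ attached at $u^*$) is a minimally $2$-edge-connected configuration of odd size $m=3t_1+5$ compatible with Claims \ref{claim-4.1}--\ref{claim-4.3'}. You also correctly diagnose why the paper's usual device fails here: Proposition \ref{claim-4.9} with $t_2=0$ compares against $F_0(t_1+1,1,0)$, whose $K_{2,1}$ part creates pendant vertices, so this comparison graph is not minimally $2$-edge-connected and yields no contradiction with the maximality of $\rho(G^*)$. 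Your substitute argument is sound: Lemma \ref{le-2.12} gives $\rho(G^*)^2<\rho(F_{t_1})^2+4=\frac{1+\sqrt{8t_1+1}}{2}+2t_1+4$, which is less than $m-2=3t_1+3$ whenever $t_1\ge5$, and the two remaining values $t_1=2,4$ (i.e.\ $m=11,17$) check out directly: the quotient characteristic polynomial of $F_1(2,0,0)$ is $x^4-2x^3-6x^2+9x+2$ and that of $F_1(4,0,0)$ is $x^4-2x^3-10x^2+13x+6$, and both are positive and increasing for $x\ge\sqrt{m-2}$, whence $\rho<\sqrt{m-2}$, contradicting Lemma \ref{le-4.1}. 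So your proposal does not merely reprove the claim --- it closes a real gap in the paper's own proof.
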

\begin{proof}Firstly, we show that $|B_1|\ge 2$. Otherwise,
we may assume $B_1=\{w_1\}$, then
$G^*[N_{A_1}(w_1)\cup \{w_1,u^*\}]\cong K_{2, a_1}$, where $a_1=|N_{A_1}(w_1)|$.
By Claims \ref{claim-4.2}, \ref{claim-4.3} and \ref{claim-4.3'}, we have $G^*\cong F_1(t_1,a_2,0)$ for some positive $t_1\ge0$, $a_2\ge2$ and $3t_1+2a_2+7=m(G^*)=m$. By Proposition \ref{claim-4.9}, we know that
$\rho(G^*)=\rho(F_1(t_1,a_2,0))<\rho(F_0(t_1+1,a_2+1,0)$, a contradiction.

Suppose that $|N_{A_1}(w_1)\cap N_{A_1}(w_2)|\ge 3$, let $\{v_1,v_2,v_3\}\subseteq N_{A_1}(w_1)\cap N_{A_1}(w_2)$, then  $G^*$ contains a $5$-cycle $C_2=v_1w_1v_2w_2v_3$ with  chord $v_1w_2$, a contradiction.
Next we show that $|N_{A_1}(w_1)\cap N_{A_1}(w_2)|\not=1$. Otherwise, let $N_{A_1}(w_1)\cap N_{A_1}(w_2)=\{v\}$, then  $G^*$ contains a $6$-cycle $C_1=u^*w_1'w_1vw_2w_2'$ with  chord $u^*v$, where $w_1'\in N_{A_1}(w_1)\backslash v$ and $w_2'\in N_{A_1}(w_2)\backslash v$, it is a contradiction.
Thus,  we have $|N_{A_1}(w_1)\cap N_{A_1}(w_2)|=0$ or $2$.
\end{proof}

Notice that $d_{A_1}(w_i)\ge2$ for any $w_i\in B_1$. By Claims \ref{claim-4.1}-\ref{claim-4.5} we can get the structure of $G^*$ shown as Fig.\ref{G^*}.  In particular, if $e(B)=\emptyset$, then $G^*$ contains no  5-cycle $C=u^*v_1^*w_1^*w_2^*v_2^*$.
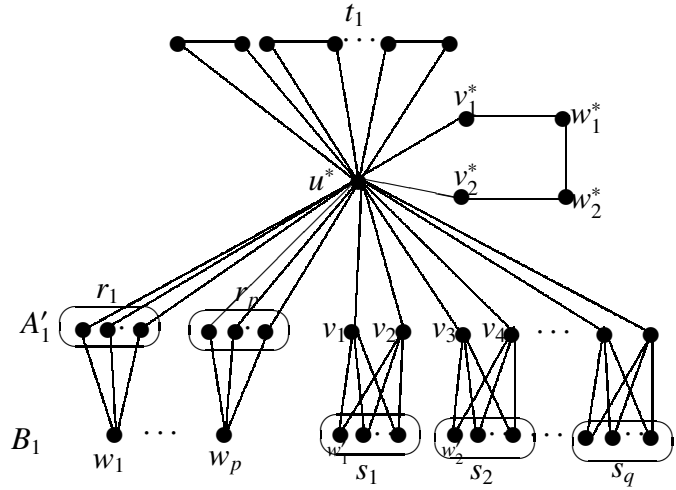
\begin{figure}[h]
\begin{center}
\centering \setlength{\unitlength}{2.4pt}
\unitlength 2.1mm 
\linethickness{0.4pt}
\ifx\plotpoint\undefined\newsavebox{\plotpoint}\fi 
\begin{picture}(40.888,29.116)(0,0)
\put(5.256,9.355){\oval(6.307,2.523)[]}
\put(13.454,9.144){\oval(6.307,2.523)[]}
\put(21.758,2.522){\oval(6.307,2.523)[]}
\put(29.011,2.312){\oval(6.307,2.523)[]}
\put(37.735,2.102){\oval(6.307,2.523)[]}
\multiput(3.364,9.355)(.03350725,-.1035942){69}{\line(0,-1){.1035942}}
\multiput(5.676,2.207)(-.0323077,.5498462){13}{\line(0,1){.5498462}}
\put(5.256,9.355){\line(0,1){0}}
\multiput(5.676,2.207)(.03355319,.14985106){47}{\line(0,1){.14985106}}
\put(7.253,9.25){\line(0,1){0}}
\multiput(11.562,9.25)(.0328438,-.2102188){32}{\line(0,-1){.2102188}}
\multiput(12.613,2.523)(.032875,.4204375){16}{\line(0,1){.4204375}}
\put(13.139,9.25){\line(1,0){.105}}
\multiput(12.613,2.523)(.03338824,.08037647){85}{\line(0,1){.08037647}}
\multiput(19.761,2.523)(.0326207,.2319655){29}{\line(0,1){.2319655}}
\multiput(20.707,9.25)(.0334545,-.3201364){22}{\line(0,-1){.3201364}}
\multiput(21.443,2.207)(.045,.030143){7}{\line(1,0){.045}}
\multiput(20.812,9.355)(.03332927,-.08717073){82}{\line(0,-1){.08717073}}
\multiput(23.545,2.207)(.0315,.7148){10}{\line(0,1){.7148}}
\multiput(23.86,9.355)(-.03356944,-.09927778){72}{\line(0,-1){.09927778}}
\put(21.443,2.207){\line(1,0){.105}}
\multiput(19.761,2.418)(.033598361,.055139344){122}{\line(0,1){.055139344}}
\multiput(27.644,9.25)(-.0334091,-.3249091){22}{\line(0,-1){.3249091}}
\multiput(28.485,2.102)(-.03364,.27752){25}{\line(0,1){.27752}}
\multiput(27.644,9.04)(.033597938,-.071525773){97}{\line(0,-1){.071525773}}
\put(30.903,2.102){\line(0,1){6.727}}
\multiput(26.909,2.207)(.033563025,.060067227){119}{\line(0,1){.060067227}}
\multiput(28.485,2.207)(.03358333,.09636111){72}{\line(0,1){.09636111}}
\multiput(35.423,1.997)(.03302857,.21022857){35}{\line(0,1){.21022857}}
\multiput(36.579,9.355)(.0323077,-.5740769){13}{\line(0,-1){.5740769}}
\multiput(36.999,1.892)(.03369231,.09298718){78}{\line(0,1){.09298718}}
\put(39.627,9.145){\line(0,-1){7.253}}
\multiput(39.627,1.892)(-.033494505,.079703297){91}{\line(0,1){.079703297}}
\put(36.579,9.145){\line(0,1){0}}
\multiput(35.633,2.102)(.033598361,.057729508){122}{\line(0,1){.057729508}}
\multiput(3.364,9.04)(.0613451957,.0336654804){281}{\line(1,0){.0613451957}}
\multiput(20.602,18.5)(-.0564191176,-.0336213235){272}{\line(-1,0){.0564191176}}
\multiput(7.253,9.25)(.0486263345,.0336654804){281}{\line(1,0){.0486263345}}
\put(20.917,18.71){\line(-1,-1){9.355}}
\put(11.562,9.355){\line(0,1){0}}
\multiput(13.139,9.145)(.033708861,.04035865){237}{\line(0,1){.04035865}}
\multiput(21.128,18.71)(-.033616279,-.055){172}{\line(0,-1){.055}}
\multiput(20.812,9.25)(.0323846,.7276923){13}{\line(0,1){.7276923}}
\multiput(21.233,18.71)(.03332927,-.11920732){82}{\line(0,-1){.11920732}}
\multiput(21.022,18.71)(.033614213,-.050690355){197}{\line(0,-1){.050690355}}
\put(27.644,8.724){\line(0,1){0}}
\multiput(21.022,18.71)(.0340912162,-.0337364865){296}{\line(1,0){.0340912162}}
\multiput(21.233,18.605)(.0538362369,-.0336933798){287}{\line(1,0){.0538362369}}
\multiput(21.128,18.71)(.0634948805,-.0337235495){293}{\line(1,0){.0634948805}}
\multiput(21.022,18.815)(.057086207,.033525862){116}{\line(1,0){.057086207}}
\put(27.644,22.704){\line(1,0){6.727}}
\multiput(21.128,18.605)(.033585799,.052242604){169}{\line(0,1){.052242604}}
\put(26.804,27.434){\line(-1,0){3.994}}
\multiput(22.809,27.434)(-.03371698,-.16658491){53}{\line(0,-1){.16658491}}
\multiput(21.022,18.605)(-.03343182,.20065909){44}{\line(0,1){.20065909}}
\put(19.551,27.434){\line(-1,0){4.204}}
\multiput(15.346,27.434)(.03364,-.052857143){175}{\line(0,-1){.052857143}}
\multiput(21.233,18.184)(-.033657895,.040109649){228}{\line(0,1){.040109649}}
\put(13.665,27.434){\line(-1,0){4.204}}
\multiput(9.46,27.434)(.0431927273,-.0336363636){275}{\line(1,0){.0431927273}}
\put(20.917,18.815){\line(5,-1){6.832}}
\put(27.75,17.449){\line(1,0){6.412}}
\put(34.161,17.449){\line(0,1){5.045}}
\put(3.574,9.15){\circle*{1}}
\put(5.15,9.15){\circle*{1}}
\put(7.253,9.15){\circle*{1}}
\put(11.562,9.04){\circle*{1}}
\put(13.244,9.04){\circle*{1}}
\put(15.136,9.04){\circle*{1}}
\put(20.602,9.04){\circle*{1}}
\put(23.86,9.04){\circle*{1}}
\put(27.644,8.829){\circle*{1}}
\put(30.693,8.829){\circle*{1}}
\put(36.579,8.829){\circle*{1}}
\put(39.522,8.829){\circle*{1}}
\put(39.522,2.323){\circle*{1}}
\put(37.105,2.323){\circle*{1}}
\put(35.423,2.323){\circle*{1}}
\put(30.798,2.523){\circle*{1}}
\put(28.59,2.523){\circle*{1}}
\put(27.119,2.523){\circle*{1}}
\put(23.545,2.523){\circle*{1}}
\put(21.338,2.523){\circle*{1}}
\put(19.866,2.523){\circle*{1}}
\put(12.508,2.523){\circle*{1}}
\put(5.571,2.523){\circle*{1}}
\put(21.022,18.5){\circle*{1}}
\put(27.855,22.494){\circle*{1}}
\put(27.539,17.554){\circle*{1}}
\put(33.951,22.494){\circle*{1}}
\put(34.161,17.554){\circle*{1}}
\put(9.565,27.3){\circle*{1}}
\put(13.665,27.3){\circle*{1}}
\put(15.241,27.3){\circle*{1}}
\put(19.551,27.3){\circle*{1}}
\put(22.914,27.3){\circle*{1}}
\put(26.804,27.3){\circle*{1}}
\put(20.812,29.116){\makebox(0,0)[cc]{$t_1$}}
\put(27.85,23.755){\makebox(0,0)[cc]{$v_1^*$}}
\put(27.85,18.7){\makebox(0,0)[cc]{$v_2^*$}}
\put(35.45,22.599){\makebox(0,0)[cc]{$w_1^*$}}
\put(35.45,17.238){\makebox(0,0)[cc]{$w_2^*$}}
\put(5.15,11.457){\makebox(0,0)[cc]{$r_1$}}
\put(13.98,11.142){\makebox(0,0)[cc]{$r_p$}}
\put(5.256,.726){\makebox(0,0)[cc]{$w_1$}}
\put(12.613,.726){\makebox(0,0)[cc]{$w_p$}}
\put(8.619,2.418){\makebox(0,0)[cc]{$\cdots$}}
\put(21.548,0){\makebox(0,0)[cc]{$s_1$}}
\put(28.801,0){\makebox(0,0)[cc]{$s_2$}}
\put(37.735,0){\makebox(0,0)[cc]{$s_q$}}
\put(33.531,8.829){\makebox(0,0)[cc]{$\cdots$}}
\put(33.11,2.312){\makebox(0,0)[cc]{$\cdots$}}
\put(21.222,27.329){\makebox(0,0)[cc]{$\cdots$}}
\put(.631,9.355){\makebox(0,0)[cc]{$A_1'$}}
\put(0,2.102){\makebox(0,0)[cc]{$B_1$}}
\put(18.71,18.71){\makebox(0,0)[cc]{$u^*$}}
\put(19.536,9.){\makebox(0,0)[cc]{$v_1$}}
\put(22.699,9.){\makebox(0,0)[cc]{$v_2$}}
\put(26.473,9){\makebox(0,0)[cc]{$v_3$}}
\put(29.627,9.){\makebox(0,0)[cc]{$v_4$}}
\put(19.746,1.416){\makebox(0,0)[cc]{\scriptsize$w_1'$}}
\put(26.999,1.416){\makebox(0,0)[cc]{\scriptsize$w_2'$}}
\put(22.374,2.35){\makebox(0,0)[cc]{$\cdots$}}
\put(29.627,2.35){\makebox(0,0)[cc]{$\cdots$}}
\put(38.246,2.35){\makebox(0,0)[cc]{$\cdots$}}
\put(6.187,9.079){\makebox(0,0)[cc]{$\cdots$}}
\put(14.175,9.079){\makebox(0,0)[cc]{$\cdots$}}
\end{picture}
\vspace{-0.4cm}
\end{center}
\caption{\footnotesize{The structure  of $G^*$, where $t_1\ge0$, $p+ \sum_{i=1}^qs_i=|B_1|$ and $\sum_{i=1}^pr_i+ 2q=|A_1'|$.}}\label{G^*}
\end{figure}
For nonnegative  integer $p$, $q$,  $G^*[N_{A_1}(B_1)\cup B_1]\cong \bigcup_{i=1}^pK_{1,r_i}\bigcup_{j=1}^qK_{2,s_j}$ and satisfy  $3t_1+2\sum_{i=1}^pr_i+2\sum_{i=1}^qs_i+2q+5=m$ ($r_i$, $s_i\ge2$).
Furthermore, we will determine the values of $p,q$.
\begin{clm}\label{claim-4.4}
$p\le1$ and $q\le1$.
\end{clm}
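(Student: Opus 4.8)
The plan is to prove both inequalities by contradiction: if $G^*$ carried two star blocks (for $p\ge 2$) or two book blocks (for $q\ge 2$), I would merge the smaller one into the larger by a single edge‑relocation and thereby produce a minimally $2$-edge-connected graph of the same size with strictly larger spectral radius. The engine is Lemma \ref{7}, and the only preliminary input I need is the monotonicity of the relevant Perron weights in the block size. By the symmetry forced by Claims \ref{claim-4.1}--\ref{claim-4.5}, in a star block $K_{2,r_i}$ (where $u^*$ is a hub) all leaves carry one weight and the non-$u^*$ hub $w_i$ satisfies $x_{w_i}=\frac{r_i\,x_{u^*}}{{\rho^*}^2-r_i}$, whereas in a book block $K_{2,s_j+1}$ (where $u^*$ is a leaf) each hub $a_j,b_j$ satisfies $x_{a_j}=x_{b_j}=\frac{\rho^*\,x_{u^*}}{{\rho^*}^2-2s_j}$. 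Since $K_{2,r_i}$ and $K_{2,s_j+1}$ are proper subgraphs, Lemma \ref{lem-2.6'} gives ${\rho^*}^2>2r_i$ and ${\rho^*}^2>2s_j$, so both expressions are positive and strictly increasing in the block size.

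For $p\le 1$ I would take two star blocks $K_{2,r_1},K_{2,r_2}$ with (after relabelling) $r_1\ge r_2\ge 2$, hub pairs $\{u^*,w_1\},\{u^*,w_2\}$ and disjoint leaf sets $R_1,R_2\subseteq A_1'$. Applying Lemma \ref{7} with $V=R_2$, $U=\{w_2\}$ and $W=\{w_1\}$ is legitimate: there are no edges from $R_2$ to $w_1$ (the leaf sets are private), and $x_{w_2}\le x_{w_1}$ by the monotonicity above. The switch relocates every edge $w_2b$ $(b\in R_2)$ onto $w_1$, which enlarges the first block to $K_{2,r_1+r_2}$, strictly raises the spectral radius, and leaves $w_2$ isolated. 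Deleting the isolated $w_2$ changes neither the size nor the spectral radius, and what remains is $K_{2,r_1+r_2}$ together with the other blocks meeting only at $u^*$; this is minimally $2$-edge-connected of size $m$, contradicting the maximality of $\rho^*$.

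For $q\le 1$ the same idea applies to two book blocks $K_{2,s_1+1},K_{2,s_2+1}$ with $s_1\ge s_2\ge 2$ and hub pairs $\{a_1,b_1\},\{a_2,b_2\}$. Here relocating all leaves off $\{a_2,b_2\}$ would leave those two hubs pendant, so instead I would move only $s_2-1\ (\ge 1)$ of the $B_1$-leaves of the second book, applying Lemma \ref{7} with $V$ equal to these $s_2-1$ leaves, $U=\{a_2,b_2\}$ and $W=\{a_1,b_1\}$; its hypothesis $x_{a_2}+x_{b_2}\le x_{a_1}+x_{b_1}$ again holds by monotonicity. This enlarges the first book to $K_{2,s_1+s_2}$ and leaves behind the $4$-cycle on $\{u^*,a_2,b_2,w_{2,s_2}\}$, so every block is still $K_{2,\cdot}$ or a $C_4$, the size is unchanged, and the spectral radius strictly increases — again a contradiction.

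The step I expect to be most delicate is keeping the modified graph minimally $2$-edge-connected with the same number of edges throughout the relocation, i.e. making sure that no cut edge, degree-one vertex, or chord is created. For the star case this is exactly what forces me to discard (rather than retain) the freed hub $w_2$, and for the book case it is why I keep one leaf so that the residue is a $4$-cycle rather than two pendant vertices. Verifying in each case that the surviving blocks are still $K_{2,\cdot}$ (or a $C_4$) glued only at $u^*$, and invoking Lemma \ref{le-2.6} to confirm no cycle acquires a chord, is where the bookkeeping must be carried out with care.
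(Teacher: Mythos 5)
Your proof is correct and follows essentially the same route as the paper: assume two blocks of the same kind exist, relocate edges from the smaller block onto the larger one via a switching lemma, and verify the result is still a minimally $2$-edge-connected graph of size $m$ with strictly larger spectral radius, contradicting the maximality of $\rho^*$. The only cosmetic differences are that the paper obtains the needed Perron-weight inequalities by a without-loss-of-generality choice rather than your explicit monotonicity formulas, uses Lemma \ref{le-2.9} (instead of Lemma \ref{7}) for the $p\le 1$ case, and, for $q\le 1$, moves a single leaf so that the second book shrinks to $K_{2,s_2}$ rather than to a $C_4$.
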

\begin{proof}
We firstly show $p\le1$. Suppose $p\ge2$, then there exists  two vertices, say $w_1, w_2$ in $B_1$ with $G^*[N_{A_1'}(w_i)\cup\{w_i\}=K_{1,r_i}$ for $i=1,2$.
Without loss of generality, we may assume that $x_{w_1} \ge x_{w_2}$.
Denote by
$$G^{'}=G^*-\sum_{v \in N_{A_1}(w_2)}vw_2+\sum_{v \in N_{A_1}(w_2)}vw_1.$$
By Lemma \ref{le-2.9},  we have $\rho(G^{'})> \rho^*$.
Clearly  $w_2$ is an isolated vertex of $G^{'}$.
Set $G^{''}=G^{'}-\{w_2\}$.
Then $G^{''}$ is also a minimally 2-edge-connected graph
 since $N_{A_1}(w_1)\cup N_{A_1}(w_2)\cup\{w_1, u^*\}$ induces a block $K_{2,r_1+r_2}$ in $G^{''}$.
However $\rho(G^{''})=\rho(G^{'})> \rho^*$, a
contradiction.

Now we will show $q\le1$.
Otherwise, $q\ge2$. Then $G^*[N_{A_1}(B_1)\cup B_1]$ contains $K_{2,s_1}, K_{2,s_2}$ ($s_1, s_2\ge 2$) as induced subgraphs.
Denote by $w_i'\in V(K_{2,s_i})\cap B_1$ for $i=1,2$.
Set $N_{A_1}(w_1')=\{v_1,v_2\}$ and $N_{A_1}(w_2')=\{v_3, v_4\}$.
Let $X$ be the Perron vector of $G^*$ whose entry $x_v$ is labelled by vertex $v$.
By the symmetry, $x_{v_1}=x_{v_2} $ and $x_{v_3}=x_{v_4}$.
Without loss of generality, we may assume that  $x_{v_1} \ge x_{v_3}$.
Then $x_{v_1}+x_{v_2}\ge x_{v_3}+x_{v_4}$.
Let $G^{'''} =G^*-w_2'v_3-w_2'v_4+w_2'v_1+w_2'v_2$, and  $\rho'''=\rho(G^{'''})$.
Clearly, $G^{'''}$ is   minimally 2-edge-connected.
By Lemma \ref{7}, we get $\rho'''>\rho^*$, which contradicts with the maximality of $\rho^*$.
\end{proof}

By comparing Fig. \ref{fig-5} and Fig. \ref{G^*}, we have $t_2=r_1$ and  $t_3=s_1$.
From Claims \ref{claim-4.1}-\ref{claim-4.4}, we know that
$G^*$ has two forms: $F_0(t_1,t_2,t_3)$  or $F_1(t_1',t_2',t_3')$, where
$t_1,t_3, t_1',t_3'\ge 0$, $t_2,t_2'=0$ or $t_2,t_2'\ge 2$ and satisfy
\begin{equation*}m=
\left\{\begin{array}{ll}
3t_1+2t_2& \mbox{ if  $t_3=0$}\\
3t_1+2t_2+2t_3+2& \mbox{ if   $t_3\ge 1$}\\
3t_1'+2t_2'+5& \mbox{ if  $t_3'=0$}\\
3t_1'+2t_2'+2t_3'+2+5& \mbox{ if $t_3'\ge 1$}.
\end{array}\right.
\end{equation*}
Clearly, $t_1\geq1$ since otherwise $m$ is even.
Suppose that $G^*\cong F_1(t_1',t_2',t_3')$,
by Proposition \ref{claim-4.9}, we have $\rho(F_1(t_1',t_2',t_3'))<\rho(F_0(t_1'+1,t_2'+1,t_3'))$,
which contradicts the maximality of $\rho(G^*)$.
Thus
$G^*\cong F_0(t_1,t_2,t_3)$,
where
$t_3\ge 0$, $t_2=0$ or $\ge 2$ and satisfy
\begin{equation*}m=
\left\{\begin{array}{ll}
3t_1+2t_2& \mbox{ if  $t_3=0$}\\
3t_1+2t_2+2t_3+2& \mbox{ if $t_3\ge 1$}.
\end{array}\right.
\end{equation*}
If $t_3=0$, then $t_2=\frac{m-3t_1}{2}$, and thus  $G^*\cong F_0(t_1,t_2,0)=F_0(t_1,\frac{m-3t_1}{2},0))$.
If  $t_3\ge 1$, then $t_2+t_3+1=\frac{m-3t_1}{2}$.
By  Proposition \ref{claim-4.9'},  we have
\begin{equation*}
\rho\left(F_0(t_1,t_2,t_3)\right)
<\rho\left(F_0(t_1,t_2+t_3+1,0)\right)
=\rho\left(F_0(t_1,\frac{m-3t_1}{2},0)\right).
\end{equation*}
By the maximality of $\rho(G^*)$ again, we get $G^*\cong F_0(t_1,\frac{m-3t_1}{2},0)$ for some $t_1\ge 1$ and $\frac{m-3t_1}{2}=0$ or $\ge2$.
At last, we will show $t_1=1$.
\begin{clm}\label{claim-4.8}
$ G^*\cong F_0(1,\frac{m-3 }{2},0)$ for $m\ge11$ and $m\neq 15$, and $G^*\cong F_5$ for $m=15$.
\end{clm}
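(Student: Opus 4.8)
The plan is to finish the argument by comparing the spectral radii inside the one-parameter family to which $G^*$ has already been reduced, namely $F_0(t_1,\tfrac{m-3t_1}{2},0)$ with $t_1\ge 1$ odd and $t_2=\tfrac{m-3t_1}{2}\in\{0\}\cup\{2,3,\dots\}$. First I would pin down the spectral radius exactly. The partition of $V(F_0(t_1,t_2,0))$ into $\{u^*\}$, the $2t_1$ triangle vertices, the second centre $c$ of $K_{2,t_2}$, and the $t_2$ vertices of degree two of $K_{2,t_2}$ is equitable, so by Lemma \ref{le-2.7} the spectral radius is the largest root of the quotient polynomial. Substituting $t_2=\tfrac{m-3t_1}{2}$ (so that $m=3t_1+2t_2$), a direct computation gives
$$\phi_{t_1}(x)=x^4-x^3-(m-t_1)x^2+(m-3t_1)x+t_1(m-3t_1),$$
whose largest root I denote $\rho(t_1)$; in particular $\rho(1)=\rho_2^*(m)$, and when $3\mid m$ the choice $t_2=0$ gives $\phi_{m/3}(x)=x^2\bigl(x^2-x-\tfrac{2m}{3}\bigr)$, recovering $\rho(F_{m/3})$. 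The key algebraic identity is the clean difference
$$\phi_{t_1}(x)-\phi_1(x)=(t_1-1)\bigl(x^2-3x+m-3t_1-3\bigr)=:(t_1-1)\,\psi_{t_1}(x).$$

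Next I would locate $\rho(1)$. Evaluating $\phi_1$ yields $\phi_1(\sqrt{m-3})=-(m-3)<0$ and $\phi_1(\sqrt{m-1})=m-3-2\sqrt{m-1}>0$ with $\phi_1$ increasing at $\sqrt{m-1}$ for $m\ge 9$, hence $\sqrt{m-3}<\rho(1)<\sqrt{m-1}$. Since $\psi_{t_1}$ is an upward parabola with vertex at $x=\tfrac32<\sqrt{m-3}<\rho(1)$, it is increasing on $[\rho(1),\infty)$; thus if $\psi_{t_1}(\rho(1))>0$ then $\psi_{t_1}>0$ on $[\rho(1),\infty)$, so $\phi_{t_1}=\phi_1+(t_1-1)\psi_{t_1}>0$ there (using $t_1\ge 3>1$ and $\phi_1\ge 0$ on $[\rho(1),\infty)$), forcing $\rho(t_1)<\rho(1)$. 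Conversely $\psi_{t_1}(\rho(1))<0$ gives $\phi_{t_1}(\rho(1))<0$, hence $\rho(t_1)>\rho(1)$. Everything therefore reduces to the sign of $\psi_{t_1}(\rho(1))=\rho(1)^2-3\rho(1)+m-3t_1-3$, which decreases in $t_1$, so it suffices to control it at the largest admissible $t_1$ (at most $m/3$, attained with $t_2=0$ exactly when $3\mid m$).

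For $m\ge 19$ I would use the crude bounds $\rho(1)^2>m-3$ and $\rho(1)<\sqrt{m-1}$ to obtain, for every admissible $t_1\le m/3$,
$$\psi_{t_1}(\rho(1))>2m-6-3t_1-3\sqrt{m-1}\ \ge\ (m-6)-3\sqrt{m-1},$$
and $(m-6)-3\sqrt{m-1}>0$ is equivalent to $m^2-21m+45>0$, which holds for all $m\ge 19$. Hence $\rho(t_1)<\rho(1)$ for every admissible $t_1\ge 3$, so $G^*\cong F_0(1,\tfrac{m-3}{2},0)=K_{2,\frac{m-3}{2}}*K_3$ is the unique maximiser. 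The remaining values $m\in\{11,13,15,17\}$ are a finite check: for $m=11$ there is no admissible $t_1\ge3$ (since $t_1=3$ forces $t_2=1$, excluded by $\delta(G^*)=2$), and for $m=13,17$ only $t_1=3$ competes, where $\psi_3(\rho(1))=\rho(1)^2-3\rho(1)+(m-12)>0$ (negative discriminant when $m=17$, and for $m=13$ because $\rho(1)>\sqrt{10}>\tfrac{3+\sqrt5}{2}$), giving $\rho(3)<\rho(1)$.

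Finally, for $m=15$ the admissible parameters are $t_1\in\{1,3,5\}$. Here $\psi_3(x)=x^2-3x+3$ has negative discriminant, so $\rho(3)<\rho(1)$, whereas $\psi_5(\rho(1))=\rho(1)^2-3\rho(1)-3<0$ because $\rho(1)<\sqrt{14}<\tfrac{3+\sqrt{21}}{2}$; therefore $\rho(5)>\rho(1)$ and the maximum is attained by $F_0(5,0,0)=F_5$, whose spectral radius is the largest root of $x^2-x-10$, namely $\tfrac{1+\sqrt{41}}{2}$. The main obstacle is exactly this threshold analysis: the sign of $\rho(1)^2-3\rho(1)-3$ flips between $m=15$ and $m=21$, so isolating $m=15$ as the unique exception demands that the bound $\rho(1)<\sqrt{m-1}$ be sharp enough, together with careful bookkeeping of which pairs $(t_1,t_2)$ are admissible in the small cases.
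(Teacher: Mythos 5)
Your proof is correct, and it runs on the same engine as the paper's own argument: the quotient characteristic polynomial of $F_0(t_1,\frac{m-3t_1}{2},0)$ (your $\phi_{t_1}$ is exactly the paper's $f_1$, your $\phi_1$ its $f_2$) together with the factorization $\phi_{t_1}-\phi_1=(t_1-1)(x^2-3x+m-3t_1-3)$, which is the paper's displayed identity (\ref{last}); incidentally your $\psi_{t_1}$ corrects a typo there, since the paper's $g(x)=x^2-x+m-3(t_1+1)$ should have linear coefficient $-3$. The difference is in the endgame. The paper splits into two cases: for $t_2\ge2$ it shows $g(x)>0$ on $(\sqrt{m-2},\infty)$ uniformly for all $m\ge11$ and all admissible $t_1\ge2$, so $t_1=1$ always wins there; for $t_2=0$ it abandons the polynomial machinery and compares the explicit value $\rho(F_{m/3})=\frac{1+\sqrt{1+8m/3}}{2}$ against $\sqrt{m-2}$, which yields the threshold $m\ge19$ and leaves $m=15$ to a direct check. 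You instead fold the friendship graph into the same family via the identity $\phi_{m/3}(x)=x^2\bigl(x^2-x-\tfrac{2m}{3}\bigr)$ and decide every comparison by the sign of $\psi_{t_1}(\rho(1))$, using the two-sided localization $\sqrt{m-3}<\rho(1)<\sqrt{m-1}$. This buys a single uniform criterion (and makes transparent why $m=15$ is the unique exception: the root $\tfrac{3+\sqrt{21}}{2}$ of $\psi_{m/3}$ sits just above $\sqrt{m-1}$ exactly there), at the cost of needing the upper bound on $\rho(1)$, which the paper's route avoids. One imprecision to repair: to conclude $\rho(1)<\sqrt{m-1}$ you need $\phi_1>0$ on all of $[\sqrt{m-1},\infty)$, and "$\phi_1$ increasing at $\sqrt{m-1}$" does not by itself rule out a later root; the fix is routine, since $\phi_1''(x)=12x^2-6x-2(m-1)>0$ for $x\ge\sqrt{m-1}$ and $\phi_1'(\sqrt{m-1})=2\left((m-1)^{3/2}-m\right)>0$ for $m\ge9$, so $\phi_1'$ (hence $\phi_1$) is positive on that whole ray.
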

\begin{proof}
Denote  by $f(G, x)$  the characteristic polynomial of quotient matrix of  $A(G)$.
Let
$$f_1(x)=f(F_0(t_1,\frac{m-3t_1 }{2},0),x)=x^4-x^3+(t_1-m)x^2+(m-3t_1)x-3{t_1}^2+m t_1,$$
where $t_1\ge 2$, and let
$f_2(x)=f(F_0(1,\frac{m-3}{2},0), x)=x^4-x^3+(1-m)x^2+(m-3)x+m-3.$
Then
\begin{equation}\label{last}
f_1(x)-f_2(x)=(t_1-1)x^2+(3-3t_1)x-3{t_1}^2+3+(t_1-1)m=(t_1-1)g(x),\end{equation}
where $g(x)=x^2-x+m-3(t_1+1)$.

If $\frac{m-3t_1}{2}\ge 2$, then $2\le t_1\le \frac{m-4}{3}$. Note that $m\ge 11$.
One can verify that   $g(x)>0$ for  $x >\sqrt{m-2}$.
From (\ref{last}), we have  $f_1(x)-f_2(x)>0$ for $x >\sqrt{m-2}$.
Notice that $f_2(\sqrt{m-2})=-\sqrt{m-2}-1<0$.
By Lemma \ref{lem-2.6'} (ii),
we have $\rho(F_0(t_1, \frac{m-3t_1 }{2},0))< \rho(F_0(1,\frac{m-3}{2},0))$ for any $t_1\ge 2$. Thus, $G^*\cong F_0(1,\frac{m-3}{2},0)$.

If  $\frac{m-3t_1 }{2}=0$,  then $t_1=\frac{m}{3}$, i.e. $G^*\cong F_0(\frac m3, 0, 0),$
where $3\mid m$ and $m\ge 11$.
 By the computation,
 $$\rho\left(F_0\left(\frac m3, 0, 0\right)\right)= \frac{1+\sqrt{1+\frac{8}{3}m}}{2} < \sqrt{m-2}<\rho\left(F_0\left(1,\frac{m-3}{2},0\right)\right)$$ for $m \ge 19$.
 For $11\le m\le 17$, i.e. $m=15$,  we have $\frac{1+\sqrt{41}}{2}=\rho(F_5)>\rho(F_0(1, 6,0))$.
 By the above arguments, $G^*\cong F_5$ for $m=15$, and
 $G^*\cong F_0(1,\frac{m-3}{2},0)$  for $m\ge 11$ and $m\neq 15$.
\end{proof}

Notice that $F_0(1,\frac{m-3}{2},0)\cong K_{2,\frac{m-3}{2}}*K_3$ and $\rho(F_0(1,\frac{m-3}{2},0))$ is the largest root of $x^4-x^3+(1-m)x^2+(m-3)x+m-3=0$.
It completes the proof of Theorem \ref{thm-1.2} (ii).
\end{proof}
\begin{remark}For odd $m<11$,
by Claims \ref{claim-4.1}-\ref{claim-4.4}, we get that the minimally 2-edge connected graph and the extremal graph
$G^*$ is given by the following Tables.
\begin{table}[h]
\centering
\renewcommand{\arraystretch}{1.1}
\setlength{\tabcolsep}{7mm}
\begin{tabular}{c|c|c|c}
\hline
$m$      &   Minimally 2-edge-connected graph       &   $G^*$   &$\rho(G^*)$   \\ \hline
$3$      &   $C_3$                                  &   $C_3$              &2      \\
$5$      &    $C_5$                                 &   $C_5$              &2      \\
$7$     &   $C_7$, $SK_{2,3}$, $C_3*C_4$               &   $C_3*C_4$        &2.5035    \\
$9$     &  $C_9$, $SK_{2,4}$, $C_3*C_6$, $C_3*K_{2,3}$, $C_4*C_5$, $F_3$  &   $F_3$  &3  \\
\hline
\end{tabular}
\end{table}
\end{remark}


{\small

}

\end{document}